\newcounter{lemma}[section]
\newcounter{corollary}[section]
\newcounter{remark}[section]
\newcounter{theorem}[section]
\newcounter{proposition}[section]
\numberwithin{equation}{section}
\begin{document}

\markboth{\centerline{E. SEVOST'YANOV}} {\centerline{ON LOCAL AND
BOUNDARY BEHAVIOR}}

\def\cc{\setcounter{equation}{0}
\setcounter{figure}{0}\setcounter{table}{0}}

\overfullrule=0pt


\author{{E. SEVOST'YANOV}\\}

\title{
{\bf ON LOCAL AND BOUNDARY BEHAVIOR OF MAPPINGS IN METRIC SPACES}}

\date{\today}
\maketitle

\begin{abstract} Open discrete mappings with a
modulus condition in metric spaces are considered. Some results
related to local behavior of mappings as well as theorems about
continuous extension to a boundary are proved.
\end{abstract}

\bigskip
{\bf 2010 Mathematics Subject Classification: Primary 30L10;
Secondary 30C65}

\section{Introduction}

The paper is devoted to the study of quasiregular mappings and their
natural generalizations investigated long time, see e.g. \cite{AC},
\cite{Cr$_1$}--\cite{Cr$_2$}, \cite{Gol$_1$}--\cite{Gol$_2$},
\cite{GRSY}, \cite{IM}, 
\cite{MRSY}, \cite{MRV$_1$}--\cite{MRV$_3$}, \cite{Re}, \cite{Ri},
\cite{Vu} and further references therein. We also refer to work of
Novosibirsk mathematical school, see \cite{Vo$_1$}--\cite{UV}.

\medskip
As known, boundary and local behavior of quasiregular mappings in
${\Bbb R}^n$ are the main subjects of investigation in many works,
see \cite{Ge}, 
\cite{Na}, \cite{MRV$_2$}, \cite{Ri},
\cite{Va$_1$}, \cite{Va$_2$} etc. It should also be noted a large
number of works by mapping with finite distortion in this context,
see e.g. \cite{Cr$_1$}--\cite{Cr$_2$},
\cite{Gol$_1$}--\cite{Gol$_2$}, \cite{GRSY}, \cite{IM}, \cite{HK},
\cite{MRSY} and \cite{Ra}. Besides that, we refer to works, where
mappings obeying modular inequalities are studied, see
\cite{IR$_1$}--\cite{IR$_2$}, \cite{RS}, \cite{Sm} and
\cite{Sev$_1$}--\cite{Sev$_3$}. Mappings mentioned above are called
$Q$-mappings, and were introduced by O.~Martio together with
V.~Ryazanov, U.~Srebro and E.~Yakubov, see \cite{MRSY}.

\medskip
Now we return to \cite{Sev$_1$}--\cite{Sev$_3$}. Local behavior of
mappings satisfying modular inequalities is studied in
\cite{Sev$_1$}. In particular, we have proved here that a family of
mappings mentioned above is equicontinuous provided that
characteristic of quasiconformality $Q(x)$ has a finite mean
oscillation at the corresponding point. In \cite{Sev$_2$}, we have
proved that sets of zero modulus with weight $Q$ (in particular,
isolated singularities) are removable for discrete open $Q$-mappings
if the function $Q(x)$ has finite mean oscillation or a logarithmic
singularity of order not exceeding $n-1$ on the corresponding set.
The problem of extension of mappings $f:D\rightarrow {\Bbb R}^n$
with modular condition to the boundary of a domain $D$ has been
investigated in \cite{Sev$_3$}. Under certain conditions imposed on
a measurable function $Q(x)$ and the boundaries of the domains $D$
and $D^{\,\prime}=f(D)$ we show that an open discrete mapping
$f:D\rightarrow {\Bbb R}^n$ with quasiconformality characteristic
$Q(x)$ can be extended to the boundary $\partial D$ by continuity.

\medskip
Now we continue studying mappings satisfying modular conditions. In
the present paper we show that some results from
\cite{Sev$_1$}--\cite{Sev$_3$} holds not only in ${\Bbb R}^n,$ but
in metric spaces, also. Here we assume that mapping $f$ is not
injective, as rule, however, $f$ is open and discrete. In addition,
we need require the existence of maximal liftings of curves under
mapping $f.$ Note that the openness and discreteness of $f$ in
${\Bbb R}^n$ implies the existence of maximal liftings of curves
(see \cite[Ch.~3.II]{Ri}).

\medskip
\section{On equicontinuity of homeomorphisms between metric spaces}

\medskip
Let us give some definitions. Recall, for a given continuous path
$\gamma:[a, b]\rightarrow X$ in a metric space $(X, d),$ that its
length is the supremum of the sums
$$
\sum\limits^{k}_{i=1} d(\gamma(t_i),\gamma(t_{i-1}))
$$
over all partitions $a=t_0\leqslant t_1\leqslant\ldots\leqslant
t_k=b$ of the interval $ [a,b].$ The path $\gamma$ is called {\it
rectifiable} if its length is finite.
\medskip

\medskip
Given a family of paths $\Gamma$ in $X$, a Borel function
$\varrho:X\rightarrow[0,\infty]$ is called {\it admissible} for
$\Gamma$, abbr. $\varrho\in {\rm adm}\,\Gamma$, if
\begin{equation}\label{eq13.2}
\int\limits_{\gamma}\varrho\,ds\ \geqslant\ 1\end{equation} for all
(locally rectifiable) $\gamma\in\Gamma$.
\medskip
Everywhere further, for any sets $E, F,$ and $G$ in $X$, we denote
by $\Gamma(E, F, G)$ the family of all continuous curves $\gamma:[0,
1]\rightarrow X$ such that $\gamma(0)\in E,$ $\gamma(1)\in F,$ and
$\gamma(t)\in G$ for all $t\in (0, 1).$ For $x_0\in X$ and $r>0,$
the ball $\{x\in X: d(x, x_0)<r\}$ is denoted by $B(x_0, r).$
Everywhere further $(X, d, \mu)$ and $\left(X^{\,\prime},
d^{\,\prime}, \mu^{\,\prime}\right)$ are metric spaces with metrics
$d$ and $d^{\,\prime}$ and locally finite Borel measures $\mu$ and
$\mu^{\,\prime},$ correspondingly.

\medskip An open set any two points of which can be connected by a
curve is called a domain in $X.$ The modulus of a family of curves
$\Gamma$ in a domain $G$ of finite Hausdorff dimension
$\alpha\geqslant 2$ from $X$ is defined by the equality
\begin{equation}\label{eq13.5}
M_{\,\alpha}(\Gamma)=\inf\limits_{\rho\in {\rm
adm}\,\Gamma}\int\limits_{G}\varrho^{\,\alpha}(x) d\mu(x)\,.
\end{equation} In the case of the path family $\Gamma^{\,\prime}=f(\Gamma)$ we take
the Hausdorff dimension $\alpha^{\,\prime}$ of the domain
$G^{\,\prime}.$

\medskip
A family of paths $\Gamma_1$ in $X$ is said to be {\it minorized} by
a family of paths $\Gamma_2$ in $X$, abbr. $\Gamma_1>\Gamma_2$, if,
for every path $\gamma_1\in\Gamma_1,$ there is a path
$\gamma_2\in\Gamma_2$ such that $\gamma_2$ is a restriction of
$\gamma_1$. In this case
\begin{equation}\label{eq32*A}
\Gamma_1
> \Gamma_2 \quad \Rightarrow \quad M_{\alpha}(\Gamma_1)\le M_{\alpha}(\Gamma_2)
\end{equation} (see \cite[Theorem~1]{Fu}).

\medskip
Let $G$ and $G^{\,\prime}$ be domains with finite Hausdorff
dimensions $\alpha$ and $\alpha^{\,\prime}\geqslant 2$ in spaces
$(X,d,\mu)$ and $(X^{\,\prime},d^{\,\prime}, \mu^{\,\prime}),$ and
let $Q:G\rightarrow[0,\infty]$ be a measurable function. Set $S(x_0,
r_i)=\{x\in X: d(x_0, x)=r_i\},$ $i=1,2,$ $0<r_1<r_2<\infty.$
Following to \cite[Ch.~7]{MRSY}, we say that a mapping
$f:G\rightarrow G^{\,\prime}$ is a ring $Q$-mapping at a point
$x_0\in G$ if the inequality
\begin{equation}\label{eq1}
M_{{\,\alpha}^{\,\prime}}(f(\Gamma(S_1, S_2,
A))\leqslant\int\limits_{A\cap G}Q(x)\eta^{\alpha}(d(x, x_0))d\mu(x)
\end{equation}
holds for any ring
\begin{equation}\label{eq2A}
A=A(x_0, r_1, r_2)=\{x\in X: r_1<d(x, x_0)<r_2\}, \quad 0 < r_1 <
r_2 <\infty\,,
\end{equation}
%
and any measurable function
$\eta:(r_1, r_2)\rightarrow [0, \infty]$ such that
\begin{equation}\label{eq*3!!}
\int\limits_{r_1}^{r_2}\eta(r)dr\geqslant 1\,.
\end{equation}

\medskip
A family $\mathcal F$ of continuous functions $f$ defined on some
metric space $(X, d)$ with values in another metric space $(Y,
d^{\,\prime})$ is called equicontinuous at a point $x_0\in X$ if for
every $\varepsilon > 0$, there exists $\delta > 0$ such that
$d^{\,\prime}(f(x_0), f(x)) < \varepsilon$ for all $f\in \mathcal F$
and all $x$ such that $d(x_0, x) < \delta$. The family is
equicontinuous if it is equicontinuous at each point of $X$. Thus,
by the well-known Ascoli theorem, normality is equivalent to
equicontinuity on compact sets of the mappings in $\mathcal F$.

\medskip Let $G$ be a domain in a space $(X,d,\mu)$. Similarly to
\cite{IR$_1$}, we say that a function $\varphi:G\rightarrow{\Bbb R}$
has {\it finite mean oscillation at a point $x_{0}\in\overline{G}$},
abbr. $\varphi \in FMO(x_{0})$, if
\begin{equation}\label{eq13.4.111} \overline{\lim\limits_{\varepsilon\rightarrow
0}}\,\, \,\frac{1}{\mu(B(x_{0},\varepsilon))}
\int\limits_{B(x_{0},\varepsilon)}|\varphi(x)-\overline{\varphi}_{\varepsilon}|\,\,d\mu(x)<\infty
\end{equation}
where $$\overline{\varphi}_{\varepsilon}
=\frac{1}{\mu(B(x_{0},\varepsilon))}
\int\limits_{B(x_{0},\varepsilon)}\varphi(x)\,\,d\mu(x)$$ is the
mean value of the function $\varphi(x)$ over the set
$$B(x_{0},\varepsilon)=\{x\in G: d(x,x_0)<\varepsilon\}$$ with
respect to the measure $\mu$. Here the condition (\ref{eq13.4.111})
includes the assumption that $\varphi$ is integrable with respect to
the measure $\mu$ over the set $B(x_0,\varepsilon)$ for some
$\varepsilon>0$.

\medskip
Following \cite[section~7.22]{He}, given a real-valued function $u$
in a metric space $X,$ a Borel function $\rho\colon X\rightarrow [0,
\infty]$ is said to be an {\it upper gradient} of a function
$u:X\to{\Bbb R}$ if $|u(x)-u(y)|\leqslant
\int\limits_{\gamma}\rho\,|dx|$ for each rectifiable curve $\gamma$
joining $x$ and $y$ in $X.$ Let $(X, \mu)$ be a metric measure space
and let $1\leqslant p<\infty.$ We say that $X$ admits {\it a $(1;
p)$-Poincare inequality} if there is a constant $C\geqslant 1$ such
that
$$\frac{1}{\mu(B)}\int\limits_{B}|u-u_B|d\mu(x)\leqslant C\cdot({\rm
diam\,}B)\left(\frac{1}{\mu(B)} \int\limits_{B}\rho^p
d\mu(x)\right)^{1/p}$$
for all balls $B$ in $X,$ for all bounded continuous functions $u$
on $B,$ and for all upper gradients $\rho$ of $u.$ Metric measure
spaces where the inequalities
$$\frac{1}{C}R^{n}\leqslant \mu(B(x_0,
R))\leqslant CR^{n}$$
hold for a constant $C\geqslant 1$, every $x_0\in X$  and all
$R<{\rm diam}\,X$, are called {\it Ahlfors $n$-regular.} As known,
Ahlfors $n$-regular spaces have Hausdorff dimension  $\alpha$ (see
e.g. \cite[p.~61--62]{He}). A domain $G$ in a topological space $T$
is called {\it locally connected at a point} $x_0\in\partial G$ if,
for every neighborhood $U$ of the point $x_0,$ there is its
neighborhood $V\subset U$ such that $V\cap G$ is connected (see
\cite[I.6, $\S\,49$]{Ku}).

\medskip
\begin{theorem}\label{theor4*!} {\sl\,Let $G$ be a domain in a locally connected
and a locally compact metric space $(X, d, \mu)$ with a finite
Hausdorff dimension $\alpha\geqslant 2,$ and let $(X^{\,\prime},
d^{\,\prime}, \mu^{\,\prime})$ be an Ahlfors
$\alpha^{\,\prime}$-regular metric space which supports
$(1;\alpha^{\,\prime})$-Poincare inequality. Let $B_R\subset
X^{\,\prime}$ is a fixed ball of a radius $R.$ Denote
$\frak{R}_{x_0, Q, B_R, \delta}(G)$ a family of ring
$Q$-homeomorphisms $f\colon G\rightarrow B_R\setminus K_f$ at
$x_0\in G$ with $\sup\limits_{x, y\in K_f} d^{\,\prime}(x,
y)\geqslant \delta>0,$ where $K_f\subset B_R$ is some continuum.
Then $\frak{R}_{x_0, Q, B_R, \delta}(G)$ is equicontinuous at
$x_0\in G$ whenever $Q\in FMO(x_0).$}
\end{theorem}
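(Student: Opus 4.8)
The plan is to control the oscillation of each $f\in\frak{R}_{x_0,Q,B_R,\delta}(G)$ on small balls $B(x_0,\varepsilon)$ by estimating the modulus of the image of a suitable ring family and exploiting the fact that $K_f$ has definite size $\delta$. Fix $\varepsilon_0>0$ with $\overline{B(x_0,\varepsilon_0)}\subset G$ compact (possible by local compactness), and for $0<\varepsilon<\varepsilon_1<\varepsilon_0$ consider the spherical ring $A=A(x_0,\varepsilon,\varepsilon_1)$ and the curve family $\Gamma(S(x_0,\varepsilon),S(x_0,\varepsilon_1),A)$. Using local connectedness of $X$ at $x_0$ one arranges that the image under $f$ of the ``inner'' ball $\overline{B(x_0,\varepsilon)}\cap G$ is a connected set $F_f$ joining $f(x_0)$ to $\partial B(x_0,\varepsilon)$-level, while the ``outer'' part together with $K_f$ provides a second connected set of diameter at least $\min\{\delta,\,c\}$. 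Then the image family $f(\Gamma(S(x_0,\varepsilon),S(x_0,\varepsilon_1),A))$ minorizes a family connecting these two nondegenerate continua in $X^{\,\prime}$, so by a standard lower bound for the modulus of such a family in an Ahlfors $\alpha^{\,\prime}$-regular space supporting a $(1;\alpha^{\,\prime})$-Poincar\'e inequality (Loewner-type estimate), one gets
$$
M_{\alpha^{\,\prime}}\big(f(\Gamma(S(x_0,\varepsilon),S(x_0,\varepsilon_1),A))\big)\ \geqslant\ \psi\!\left(\frac{d^{\,\prime}(f(x_0),f(x))}{\delta}\right)
$$
for a positive increasing function $\psi$ depending only on the structural constants, where $x$ is the point realizing the oscillation.

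On the other side, the ring $Q$-homeomorphism inequality (\ref{eq1}) gives
$$
M_{\alpha^{\,\prime}}\big(f(\Gamma(S(x_0,\varepsilon),S(x_0,\varepsilon_1),A))\big)\ \leqslant\ \int\limits_{A\cap G}Q(x)\,\eta^{\alpha}(d(x,x_0))\,d\mu(x)
$$
for every admissible $\eta$ with $\int_\varepsilon^{\varepsilon_1}\eta(r)\,dr\geqslant 1$. The heart of the argument is then to choose $\eta$ (for instance $\eta(r)=\dfrac{1}{r\log(1/r)\,\log\log\cdots}$ normalized over $(\varepsilon,\varepsilon_1)$, or the now-standard choice adapted to the FMO hypothesis) so that the right-hand side tends to $0$ as $\varepsilon\to 0$ with $\varepsilon_1$ fixed (or chosen appropriately). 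This is exactly the point where $Q\in FMO(x_0)$ enters: one invokes the known integral estimate --- the metric-space analogue of the lemma that for $\varphi\in FMO(x_0)$ the integral $\int_{\varepsilon<d(x,x_0)<\varepsilon_1}\dfrac{Q(x)\,d\mu(x)}{\big(d(x,x_0)\log(1/d(x,x_0))\big)^{\alpha}}$ grows no faster than a constant times a power of $\log\log(1/\varepsilon)$, which is $o$ of the $\alpha$-th power of the normalizing constant $\big(\int_\varepsilon^{\varepsilon_1}\eta(r)\,dr\big)^{-\alpha}$-type denominator. Combining the two inequalities yields $\psi\big(d^{\,\prime}(f(x_0),f(x))/\delta\big)\to 0$ uniformly in $f$ as $d(x_0,x)\to 0$, and since $\psi$ is positive and increasing this forces $d^{\,\prime}(f(x_0),f(x))\to 0$ uniformly, i.e.\ equicontinuity at $x_0$.

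The main obstacle I expect is the lower modulus bound: in $\mathbb R^n$ one has explicit Loewner/capacity estimates, but here one must extract from Ahlfors regularity plus the $(1;\alpha^{\,\prime})$-Poincar\'e inequality a quantitative lower bound for the $\alpha^{\,\prime}$-modulus of the family joining two continua of controlled diameter inside the ball $B_R$, with the bound depending only on the ratio of their separation to their sizes. This is where the hypotheses on $X^{\,\prime}$ are used essentially, via the characterization of Loewner spaces (Heinonen--Koskela); care is needed because $f(\overline{B(x_0,\varepsilon)}\cap G)$ and $K_f$ lie in $B_R\setminus K_f$'s complement pattern, so one should set up the two continua as $E=\overline{f(B(x_0,\varepsilon)\cap G)}$ and $F$ a continuum inside $K_f$ together with a connecting arc, verifying both have diameter bounded below (the lower bound for $E$ being $d^{\,\prime}(f(x_0),f(x))$ and for $F$ being comparable to $\delta$), and that the relevant image family indeed minorizes $\Gamma(E,F,X^{\,\prime})$ so that (\ref{eq32*A}) applies. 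A secondary technical point is justifying that $f(S(x_0,\varepsilon))$ and $f(S(x_0,\varepsilon_1))$ separate $f(x_0)$ from $K_f$ appropriately, which uses openness of $f$, connectedness of $G$, and the local connectedness of $X$ at interior points.
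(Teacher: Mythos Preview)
Your proposal is correct and follows essentially the same route as the paper. The paper packages the argument as Lemma~\ref{lem1} (the general equicontinuity lemma under the integral condition $F(\varepsilon,\varepsilon_0)=o(I^{\alpha}(\varepsilon,\varepsilon_0))$) combined with Proposition~\ref{pr3} (the FMO-to-integral estimate with $\psi(t)=1/(t\log(1/t))$), and the lower modulus bound you anticipate is precisely Proposition~\ref{pr2} (the Adamowicz--Shanmugalingam Loewner-type estimate $M_{\alpha'}(\Gamma(E,F,X'))\geqslant C^{-1}\min\{\mathrm{diam}\,E,\mathrm{diam}\,F\}/R$); the minorization and the use of local connectedness to produce nested continua $V_k$ around $x_0$ are carried out just as you outline.
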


\medskip
The following lemma can be useful under investigations related to
equicontinuity of families of mappings.

\medskip
\begin{lemma}\label{lem4}{\sl\,
Let $G$ be a domain in a metric space $(X, d, \mu)$ with a finite
Hausdorff dimension $\alpha\geqslant 2,$ and let $(X^{\,\prime},
d^{\,\prime}, \mu^{\,\prime})$ be a metric space with a finite
Hausdorff dimension $\alpha^{\,\prime}\geqslant 2.$ Let $f\colon
G\rightarrow X^{\,\prime}$ be a ring $Q$-mapping at $x_0\in G,$ and
let $\varepsilon_0>0$ be such that $\overline{B(x_0,
\varepsilon_0)}\subset G.$ Assume that
 \begin{equation} \label{eq3.7B}
\int\limits_{\varepsilon<d(x, x_0)<\varepsilon_0}
Q(x)\cdot\psi_{\varepsilon}^{\alpha}(d(x, x_0))\, d\mu(x)\leqslant
F(\varepsilon, \varepsilon_0)\qquad\forall\,\,\varepsilon\in(0,
\varepsilon_0^{\,\prime}),
 \end{equation}
for some $\varepsilon_0^{\,\prime}\in (0, \varepsilon_0)$ and some
family of nonnegative Lebesgue measurable functions
$\{\psi_{\varepsilon}(t)\},$ $\psi_{\varepsilon}\colon (\varepsilon,
\varepsilon_0)\rightarrow [0, \infty],$ $\varepsilon\in\left(0,
\varepsilon_0^{\,\prime}\right),$ where
где $F(\varepsilon, \varepsilon_0)$ is some function, and
\begin{equation}\label{eq3AB} 0<I(\varepsilon, \varepsilon_0):=
\int\limits_{\varepsilon}^{\varepsilon_0}\psi_{\varepsilon}(t)dt <
\infty\qquad\forall\,\,\varepsilon\in(0,
\varepsilon_0^{\,\prime}).\end{equation}
Denote $S_1=S(x_0, \varepsilon),$ $S_2=S(x_0, \varepsilon_0)$ and
$A=\{x\in G: \varepsilon<d(x, x_0)<\varepsilon_0\}.$ Then
\begin{equation}\label{eq3B}
M_{\alpha^{\,\prime}}(f(\Gamma(S_1, S_2, A))\leqslant
F(\varepsilon,\varepsilon_0)/ I^{\,\alpha}(\varepsilon,
\varepsilon_0)\qquad
\forall\,\,\varepsilon\in\left(0,\,\varepsilon_0^{\,\prime}\right)\,.
\end{equation}
}
\end{lemma}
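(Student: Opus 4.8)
\textit{Proof.} The plan is to feed a suitably normalized version of $\psi_\varepsilon$ into the defining inequality (\ref{eq1}) of a ring $Q$-mapping. Fix $\varepsilon\in(0,\varepsilon_0^{\,\prime})$. By (\ref{eq3AB}) the quantity $I(\varepsilon,\varepsilon_0)$ is a finite positive number, so we may define
\begin{equation}\label{eq-eta-def}
\eta(t)=\frac{\psi_\varepsilon(t)}{I(\varepsilon,\varepsilon_0)},\qquad t\in(\varepsilon,\varepsilon_0).
\end{equation}
Since $\psi_\varepsilon\colon(\varepsilon,\varepsilon_0)\rightarrow[0,\infty]$ is Lebesgue measurable and we divide by a constant, $\eta\colon(\varepsilon,\varepsilon_0)\rightarrow[0,\infty]$ is measurable, and by the very definition of $I(\varepsilon,\varepsilon_0)$ we have $\int_\varepsilon^{\varepsilon_0}\eta(t)\,dt=1$, so $\eta$ fulfills the normalization condition (\ref{eq*3!!}) with $r_1=\varepsilon$, $r_2=\varepsilon_0$.

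Next, note that the hypothesis $\overline{B(x_0,\varepsilon_0)}\subset G$ guarantees that the ring $A=A(x_0,\varepsilon,\varepsilon_0)=\{x\in X:\varepsilon<d(x,x_0)<\varepsilon_0\}$ lies entirely in $G$, hence $A\cap G=A$, and the surfaces $S_1=S(x_0,\varepsilon)$, $S_2=S(x_0,\varepsilon_0)$ together with all curves of the family $\Gamma(S_1,S_2,A)$ are contained in $G$. Therefore the definition of a ring $Q$-mapping at $x_0$ is applicable to this ring with the function $\eta$ from (\ref{eq-eta-def}), and (\ref{eq1}) yields
\begin{equation}\label{eq-apply}
M_{\alpha^{\,\prime}}\bigl(f(\Gamma(S_1,S_2,A))\bigr)\ \leqslant\ \int\limits_{A}Q(x)\,\eta^{\alpha}(d(x,x_0))\,d\mu(x)
=\frac{1}{I^{\,\alpha}(\varepsilon,\varepsilon_0)}\int\limits_{A}Q(x)\,\psi_\varepsilon^{\alpha}(d(x,x_0))\,d\mu(x).
\end{equation}
Finally, the integral on the right of (\ref{eq-apply}) is exactly the integral estimated by (\ref{eq3.7B}) (the domain of integration $\{x:\varepsilon<d(x,x_0)<\varepsilon_0\}$ coincides with $A$, which equals $A\cap G$), so it is bounded above by $F(\varepsilon,\varepsilon_0)$. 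Combining this with (\ref{eq-apply}) gives
$$
M_{\alpha^{\,\prime}}\bigl(f(\Gamma(S_1,S_2,A))\bigr)\ \leqslant\ \frac{F(\varepsilon,\varepsilon_0)}{I^{\,\alpha}(\varepsilon,\varepsilon_0)},
$$
which is (\ref{eq3B}). Since $\varepsilon\in(0,\varepsilon_0^{\,\prime})$ was arbitrary, the proof is complete. $\Box$

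\medskip
There is essentially no deep obstacle here: the argument is a direct substitution into the definition of a ring $Q$-mapping, and the only points requiring a word of care are (i) that $I(\varepsilon,\varepsilon_0)$ is a legitimate normalizing constant, which is precisely the content of assumption (\ref{eq3AB}), and (ii) that the chosen ring lies inside $G$, which is ensured by $\overline{B(x_0,\varepsilon_0)}\subset G$. The role of the family $\{\psi_\varepsilon\}$ and of the bound $F(\varepsilon,\varepsilon_0)$ is simply to package, in a form convenient for later applications (e.g. to equicontinuity as in Theorem~\ref{theor4*!}), the flexibility of choosing the admissible function $\eta$ in (\ref{eq*3!!}).
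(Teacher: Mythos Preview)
Your proof is correct and follows exactly the same approach as the paper: define $\eta_\varepsilon(t)=\psi_\varepsilon(t)/I(\varepsilon,\varepsilon_0)$, note that it satisfies the normalization condition~(\ref{eq*3!!}), and apply the defining inequality~(\ref{eq1}) together with~(\ref{eq3.7B}). The paper's proof is in fact a one-line version of yours; your additional remarks about $A\cap G=A$ and the legitimacy of the normalization are accurate elaborations.
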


\begin{proof}
Set
$\eta_{\varepsilon}(t)=\psi_{\varepsilon}(t)/I(\varepsilon,
\varepsilon_0 ),$ $t\in(\varepsilon,\, \varepsilon_0).$ We observe
that
$\int\limits_{\varepsilon}^{\varepsilon_0}\eta_{\varepsilon}(t)\,dt=1$
for $\varepsilon\in (0, \varepsilon_0^{\,\prime}).$
Now, from the definition of ring $Q$-mapping at $x_0,$ and from
(\ref{eq3.7B}), we obtain (\ref{eq3B}).~$\Box$
 \end{proof}

\medskip
The following statement holds (see~\cite[Proposition~4.7]{AS}).

\medskip
\begin{proposition}\label{pr2}
{\sl Let $X$ be a $\alpha$-Ahlfors regular metric measure space that
supports $(1; \alpha)$-Poincare inequality for some $\alpha> 1.$ Let
$E$ and $F$ be continua contained in a ball $B(x_0, R).$ Then
$$M_{\alpha}(\Gamma(E, F, X))\geqslant \frac{1}{C}\cdot\frac{\min\{{\rm diam}\,E, {\rm diam}\,F\}}{R}$$
for some constant $C>0.$}
\end{proposition}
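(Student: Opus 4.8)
The plan is to deduce the modulus lower bound from a Loewner-type capacity estimate for the condenser $(E,F)$, exploiting that an Ahlfors $\alpha$-regular space supporting a $(1;\alpha)$-Poincare inequality is an $\alpha$-Loewner space (see \cite{He}). First I would pass from curve families to functions. Given any $\varrho\in {\rm adm}\,\Gamma(E,F,X)$, set
$$
u(x)=\min\Bigl\{1,\ \inf_{\gamma}\int_{\gamma}\varrho\,ds\Bigr\},
$$
the infimum being taken over all rectifiable curves $\gamma$ joining $E$ to $x$. Then $0\leqslant u\leqslant 1$, $u\equiv 0$ on $E$, and $u\equiv 1$ on $F$, since every curve from a point of $E$ to a point of $F$ belongs to $\Gamma(E,F,X)$ and hence has $\varrho$-integral $\geqslant 1$; moreover $\varrho$ is an upper gradient of $u$ by the standard concatenation (line-integral) argument, a property preserved under truncation. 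Consequently $\int_X\varrho^{\alpha}\,d\mu$ is bounded below by the $\alpha$-capacity of the condenser $(E,F)$, and it suffices to bound this capacity below by $\frac1C\,\frac{\min\{{\rm diam}\,E,{\rm diam}\,F\}}{R}$.

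The heart of the argument is the energy estimate for such $u$. Assume without loss of generality ${\rm diam}\,E\leqslant{\rm diam}\,F$, fix $a\in E$, and note $E,F\subset B(x_0,R)\subset B(a,2R)$. Using Ahlfors regularity ($\mu(B(y,r))\asymp r^{\alpha}$) together with the $(1;\alpha)$-Poincare inequality, I would run a telescoping (chaining) estimate over a chain of concentric balls $B(a,r)$ with $r$ ranging dyadically from $\sim{\rm diam}\,E$ up to $\sim R$, controlling the oscillation of the averages $u_{B(a,r)}$ by the local energies $\int_{B(a,r)}\varrho^{\alpha}\,d\mu$; this is exactly the mechanism through which such spaces are shown to be $\alpha$-Loewner. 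The essential geometric input is that both $E$ and $F$, being continua of diameter at least $\min\{{\rm diam}\,E,{\rm diam}\,F\}$, meet every sphere $S(\cdot,\rho)$ about their points for all $\rho$ up to their diameter, so $u$ takes the values $0$ and $1$ at all scales near $E$ and near $F$ respectively; this prevents the averages of $u$ from collapsing and forces $\int_X\varrho^{\alpha}\,d\mu$ to be bounded below.

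Concretely, the telescoping yields the Loewner inequality $M_{\alpha}(\Gamma(E,F,X))\geqslant \phi_\alpha(\Delta(E,F))$, where $\Delta(E,F)={\rm dist}(E,F)/\min\{{\rm diam}\,E,{\rm diam}\,F\}$ is the relative distance and $\phi_\alpha$ is the positive decreasing Loewner function of $X$. Since $E,F\subset B(x_0,R)$ we have ${\rm dist}(E,F)\leqslant 2R$, hence $\Delta(E,F)\leqslant 2R/\min\{{\rm diam}\,E,{\rm diam}\,F\}$, and also $\min\{{\rm diam}\,E,{\rm diam}\,F\}\leqslant 2R$. Monotonicity of $\phi_\alpha$, its uniform positivity on bounded arguments (which settles the case $\Delta(E,F)\leqslant\Delta_0$), and its lower asymptotics at infinity (poly-logarithmic decay, which dominates $t\mapsto c/t$, settling the case $\Delta(E,F)>\Delta_0$) then combine to give $\phi_\alpha(\Delta(E,F))\geqslant \frac1C\,\frac{\min\{{\rm diam}\,E,{\rm diam}\,F\}}{R}$, which is the claim.

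The step I expect to be the main obstacle is the energy estimate itself, because the continua $E$ and $F$ may be $\mu$-null, so the pointwise separation $u\equiv 0$ on $E$, $u\equiv 1$ on $F$ carries no direct measure-theoretic weight; a single application of the Poincare inequality on a ball containing both sets is therefore insufficient. Overcoming this is precisely where the positive diameters of the continua and the Poincare inequality must be combined, the telescoping converting the scale-by-scale presence of $E$ and $F$ into a genuine lower bound on $\int_X\varrho^{\alpha}\,d\mu$. Extracting the clean factor $\min\{{\rm diam}\,E,{\rm diam}\,F\}/R$ without a spurious logarithmic loss is the delicate quantitative point, and is exactly where the explicit lower asymptotics of the Loewner function are needed.
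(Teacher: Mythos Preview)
The paper does not supply its own proof of this proposition; it is quoted as \cite[Proposition~4.7]{AS} and used as a black box. So there is no argument in the paper to compare your attempt against.

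That said, your sketch is a sound independent route. The reduction from modulus to a capacity-type quantity via $u(x)=\min\{1,\inf_\gamma\int_\gamma\varrho\,ds\}$ is standard and correct (with $\varrho$ an upper gradient of $u$), the Heinonen--Koskela theorem that Ahlfors $\alpha$-regularity together with a $(1;\alpha)$-Poincar\'e inequality yields the $\alpha$-Loewner property is exactly the right input, and your final step---bounding $\phi_\alpha(\Delta(E,F))$ below by a constant times $\min\{{\rm diam}\,E,{\rm diam}\,F\}/R$ using $\Delta(E,F)\leqslant 2R/\min\{{\rm diam}\,E,{\rm diam}\,F\}$, monotonicity of $\phi_\alpha$, its positivity on bounded arguments, and the lower asymptotic $\phi_\alpha(t)\gtrsim(\log t)^{1-\alpha}\geqslant c/t$ for large $t$---is valid. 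The one point to flag is that this last quantitative asymptotic of $\phi_\alpha$ is itself a consequence of the Poincar\'e inequality and Ahlfors regularity (not of the bare Loewner condition), proved by the same telescoping/chain argument you describe earlier; so your outline effectively invokes that machinery twice. The direct argument in \cite{AS} runs the chaining once and reads off the linear factor $\min\{{\rm diam}\,E,{\rm diam}\,F\}/R$ immediately, which is cleaner but not essentially different in content.
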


\medskip
The following lemma provides the main tool for establishing
equicontinuity in the most general situation.

\medskip
\begin{lemma}\label{lem1}{\sl\,
Let $G$ be a domain in a locally connected and locally compact
metric space $(X, d, \mu)$ with a finite Hausdorff dimension
$\alpha\geqslant 2,$ and let $(X^{\,\prime}, d^{\,\prime},
\mu^{\,\prime})$ be an Ahlfors $\alpha^{\,\prime}$-regular metric
space which supports $(1;\alpha^{\,\prime})$-Poincare inequality.
Let $r_0>0$ be such that $\overline{B(x_0, \varepsilon_0)}\subset G$
and $0<\varepsilon_0<r_0.$ Assume that, \eqref{eq3.7B} holds for
some $\varepsilon_0^{\,\prime}\in (0, \varepsilon_0),$ and for some
family of nonnegative Lebesgue measurable function
$\{\psi_{\varepsilon}(t)\},$ $\psi_{\varepsilon}\colon (\varepsilon,
\varepsilon_0)\rightarrow [0, \infty],$ $\varepsilon\in\left(0,
\varepsilon_0^{\,\prime}\right),$ where $F(\varepsilon,
\varepsilon_0)$ is some function for which $F(\varepsilon,
\varepsilon_0)=o(I^{\alpha}(\varepsilon, \varepsilon_0)),$ and
$I(\varepsilon, \varepsilon_0)$ is defined in \eqref{eq3AB}.

Let $B_R\subset X^{\,\prime}$ be a fixed ball of a radius $R.$
Denote $\frak{R}_{x_0, Q, B_R, \delta}(G)$ a family of all ring
$Q$-homeomorphisms $f\colon G\rightarrow B_R\setminus K_f$ at
$x_0\in G$ with $\sup\limits_{x, y\in K_f} d^{\,\prime}(x,
y)\geqslant \delta>0,$ where $K_f\subset B_R$ is a fixed continuum.
Now, $\frak{R}_{Q, x_0, B_R, \delta}(G)$ is equicontinuous at $x_0.$
}
 \end{lemma}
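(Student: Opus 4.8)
The plan is to argue by contradiction, combining the capacity lower bound of Proposition~\ref{pr2}, the capacity upper bound of Lemma~\ref{lem4}, and the hypothesis $F(\varepsilon,\varepsilon_0)=o(I^{\alpha}(\varepsilon,\varepsilon_0))$. Suppose $\frak{R}_{x_0,Q,B_R,\delta}(G)$ is \emph{not} equicontinuous at $x_0$. Then there exist $\sigma>0$, a sequence $f_m\in\frak{R}_{x_0,Q,B_R,\delta}(G)$, and points $x_m\to x_0$ with $d^{\,\prime}(f_m(x_m),f_m(x_0))\geqslant\sigma$ for all $m$. Writing $\varepsilon_m:=d(x_m,x_0)\to 0$, I may assume $\varepsilon_m<\varepsilon_0^{\,\prime}$ for all $m$ (discard finitely many terms). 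The idea is that the image continuum $K_{f_m}$ has diameter $\geqslant\delta$, while the image of a ``small'' sphere $S(x_0,\varepsilon_0)$ stays a definite distance from $f_m(x_0)$; meanwhile $f_m(x_m)$ is forced to lie far from $f_m(x_0)$, so the image of the spherical ring $A(x_0,\varepsilon_m,\varepsilon_0)$ separates, inside $B_R$, two continua of diameter bounded below — and its modulus is therefore bounded below by Proposition~\ref{pr2}. But Lemma~\ref{lem4} bounds that same modulus above by $F(\varepsilon_m,\varepsilon_0)/I^{\alpha}(\varepsilon_m,\varepsilon_0)\to 0$, a contradiction.

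More precisely, I would proceed in the following steps. First, fix $\varepsilon_0$ with $\overline{B(x_0,\varepsilon_0)}\subset G$ as in the hypothesis; since $X$ is locally compact, $\overline{B(x_0,\varepsilon_0)}$ is compact, and since $X$ is locally connected one can ensure the relevant spheres/rings are non-degenerate and that $\Gamma(S_1,S_2,A)$ is a genuine family of curves connecting the two boundary spheres through the ring. Second, set $S_1=S(x_0,\varepsilon_m)$, $S_2=S(x_0,\varepsilon_0)$, $A=A(x_0,\varepsilon_m,\varepsilon_0)$, and let $\Gamma_m:=\Gamma(S_1,S_2,A)$. Apply Lemma~\ref{lem4} to get
\begin{equation}\label{eq-upper}
M_{\alpha^{\,\prime}}(f_m(\Gamma_m))\leqslant F(\varepsilon_m,\varepsilon_0)/I^{\alpha}(\varepsilon_m,\varepsilon_0)\,.
\end{equation}
Third, identify $f_m(\Gamma_m)$ — up to minorization — with a family $\Gamma(E_m,F_m,B_R)$ for suitable continua $E_m,F_m\subset B_R$: take $E_m$ to be (a continuum inside) $f_m(\overline{B(x_0,\varepsilon_m)})$ containing both $f_m(x_0)$ and $f_m(x_m)$, so $\mathrm{diam}\,E_m\geqslant\sigma$; take $F_m$ to be $f_m(S(x_0,\varepsilon_0))\cup K_{f_m}$ — more carefully, a connected set joining the image of the outer sphere to the omitted continuum $K_{f_m}$, whose diameter is $\geqslant\delta$ because it contains $K_{f_m}$. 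Since $f_m$ is a homeomorphism of $G$ onto $B_R\setminus K_{f_m}$, every curve in $B_R$ joining $E_m$ to $F_m$ must cross $f_m(A)$, i.e. $\Gamma(E_m,F_m,B_R)>f_m(\Gamma_m)$ (or is minorized in the appropriate direction), so by \eqref{eq32*A}
\begin{equation}\label{eq-minor}
M_{\alpha^{\,\prime}}(\Gamma(E_m,F_m,B_R))\leqslant M_{\alpha^{\,\prime}}(f_m(\Gamma_m))\,.
\end{equation}
Fourth, apply Proposition~\ref{pr2} with $E=E_m$, $F=F_m$, $B(x_0,R)=B_R$ to get
$$M_{\alpha^{\,\prime}}(\Gamma(E_m,F_m,B_R))\geqslant \frac{1}{C}\cdot\frac{\min\{\sigma,\delta\}}{R}=:c_0>0\,.$$
Combining with \eqref{eq-minor} and \eqref{eq-upper} yields $c_0\leqslant F(\varepsilon_m,\varepsilon_0)/I^{\alpha}(\varepsilon_m,\varepsilon_0)$ for all $m$; letting $m\to\infty$ and using $F(\varepsilon,\varepsilon_0)=o(I^{\alpha}(\varepsilon,\varepsilon_0))$ gives $c_0\leqslant 0$, a contradiction. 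Hence $\frak{R}_{x_0,Q,B_R,\delta}(G)$ is equicontinuous at $x_0$.

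The main obstacle is Step three: making precise the topological separation argument in a general metric space. One must check that the ring $A$ and its bounding spheres behave as expected — that $f_m(A)$ genuinely separates $E_m$ from $F_m$ in $B_R$, that the relevant curve families connect what they should, and that the ``outer'' set $F_m$ can be taken to be a continuum of diameter $\geqslant\delta$ meeting the image of $S(x_0,\varepsilon_0)$. This uses local connectedness and local compactness of $X$ (so that spheres bound and rings are well-behaved) and the fact that $f_m$ is an open map onto $B_R\setminus K_{f_m}$ with $K_{f_m}$ a continuum in $B_R$; care is also needed because $f_m$ need not extend continuously to $\overline{B(x_0,\varepsilon_0)}$, so one works with images of closed subsets of $G$ only. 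Once the separation is set up correctly, the modulus inequalities \eqref{eq32*A}, Proposition~\ref{pr2} and Lemma~\ref{lem4} assemble the contradiction mechanically.
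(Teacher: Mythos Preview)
Your approach is essentially the paper's: combine the lower bound of Proposition~\ref{pr2} with the upper bound of Lemma~\ref{lem4} via the minorization property~\eqref{eq32*A}; the paper argues directly (showing $\mathrm{diam}\,f(V_k)\to 0$ for a shrinking family of continua $V_k\ni x_0$) rather than by contradiction, but the substance is identical. One simplification the paper makes that dissolves your ``main obstacle'' in Step three: for the two continua in Proposition~\ref{pr2} it takes $K_f$ itself (already a continuum of diameter $\geqslant\delta$) and $f(V_k)$, where $V_k$ is a compact connected neighborhood of $x_0$ produced from local connectedness and local compactness of $X$ --- there is no need to build your $F_m$ by joining the image of the outer sphere to $K_{f_m}$, nor to assume $f_m$ is onto $B_R\setminus K_{f_m}$. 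The separation then reduces to checking that every curve in $\Gamma(K_f,f(V_k),X^{\,\prime})$ has a subcurve in $f(\Gamma(S(x_0,\varepsilon_{k-1}),S(x_0,\varepsilon_0),A))$, which the paper verifies by pulling back through $f^{-1}$ and using that the curve must cross both $f(S(x_0,\varepsilon_0))$ and $f(S(x_0,\varepsilon_{k-1}))$.
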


\medskip
\begin{proof}
Fix $x_0\in G,$ $f\in\frak{R}_{x_0, Q, B_R, \delta}(G).$ Since $X$
is locally connected and locally compact space, we can find a
sequence $B(x_0, \varepsilon_k),$ $k=0,1,2,\ldots,$
$\varepsilon_k\rightarrow 0$ as $k\rightarrow\infty,$ such that
$V_{k+1}\subset \overline{B(x_0, \varepsilon_k)}\subset V_k,$ where
$V_k$ are continua in $G.$ Observe that $f(V_k)$ are $K_f$ continua
in $B_R,$ in fact, $f(V_k)$ is a continuum as continuous image of a
continuum (see e.g. \cite[Theorem~1.III.41 and Theorem 3.I.46]{Ku}).
Now, by Proposition \ref{pr2} we obtain that
\begin{equation}\label{eq2}
M_{\alpha^{\,\prime}}(K_f, f(V_k), X^{\,\prime}))\geqslant
\frac{1}{C}\cdot\frac{\min\{{\rm diam}\,K_f, {\rm diam}\,
f(V_k)\}}{R}
 \end{equation}
at some $C>0.$ Note that $\gamma\in \Gamma(K_f, f(V_k),
X^{\,\prime})$ does not fully belong to $f(B(x_0, \varepsilon_0))$
as well as $X^{\,\prime}\setminus f(B(x_0, \varepsilon_0)),$ so
there exists $y_1\in |\gamma|\cap f(S(x_0, \varepsilon_0))$ (see
\cite[Theorem~1, $\S\,46,$ section.~I]{Ku}). Let $\gamma:[0,
1]\rightarrow X^{\,\prime}$ and $t_1\in (0, 1)$ be such that
$\gamma(t_1)=y_1.$ Without loss of generalization, we can consider
that $|\gamma|_{[0, t_1)}|\in f(B(x_0, \varepsilon_0)).$ Denote
$\gamma_1:=\gamma|_{[0, t_1)},$ and set
$\alpha_1=f^{\,-1}(\gamma_1).$ Observe that $|\alpha_1|\in B(x_0,
\varepsilon_0).$ Moreover, note that $\alpha_1$ does not wholly
belong to $\overline{B(x_0, \varepsilon_{k-1}}$ as well as to
$X\setminus\overline{B(x_0, \varepsilon_{k-1})}.$ Thus, there exists
$t_2\in (0, t_1)$ with $\alpha_1(t_2)\in S(x_0, \varepsilon_{k-1})$
(see \cite[Theorem~1, $\S\,46,$ Section.~I]{Ku}). Without loss of
generality, we can consider that $|\alpha_{[t_2, t_1]}|\in
X\setminus\overline{B(x_0, \varepsilon_{k-1})}.$ Set
$\alpha_2=\alpha_1|_{[t_2, t_1]}.$ Observe that
$\gamma_2:=f(\alpha_2)$ is a subcurve of $\gamma.$ From saying
above,
$$\Gamma(K_f, f(V_k),
X^{\,\prime})>\Gamma(f(S(x_0, \varepsilon_{k-1})), f(S(x_0,
\varepsilon_0)), f(A)))\,,$$
where $A=\{x\in X: \varepsilon_{k-1}<d(x, x_0)<\varepsilon_0\},$
whence by (\ref{eq32*A})
\begin{equation}\label{eq3}
M_{\alpha^{\,\prime}}(\Gamma(K_f, f(V_k), X^{\,\prime}))\leqslant
M_{\alpha^{\,\prime}}(\Gamma(f(S(x_0, \varepsilon_{k-1})), f(S(x_0,
\varepsilon_0)), f(A)))\,.
\end{equation}
By (\ref{eq2}) and (\ref{eq3}), we conclude that
\begin{equation}\label{eq4}
M_{\alpha^{\,\prime}}(\Gamma(f(S(x_0, \varepsilon_{k-1})), f(S(x_0,
\varepsilon_0)), f(A)))\geqslant \frac{1}{C}\cdot\frac{\min\{{\rm
diam}\,K_f, {\rm diam}\, f(V_k)\}}{R}\,.
\end{equation}
From other hand, by Lemma \ref{lem4} and by $F(\varepsilon,
\varepsilon_0)=o(I^{\alpha}(\varepsilon, \varepsilon_0)),$ it
follows that
$$M_{\alpha^{\,\prime}}(\Gamma(f(S(x_0, \varepsilon_{k-1})), f(S(x_0, \varepsilon_0)),
f(A)))\rightarrow 0$$
as $k\rightarrow \infty.$ Therefore, for every $\sigma>0$ there
exists $k_0\in {\Bbb N}=k_0(\sigma)$ such that
$$M_{\alpha^{\,\prime}}(\Gamma(f(S(x_0, \varepsilon_{k-1})), f(S(x_0, \varepsilon_0)),
f(A)))<\sigma$$
for every $k\geqslant k_0.$ Now, by (\ref{eq4}), it follows that
\begin{equation}\label{eq5}
\min\{{\rm diam}\,K_f, {\rm diam}\, f(V_k)\}< \sigma
\end{equation}
for $k\geqslant k_0.$ Since ${\rm diam}\,K_f\geqslant \delta>0$ for
every $f$, we obtain that
$$\min\{{\rm diam}\,K_f, {\rm diam}\, f(V_k)\}={\rm diam}\, f(V_k)$$
for every $k\geqslant k_1(\sigma).$ Now, by (\ref{eq5})
\begin{equation}\label{eq1C}
{\rm diam}\, f(V_k)<\sigma
\end{equation}
for every $k\geqslant k_1(\sigma).$ Since $V_{k+1}\subset
\overline{B(x_0, \varepsilon_k)}\subset V_k,$ the inequality
(\ref{eq1C}) holds in $\overline{B(x_0, \varepsilon_k)}$ as
$k\geqslant k_1(\sigma).$ Set
$\varepsilon(\sigma):=\varepsilon_{k_1}.$ Finally, given $\sigma>0$
there exists $\varepsilon(\sigma)>0$ such that $d^{\,\prime}(f(x),
f(x_0))<\sigma$ as $d(x, x_0)<\varepsilon(\sigma).$ So,
$\frak{R}_{Q, x_0, B_R, \delta}(G)$ is equicontinuous at
$x_0.$~$\Box$
\end{proof}

\medskip
The following statement can be found in \cite[Lemma~4.1]{RS}.

\medskip
\begin{proposition}\label{pr3}
{\sl Let $G$ be a domain Ahlfors $\alpha$-regular metric space $(X,
d, \mu)$ at $\alpha\geqslant 2.$ Assume that $x_0\in \overline{G}$
and $Q:G\rightarrow [0, \infty]$ belongs to $FMO(x_0).$ If
\begin{equation}\label{eq7}
\mu(G\cap B(x_0, 2r))\leqslant
\gamma\cdot\log^{\alpha-2}\frac{1}{r}\cdot \mu(G\cap B(x_0, r))
\end{equation}
for some $r_0>0$ and every $r\in (0, r_0),$ then $Q$
satisfies~\eqref{eq3.7B}{\em,} where $G(\varepsilon):=F(\varepsilon,
\varepsilon_0)/I^n(\varepsilon, \varepsilon_0)$ obeying \/{\em:}
$G(\varepsilon)\rightarrow 0$ as $\varepsilon\rightarrow 0,$ and
$\psi_{\varepsilon}(t)\equiv\psi(t):=\frac{1}{t\log\frac{1}{t}}.$}
\end{proposition}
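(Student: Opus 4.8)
The plan is to estimate the integral in \eqref{eq3.7B} with the specific weight $\psi(t)=\frac{1}{t\log(1/t)}$ by a dyadic decomposition of the spherical ring $\{\varepsilon<d(x,x_0)<\varepsilon_0\}$ into annuli $A_j=\{x\in G: 2^{-(j+1)}\varepsilon_0\le d(x,x_0)<2^{-j}\varepsilon_0\}$ (or balls $B(x_0,2^{-j}\varepsilon_0)$), and to control $\int_{A_j}Q(x)\psi^{\alpha}(d(x,x_0))\,d\mu(x)$ using the $FMO$ hypothesis together with the measure growth condition \eqref{eq7}. On each $A_j$ the factor $\psi^{\alpha}(d(x,x_0))=\bigl(d(x,x_0)\log(1/d(x,x_0))\bigr)^{-\alpha}$ is comparable, up to absolute constants, to $\bigl(2^{-j}\varepsilon_0\cdot j\bigr)^{-\alpha}$ (for $j$ large, $\log(1/(2^{-j}\varepsilon_0))\asymp j$), so the annular integral is bounded by a constant times $(2^{j}/(\,j\,\varepsilon_0))^{\alpha}\int_{B(x_0,2^{-j}\varepsilon_0)}Q\,d\mu$. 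Then I would split $Q=(Q-\overline{Q}_{j})+\overline{Q}_{j}$ where $\overline{Q}_{j}$ is the mean of $Q$ over $B(x_0,2^{-j}\varepsilon_0)$: the first piece is handled by the definition of $FMO(x_0)$, which gives $\int_{B(x_0,2^{-j}\varepsilon_0)}|Q-\overline{Q}_j|\,d\mu\le C\,\mu(B(x_0,2^{-j}\varepsilon_0))$ for small radii, and the second piece by the standard fact that $FMO$ implies $\overline{Q}_j=O(\log(1/(2^{-j}\varepsilon_0)))=O(j)$ (iterating the doubling-type estimate $|\overline{Q}_{j+1}-\overline{Q}_{j}|\le C$ using \eqref{eq7} to pass between consecutive balls). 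Using Ahlfors $\alpha$-regularity, $\mu(B(x_0,2^{-j}\varepsilon_0))\asymp(2^{-j}\varepsilon_0)^{\alpha}$, so each annular term is bounded by a constant multiple of $(2^{j}/(j\,\varepsilon_0))^{\alpha}\cdot j\cdot(2^{-j}\varepsilon_0)^{\alpha}=C\,j^{\,1-\alpha}$.

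Summing over the relevant range of $j$, namely $j$ from roughly $\log_2(\varepsilon_0/\varepsilon)$ down, i.e. over $j\le N(\varepsilon):=\log_2(\varepsilon_0/\varepsilon)$, gives
\begin{equation}\label{eq:sumbound}
\int\limits_{\varepsilon<d(x,x_0)<\varepsilon_0}Q(x)\psi^{\alpha}(d(x,x_0))\,d\mu(x)\ \le\ C\sum_{j=j_0}^{N(\varepsilon)} j^{\,1-\alpha}\ =:\ F(\varepsilon,\varepsilon_0).
\end{equation}
At the same time $I(\varepsilon,\varepsilon_0)=\int_{\varepsilon}^{\varepsilon_0}\frac{dt}{t\log(1/t)}=\log\log(1/\varepsilon)-\log\log(1/\varepsilon_0)$, which behaves like $\log N(\varepsilon)$ as $\varepsilon\to 0$, so $I^{\alpha}(\varepsilon,\varepsilon_0)\asymp(\log N(\varepsilon))^{\alpha}$. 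The point is then a comparison of growth rates: for $\alpha\ge 2$ the series $\sum_{j\ge j_0}j^{1-\alpha}$ is either convergent ($\alpha>2$) or behaves like $\log N(\varepsilon)$ ($\alpha=2$), and in both cases $F(\varepsilon,\varepsilon_0)=o\bigl((\log N(\varepsilon))^{\alpha}\bigr)=o\bigl(I^{\alpha}(\varepsilon,\varepsilon_0)\bigr)$ as $\varepsilon\to 0$. Hence $G(\varepsilon)=F(\varepsilon,\varepsilon_0)/I^{\alpha}(\varepsilon,\varepsilon_0)\to 0$, which is exactly the assertion.

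The main obstacle is the careful control of the mean values $\overline{Q}_j$: one must show that the $FMO$ condition, combined with the logarithmic measure-growth hypothesis \eqref{eq7} (which is what allows comparing the mean over $B(x_0,2r)$ with that over $B(x_0,r)$ at the cost of a factor $\log^{\alpha-2}(1/r)$), yields a bound on $\overline{Q}_j$ that grows no faster than a power of $j$ small enough to keep the annular sum \eqref{eq:sumbound} of lower order than $I^{\alpha}$. This is precisely the metric-space analogue of the classical $\mathbb{R}^n$ estimate (as in \cite{IR$_1$}, \cite{MRSY}), and the computation must be done with the weight $\psi(t)=\frac{1}{t\log(1/t)}$ replacing $\frac{1}{t\log(1/t)}$ used there; the Ahlfors regularity of $X$ is what makes the volume bookkeeping go through. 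A secondary technical point is to verify \eqref{eq3AB}, i.e. that $I(\varepsilon,\varepsilon_0)$ is finite and positive for all small $\varepsilon$, which is immediate from the explicit antiderivative $\log\log(1/t)$ of $\psi$ once $\varepsilon_0<1$ (shrink $\varepsilon_0$, hence $r_0$, if necessary so that $\log(1/t)>0$ on $(0,\varepsilon_0)$). Once these estimates are in place, the conclusion follows by assembling \eqref{eq:sumbound} with the asymptotics of $I(\varepsilon,\varepsilon_0)$.
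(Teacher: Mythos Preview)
The paper does not prove this proposition at all: it is stated as a quotation of \cite[Lemma~4.1]{RS} and no argument is supplied. So there is no ``paper's own proof'' to compare your proposal against; your sketch is in fact a reconstruction of the argument behind the cited result.

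Your strategy --- dyadic decomposition of the ring into annuli $A_j$, estimating $\psi^{\alpha}$ on each $A_j$, splitting $Q=(Q-\overline{Q}_j)+\overline{Q}_j$, and controlling the growth of the means $\overline{Q}_j$ via $FMO$ together with \eqref{eq7} --- is the standard one and is essentially what \cite{RS} (and, in the Euclidean case, \cite{IR$_1$}, \cite{MRSY}) do. One technical slip worth noting: your claim that $|\overline{Q}_{j+1}-\overline{Q}_j|\le C$, hence $\overline{Q}_j=O(j)$, is correct only when the measure is genuinely doubling on the balls $G\cap B(x_0,r)$. Under hypothesis \eqref{eq7} the comparison of consecutive means carries the extra factor $\log^{\alpha-2}(1/r)\asymp j^{\alpha-2}$, so in general one only gets $|\overline{Q}_{j+1}-\overline{Q}_j|\lesssim j^{\alpha-2}$ and therefore $\overline{Q}_j=O(j^{\alpha-1})$. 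With this correction the $j$-th annular term is bounded by a constant multiple of $j^{-\alpha}\cdot j^{\alpha-1}=j^{-1}$ rather than $j^{1-\alpha}$, and the partial sum $\sum_{j\le N(\varepsilon)}j^{-1}\asymp\log N(\varepsilon)$ is still $o\bigl((\log N(\varepsilon))^{\alpha}\bigr)=o\bigl(I^{\alpha}(\varepsilon,\varepsilon_0)\bigr)$ for every $\alpha\ge 2$. You evidently anticipate this in your final paragraph, where you correctly identify the role of the $\log^{\alpha-2}(1/r)$ factor; just make sure the body of the argument reflects it.
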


\medskip
{\it Proof of the Theorem~{\em\ref{theor4*!}} follows from
Lemma~{\em\ref{lem1}} and Proposition~{\em\ref{pr3}}}.~$\Box$

\medskip
Taking into account \cite[Corollary~4.1]{RS}, by Lemma~\ref{lem1},
we obtain the following.

\medskip
\begin{corollary}\label{cor1}
{\sl A conclusion of Theorem~{\em\ref{theor4*!}} holds, if instead
of condition $Q\in FMO(x_0)$ we require that
$$\limsup\limits_{\varepsilon\rightarrow 0}\frac{1}{\mu(B(x_0, \varepsilon))}\int\limits
_{B(x_0, \varepsilon)}Q(x)d\mu(x)<\infty\,.$$ }
\end{corollary}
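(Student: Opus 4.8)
The plan is to argue exactly as in the proof of Theorem~\ref{theor4*!}, using \cite[Corollary~4.1]{RS} in place of Proposition~\ref{pr3}. All hypotheses of Theorem~\ref{theor4*!} other than $Q\in FMO(x_0)$ are retained, so it suffices to show that, under
$$\limsup\limits_{\varepsilon\rightarrow 0}\frac{1}{\mu(B(x_0,\varepsilon))}\int\limits_{B(x_0,\varepsilon)}Q(x)\,d\mu(x)<\infty,$$
the function $Q$ still satisfies \eqref{eq3.7B} with a right-hand side $F(\varepsilon,\varepsilon_0)$ for which $F(\varepsilon,\varepsilon_0)=o(I^{\alpha}(\varepsilon,\varepsilon_0))$ as $\varepsilon\rightarrow 0$; granting this, Lemma~\ref{lem1} applies verbatim to $\frak{R}_{x_0,Q,B_R,\delta}(G)$ and yields its equicontinuity at $x_0$, which is precisely the conclusion of Theorem~\ref{theor4*!}.

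To carry this out I would fix $x_0\in G$, choose $\varepsilon_0>0$ and $r_0>\varepsilon_0$ with $\overline{B(x_0,\varepsilon_0)}\subset G$, and take the explicit family $\psi_{\varepsilon}(t)\equiv\psi(t):=\tfrac{1}{t\log(1/t)}$. Then $I(\varepsilon,\varepsilon_0)=\int_{\varepsilon}^{\varepsilon_0}\tfrac{dt}{t\log(1/t)}$ is finite and positive for all sufficiently small $\varepsilon$, and the substitution $u=\log(1/t)$ gives $I(\varepsilon,\varepsilon_0)=\log\log(1/\varepsilon)-\log\log(1/\varepsilon_0)$, so $I(\varepsilon,\varepsilon_0)\rightarrow\infty$ and $I^{\alpha}(\varepsilon,\varepsilon_0)\asymp(\log\log(1/\varepsilon))^{\alpha}$ as $\varepsilon\rightarrow 0$; in particular \eqref{eq3AB} holds. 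Now \cite[Corollary~4.1]{RS} asserts exactly that the bounded-mean condition above forces \eqref{eq3.7B} for this $\psi$ with $F(\varepsilon,\varepsilon_0)/I^{\alpha}(\varepsilon,\varepsilon_0)\rightarrow 0$, i.e.\ $F(\varepsilon,\varepsilon_0)=o(I^{\alpha}(\varepsilon,\varepsilon_0))$, which is the complete packet of assumptions demanded by Lemma~\ref{lem1}. Applying that lemma finishes the argument.

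The only substantive point — and hence the expected obstacle — is confirming that \cite[Corollary~4.1]{RS} is legitimately applicable in the present setting, i.e.\ that $\mu$ enjoys near $x_0$ the measure-regularity (a doubling estimate of the type \eqref{eq7}, assumed on $G$ just as in Proposition~\ref{pr3}) on which its proof rests. That proof proceeds by decomposing $B(x_0,\varepsilon_0)\setminus\overline{B(x_0,\varepsilon)}$ into the dyadic rings $A_j=\{2^{-(j+1)}\varepsilon_0<d(x,x_0)<2^{-j}\varepsilon_0\}$, estimating $\int_{A_j}Q(x)\psi^{\alpha}(d(x,x_0))\,d\mu(x)$ by a logarithmic factor times the average of $Q$ over $B(x_0,2^{-j}\varepsilon_0)$, and summing the resulting series; the hypothesis that these averages stay bounded is precisely what makes the total of order $o(I^{\alpha}(\varepsilon,\varepsilon_0))$ rather than merely $O(I^{\alpha}(\varepsilon,\varepsilon_0))$, exactly as in the $FMO$ case. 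Everything else is mechanical bookkeeping identical to the proof of Theorem~\ref{theor4*!}.~$\Box$
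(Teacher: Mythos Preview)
Your proposal is correct and follows exactly the route the paper takes: the paper's entire proof of Corollary~\ref{cor1} is the single sentence ``Taking into account \cite[Corollary~4.1]{RS}, by Lemma~\ref{lem1}, we obtain the following,'' which is precisely your strategy of replacing Proposition~\ref{pr3} by \cite[Corollary~4.1]{RS} and then invoking Lemma~\ref{lem1}. Your additional remarks about the explicit choice $\psi(t)=1/(t\log(1/t))$ and the need for a doubling condition of type~\eqref{eq7} are accurate elaborations that the paper leaves implicit (indeed the same regularity is already tacitly used in the proof of Theorem~\ref{theor4*!} via Proposition~\ref{pr3}).
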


\section{Equicontinuity of open discrete mappings }

In this section we prove a result similar to Theorem~\ref{theor4*!},
where instead of homeomorphisms are considered open discrete
mappings. However, in this case we have to require the following
additional condition: the mapping should have a maximal lifting
relative to an arbitrary curve. To give a definition.

Let $D\subset X,$ $f:D \rightarrow X^{\,\prime}$ be a discrete open
mapping, $\beta: [a,\,b)\rightarrow X^{\,\prime}$ be  a curve, and
$x\in\,f^{-1}\left(\beta(a)\right).$ A curve $\alpha:
[a,\,c)\rightarrow D$ is called a {\it maximal $f$-lifting} of
$\beta$ starting at $x,$ if $(1)\quad \alpha(a)=x\,;$ $(2)\quad
f\circ\alpha=\beta|_{[a,\,c)};$ $(3)$\quad for $c<c^{\prime}\le b,$
there is no curves $\alpha^{\prime}: [a,\,c^{\prime})\rightarrow D$
such that $\alpha=\alpha^{\prime}|_{[a,\,c)}$ and $f\circ
\alpha^{\,\prime}=\beta|_{[a,\,c^{\prime})}.$ In the case
$X=X^{\,\prime}={\Bbb R}^n,$ the assumption on $f$ yields that every
curve $\beta$ with $x\in f^{\,-1}\left(\beta(a)\right)$ has  a
maximal $f$-lif\-ting starting at $x$ (see
\cite[Corollary~II.3.3]{Ri}, \cite[Lemma~3.12]{MRV$_3$}).

\medskip
Consider the condition

\medskip\medskip\medskip
$\textbf{A}:$ {\bf for all $\beta: [a,\,b)\rightarrow X^{\,\prime}$
and $x\in f^{\,-1}\left(\beta(a)\right),$ a mapping $f$ has a
maximal $f$-lif\-ting starting at $x.$}

\medskip\medskip\medskip
Given $x_0\in D$ and $0<\varepsilon<\varepsilon_0,$ let $A=A(x_0,
\varepsilon, \varepsilon_0)$ be defined in (\ref{eq2A}), let
$S_i=S(x_0, r_i)$ be sphere centered at $x_0$ of a radius $r,$ and
let $Q\colon D\rightarrow [0,\infty]$ be a measurable function. The
following lemma holds.

\medskip
 \begin{lemma}\label{lem5}
{\sl\, Let $G$ be a domain in a metric space $(X, d, \mu)$ with a
finite Hausdorff dimension $\alpha\geqslant 2,$ and let
$(X^{\,\prime}, d^{\,\prime}, \mu^{\,\prime})$ be a metric space
which has a finite Hausdorff dimension $\alpha^{\,\prime}\geqslant
2.$ Let $f\colon G\rightarrow X^{\,\prime}$ be a ring $Q$-mapping at
$x_0\in G,$ and let $0<\varepsilon_0<{\rm dist}\,(x_0, \partial D)$
be such that $\overline{B(x_0, \varepsilon_0)}$ is compactum in $D.$

Assume that
\begin{equation} \label{eq3.7C}
\int\limits_{\varepsilon<d(x, x_0)<\varepsilon_0}
Q(x)\cdot\psi_{\varepsilon}^{\alpha}(d(x, x_0)) \ d\mu(x)\leqslant
F(\varepsilon, \varepsilon_0)\qquad\forall\,\,\varepsilon\in(0,
\varepsilon_0^{\,\prime})
\end{equation}
holds for some $\varepsilon_0^{\,\prime}\in (0, \varepsilon_0),$ and
some family of nonnegative Lebesgue measurable functions
$\{\psi_{\varepsilon}(t)\},$ $\psi_{\varepsilon}\colon (\varepsilon,
\varepsilon_0)\rightarrow [0, \infty],$ $\varepsilon\in\left(0,
\varepsilon_0^{\,\prime}\right),$ where
$F(\varepsilon,\varepsilon_0)$ is some function, and (\ref{eq3AB})
holds.
If $f$ satisfies the condition $\textbf{A},$ then
\begin{equation}\label{eq3C}
M_{\alpha^{\,\prime}}(\Gamma(f(\overline{B(x_0, \varepsilon)}),
\partial f(B(x_0, \varepsilon_0)), X^{\,\prime}))\leqslant
F(\varepsilon,\varepsilon_0)/ I^{\,\alpha}(\varepsilon,
\varepsilon_0)\qquad
\forall\,\,\varepsilon\in\left(0,\,\varepsilon_0^{\,\prime}\right)\,.
\end{equation}
}
 \end{lemma}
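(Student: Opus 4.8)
The plan is to obtain \eqref{eq3C} from Lemma~\ref{lem4} by a minorization argument using liftings through $f$. Put $S_1=S(x_0,\varepsilon)$, $S_2=S(x_0,\varepsilon_0)$, and $A=\{x\in G:\varepsilon<d(x,x_0)<\varepsilon_0\}$, so that Lemma~\ref{lem4}, whose hypotheses coincide with \eqref{eq3.7C} and \eqref{eq3AB} assumed here, gives $M_{\alpha^{\,\prime}}\bigl(f(\Gamma(S_1,S_2,A))\bigr)\leqslant F(\varepsilon,\varepsilon_0)/I^{\,\alpha}(\varepsilon,\varepsilon_0)$. I would then show that
$$
\Gamma\bigl(f(\overline{B(x_0,\varepsilon)}),\,\partial f(B(x_0,\varepsilon_0)),\,X^{\,\prime}\bigr)\ >\ f\bigl(\Gamma(S_1,S_2,A)\bigr),
$$
after which \eqref{eq32*A} yields $M_{\alpha^{\,\prime}}\bigl(\Gamma(f(\overline{B(x_0,\varepsilon)}),\partial f(B(x_0,\varepsilon_0)),X^{\,\prime})\bigr)\leqslant M_{\alpha^{\,\prime}}(f(\Gamma(S_1,S_2,A)))$, and combining the two inequalities gives \eqref{eq3C}. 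Note that $\overline{B(x_0,\varepsilon)}\subset\overline{B(x_0,\varepsilon_0)}\subset G$, so $f$ is defined on $\overline{B(x_0,\varepsilon)}$; and since $f$ is open (recall that in this section $f$ is open and discrete), $f(B(x_0,\varepsilon_0))$ is open, whence $\partial f(B(x_0,\varepsilon_0))\cap f(B(x_0,\varepsilon_0))=\varnothing$.

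To verify the minorization I would take an arbitrary $\gamma\colon[0,1]\to X^{\,\prime}$ in the left-hand family, pick $x\in\overline{B(x_0,\varepsilon)}$ with $f(x)=\gamma(0)$ (so $x\in B(x_0,\varepsilon_0)$ since $\varepsilon<\varepsilon_0$), and invoke condition \textbf{A} to get a maximal $f$-lifting $\alpha\colon[0,c)\to G$ of $\gamma|_{[0,1)}$ starting at $x$, with $f\circ\alpha=\gamma|_{[0,c)}$. The decisive point is that $\alpha$ must attain distance $\varepsilon_0$ from $x_0$. If $\alpha$ leaves the compactum $\overline{B(x_0,\varepsilon_0)}$ at some parameter, this is clear. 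Otherwise $\alpha([0,c))\subset\overline{B(x_0,\varepsilon_0)}$ has relatively compact range in $G$, so the set of all limit points of $\alpha(t)$ as $t\to c$ is a nonempty continuum contained in the fibre $f^{\,-1}(\gamma(c))$; discreteness of $f$ forces it to be a single point $y_{*}$, whence $\alpha$ extends continuously to $[0,c]$ with $\alpha(c)=y_{*}$. If $c<1$ one could lift $\gamma|_{[c,1)}$ starting at $y_{*}$ (again by \textbf{A}) and concatenate, contradicting property $(3)$ of a maximal lifting; hence $c=1$, and $f(\alpha(1))=\gamma(1)\in\partial f(B(x_0,\varepsilon_0))$ forces $\alpha(1)\notin B(x_0,\varepsilon_0)$, which together with $\alpha(1)\in\overline{B(x_0,\varepsilon_0)}$ gives $d(\alpha(1),x_0)=\varepsilon_0$. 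Thus in every case there is $\tau$ (with $\alpha$ extended to $[0,\tau]$ when needed) such that $f\circ\alpha=\gamma$ on $[0,\tau]$, $d(\alpha(t),x_0)<\varepsilon_0$ for $t\in[0,\tau)$, and $d(\alpha(\tau),x_0)=\varepsilon_0$.

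It then remains to cut out the connecting subcurve. Putting $s:=\sup\{t\in[0,\tau]:d(\alpha(t),x_0)\leqslant\varepsilon\}$, one has $0\leqslant s<\tau$ because $d(\alpha(0),x_0)\leqslant\varepsilon<\varepsilon_0=d(\alpha(\tau),x_0)$; by continuity $d(\alpha(s),x_0)=\varepsilon$ and $\varepsilon<d(\alpha(t),x_0)<\varepsilon_0$ for $t\in(s,\tau)$. Hence, reparametrized onto $[0,1]$, the curve $\alpha|_{[s,\tau]}$ belongs to $\Gamma(S_1,S_2,A)$, and $f\circ(\alpha|_{[s,\tau]})$ is, up to reparametrization, a restriction of $\gamma$; this is exactly the minorization sought. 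I expect the genuine difficulty to be concentrated in the decisive point of the second paragraph: extracting from condition \textbf{A} together with the discreteness of $f$ the fact that a maximal lifting which does not escape the compactum $\overline{B(x_0,\varepsilon_0)}$ must be defined on all of $[0,1)$ and reach $S(x_0,\varepsilon_0)$ at its terminal point (the ``escape to $\partial G$'' behaviour of maximal liftings in the metric setting). The remainder reduces to \eqref{eq32*A}, Lemma~\ref{lem4}, and a routine intermediate-value argument.
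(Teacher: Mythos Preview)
Your approach is correct and follows the same overall strategy as the paper---maximal liftings, then minorization and \eqref{eq32*A}---but with one streamlining. The paper only shows that every maximal lifting $\alpha\colon[0,c)\to G$ satisfies $d(\alpha(t),S(x_0,\varepsilon_0))\to 0$ as $t\to c-0$; since this does not guarantee that $\alpha$ actually hits $S(x_0,\varepsilon_0)$, the paper minorizes instead by $\Gamma(S_{\varepsilon},S_{\varepsilon_0-\delta},A(x_0,\varepsilon,\varepsilon_0-\delta))$, applies the ring $Q$-mapping inequality with a renormalized test function $\eta_\delta(t)=\eta(t)/\int_\varepsilon^{\varepsilon_0-\delta}\eta$, and only then lets $\delta\to 0$ before invoking Lemma~\ref{lem4}. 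Your extra observation---that when $c=1$ the extended endpoint $\alpha(1)\in\overline{B(x_0,\varepsilon_0)}$ satisfies $f(\alpha(1))=\gamma(1)\in\partial f(B(x_0,\varepsilon_0))$, and openness of $f$ then forces $\alpha(1)\notin B(x_0,\varepsilon_0)$, hence $\alpha(1)\in S(x_0,\varepsilon_0)$---lets you minorize directly by $f(\Gamma(S_1,S_2,A))$ and skip the $\delta$-limit entirely. This is a genuine, if modest, simplification.

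One small point to tidy: your case split should be on whether $\alpha([0,c))$ meets $S(x_0,\varepsilon_0)$, not on whether it leaves the \emph{closed} ball. As written, in your Case~2 (range contained in $\overline{B(x_0,\varepsilon_0)}$) you assert $d(\alpha(t),x_0)<\varepsilon_0$ for all $t\in[0,\tau)$ with $\tau=1$, but nothing prevents $\alpha$ from touching $S(x_0,\varepsilon_0)$ at some $t_0<c$ while still remaining in the closed ball. The fix is trivial: if such $t_0$ exists, take $\tau$ to be the first one and you are already done; otherwise $\alpha([0,c))\subset B(x_0,\varepsilon_0)$ (open ball), and your extension-and-endpoint argument produces $\tau=1$ with $d(\alpha(t),x_0)<\varepsilon_0$ on $[0,1)$ as needed.
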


 \medskip
 \begin{proof}
We can assume that $\Gamma:=\Gamma(f(\overline{B(x_0,
\varepsilon)}),
\partial f(B(x_0, \varepsilon_0)), X^{\,\prime})\ne \varnothing.$

Now $\partial f(B(x_0, \varepsilon_0))\ne \varnothing.$ Let
$\Gamma^{\,*}$ be a family of maximal $f$-liftings of $\Gamma$
started at $\overline{B(x_0, \varepsilon)}.$ Given a curve
$\beta:[0, 1)\rightarrow X^{\,\prime},$ $\beta\in \Gamma,$ we show
that it's maximal lifting $\alpha:[0, c)\rightarrow X$ satisfies the
condition: $d(\alpha(t), S(x_0, \varepsilon_0))\rightarrow 0$ as
$t\rightarrow c-0.$

\medskip
Assume the contrary, i.e., there exists $\beta\colon
[a,\,b)\rightarrow X^{\,\prime}$ from $\Gamma$ for which it's
maximal lifting $\alpha\colon [a,\,c)\rightarrow B(x_0,
\varepsilon_0)$ satisfies the condition $d(|\alpha|, \partial B(x_0,
\varepsilon_0))=\delta_0>0.$ Consider
$$G=\left\{x\in X:\, x=\lim\limits_{k\rightarrow\,\infty}
\alpha(t_k)
 \right\}\,,\quad t_k\,\in\,[a,\,c)\,,\quad
 \lim\limits_{k\rightarrow\infty}t_k=c\,.$$ Note that $c\ne b.$ In
fact, assume that $c=b,$ then $|\beta|=f(|\alpha|)$ is compactum in
$B(x_0, \varepsilon_0),$ and we obtain a contradiction.

Now, let $c\ne b.$ Letting to subsequences, if it is need, we can
restrict us by monotone sequences $t_k.$ For $x\in G,$ by continuity
of $f,$
$f\left(\alpha(t_k)\right)\rightarrow\,f(x)$ as
$k\rightarrow\infty,$ where $t_k\in[a,\,c),\,t_k\rightarrow c$ as
$k\rightarrow \infty.$ However,
$f\left(\alpha(t_k)\right)=\beta(t_k)\rightarrow\beta(c)$ as
$k\rightarrow\infty.$ Thus, $f$ is a constant on $G$ in $B(x_0,
\varepsilon_0).$ From other hand, $\overline{\alpha}$ is a compact
set, because $\overline{\alpha}$ is a closed subset of the compact
space $\overline{B(x_0, \varepsilon_0)}$ (see \cite[Theorem~2.II.4,
$\S\,41$]{Ku}). Now, by Cantor condition on the compact
$\overline{\alpha},$ by monotonicity of
$\alpha\left(\left[t_k,\,c\right)\right),$
$$G\,=\,\bigcap\limits_{k\,=\,1}^{\infty}\,\overline{\alpha\left(\left[t_k,\,c\right)\right)}
\ne\varnothing\,,
$$
%
see \cite[1.II.4, $\S\,41$]{Ku}. Now, by \cite[Theorem~5.II.5,
$\S\,47$]{Ku}, $\overline{\alpha}$ is connected. By discreteness of
$f,$ $G$ is a single-point set, and $\alpha\colon
[a,\,c)\rightarrow\,B(x_0, \varepsilon_0)$ extends to a closed curve
$\alpha\colon [a,\,c]\rightarrow K\subset B(x_0, \varepsilon_0),$
and $f\left(\alpha(c)\right)=\beta(c).$ By condition $\textbf{A},$
there exists a new maximal lifting $\alpha^{\,\prime}$ of
$\beta|_{[c,\,b)}$ starting in $\alpha(c).$ Uniting $\alpha$ and
$\alpha^{\,\prime},$ we obtain a new lifting
$\alpha^{\,\prime\prime}$ of $\beta,$ which is defined in $[a,
c^{\prime}),$ \,\,$c^{\,\prime}\,\in\,(c,\,b),$ that contradicts to
''maximality'' of $\alpha.$ Thus, $d(\alpha(t), S(x_0,
\varepsilon_0))\rightarrow 0$ as $t\rightarrow c-0.$

Observe that $\Gamma(f(\overline{B(x_0, \varepsilon)}), \partial
f(B(x_0, \varepsilon_0)), X^{\,\prime})>f(\Gamma^{*}),$ and,
consequently, by~\eqref{eq32*A}
\begin{equation}\label{eq7AB}
M_{\alpha^{\,\prime}}\left(\Gamma(f(\overline{B(x_0, \varepsilon)}),
\partial f(B(x_0, \varepsilon_0)), X^{\,\prime}))\right)\leqslant
M_{\alpha^{\,\prime}}\left(f(\Gamma^{*})\right)\,.
\end{equation}
Consider
$$S_{\,\varepsilon}=S(x_0,\,\varepsilon)\,,\quad
S_{\,\varepsilon_{0}}=S(x_0,\,\varepsilon_0)\,,$$
where $\varepsilon_0$ is from conditions of the lemma, and
$\varepsilon\in\left(0,\,\varepsilon_0^{\,\prime}\right).$ Since
every curve $\alpha\in\Gamma^{*}$ satisfies the condition
$d(\alpha(t), S(x_0, \varepsilon_0))\rightarrow 0$ as $t\rightarrow
c-0,$ we obtain that $\Gamma\left(S_{\varepsilon},
S_{\varepsilon_0-\delta}, A(x_0, \varepsilon,
\varepsilon_0-\delta)\right)<\Gamma^{*}$ at sufficiently small
$\delta>0$ and, consequently, $f(\Gamma\left(S_{\varepsilon},
S_{\varepsilon_0-\delta}, A(x_0, \varepsilon,
\varepsilon_0-\delta)\right))<f(\Gamma^{\,*}).$
Now
\begin{equation}\label{eq3.3.1}
M_{\alpha^{\,\prime}}\left(f(\Gamma^{*})\right)\leqslant
 M_{\alpha^{\,\prime}}\left(f\left(\Gamma\left(S_{\varepsilon}, S_{\varepsilon_0-\delta},
 A(x_0, \varepsilon, \varepsilon_0-\delta)\right)\right)\right)\,.
\end{equation}
By~\eqref{eq7AB} and \eqref{eq3.3.1},
\begin{equation}\label{eq5aa}
M_{\alpha^{\,\prime}}(\Gamma(f(\overline{B(x_0, \varepsilon)}),
\partial f(B(x_0, \varepsilon_0)), X^{\,\prime}))\leqslant
 M_{\alpha^{\,\prime}}\left(f\left(\Gamma\left(S_{\varepsilon}, S_{\varepsilon_0-\delta},
 A(x_0, \varepsilon, \varepsilon_0-\delta)\right)\right)\right)\,.
\end{equation}
Let $\eta(t)$ be an arbitrary nonnegative Lebesgue measurable
function with the condition
$\int\limits_{\varepsilon}^{\varepsilon_0}\eta(t)dt=1.$ Consider the
family of function
$\eta_{\delta}(t)=\frac{\eta(t)}{\int\limits_{\varepsilon}^{\varepsilon_0-\delta}\eta(t)dt}.$
(Since $\int\limits_{\varepsilon}^{\varepsilon_0}\eta(t)dt=1,$ we
can choose $\delta>0$ such that
$\int\limits_{\varepsilon}^{\varepsilon_0-\delta}\eta(t)dt>0$).
Since
$\int\limits_{\varepsilon}^{\varepsilon_0-\delta}\eta_{\delta}(t)dt=1,$
$$M_{\alpha^{\,\prime}}\left(f\left(\Gamma\left(S_{\varepsilon}\,,
S_{\varepsilon_0-\delta}, A(x_0, \varepsilon,
\varepsilon_0-\delta)\right)\right)\right)\,\leqslant$$
\begin{equation}\label{eq8A}
\leqslant
\frac{1}{\left(\int\limits_{\varepsilon}^{\varepsilon_0-\delta}\eta(t)dt\right)^{\alpha}}
\int\limits_{\varepsilon<d(x,
x_0)<\varepsilon_0}Q(x)\cdot\eta^{\alpha}(d(x, x_0))\, \ d\mu(x)\,.
 \end{equation}
Letting to the limit as $\delta\rightarrow 0,$ by~\eqref{eq5aa}, we
obtain that
$$M_{\alpha^{\,\prime}}\left(f\left(\Gamma\left(S_{\varepsilon}\,,
S_{\varepsilon_0}, A(x_0, \varepsilon,
\varepsilon_0)\right)\right)\right)\,\leqslant
\int\limits_{\varepsilon<d(x,
x_0)<\varepsilon_0}Q(x)\cdot\eta^{\alpha}(d(x, x_0))\, \ d\mu(x)$$
for every nonnegative Lebesgue measurable function $\eta(t)$ with
$\int\limits_{\varepsilon}^{\varepsilon_0}\eta(t)dt=1.$ The desired
conclusion follows now from the lemma~\ref{lem4}.~$\Box$
 \end{proof}

\medskip
Denote $\frak{L}_{x_0, Q, B_R, \delta, \textbf{A}}(D)$ a family of
all open discrete ring $Q$-mappings $f\colon D\rightarrow
B_R\setminus K_f$ at $x_0\in D$ with $\textbf{A}$-condition, where
$B_R\subset X^{\,\prime}$ is some fixed ball of a radius $R,$ and
$K_f$ is some nondegenerate continuum in $B_R$ with$\sup\limits_{x,
y\in K_f} d^{\,\prime}(x, y)\geqslant \delta>0.$ A following
statement is a main tool for a proof of equicontinuity result in a
general situation.

\medskip
 \begin{lemma}\label{lem1B}
{\sl\, Let $D$ be a domain in a locally compact and locally
connected metric space $(X, d, \mu)$ with a finite Hausdorff
dimension $\alpha\geqslant 2,$ and let $(X^{\,\prime}, d^{\,\prime},
\mu^{\,\prime})$ be an Ahlfors $\alpha^{\,\prime}$-regular metric
space which supports $(1;\alpha^{\,\prime})$-Poincare inequality.

Assume also that, \eqref{eq3.7C} holds for some
$\varepsilon_0^{\,\prime}\in (0, \varepsilon_0)$ and some family of
nonnegative Lebesgue measurable functions
$\{\psi_{\varepsilon}(t)\},$ $\psi_{\varepsilon}\colon (\varepsilon,
\varepsilon_0)\rightarrow (0, \infty),$ $\varepsilon\in\left(0,
\varepsilon_0^{\,\prime}\right),$ where $F(\varepsilon,
\varepsilon_0)$ satisfies the condition $F(\varepsilon,
\varepsilon_0)=o(I^n(\varepsilon, \varepsilon_0))$ as
$\varepsilon\rightarrow 0,$ and $I(\varepsilon, \varepsilon_0)$ is
defined by \eqref{eq3AB}.

Now, $\frak{L}_{x_0, Q, B_R, \delta, \textbf{A}}(D)$ is
equicontinuous at $x_0.$ }
 \end{lemma}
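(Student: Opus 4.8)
The plan is to mirror the proof of Lemma~\ref{lem1}, replacing the use of Proposition~\ref{pr2} applied to the image of a homeomorphism by the corresponding statement about open discrete mappings supplied by Lemma~\ref{lem5}. First I would fix $x_0\in D$ and $f\in\frak{L}_{x_0, Q, B_R, \delta, \textbf{A}}(D)$. Since $X$ is locally compact and locally connected, choose a decreasing sequence of radii $\varepsilon_k\to 0$ and continua $V_k\subset D$ with $V_{k+1}\subset\overline{B(x_0,\varepsilon_k)}\subset V_k\subset\overline{B(x_0,\varepsilon_0)}$, exactly as before. By continuity and openness of $f$, the sets $f(V_k)$ and $f(\overline{B(x_0,\varepsilon_0)})$ are continua in $B_R$, and $K_f$ is a continuum with $\operatorname{diam} K_f\geqslant\delta>0$.

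The next step is to obtain a two-sided modulus estimate for $\Gamma_k:=\Gamma(K_f, f(\overline{B(x_0,\varepsilon_k)}), X^{\,\prime})$. For the lower bound, apply Proposition~\ref{pr2} (valid since $X^{\,\prime}$ is Ahlfors $\alpha^{\,\prime}$-regular and supports a $(1;\alpha^{\,\prime})$-Poincare inequality): there is $C>0$ with
\begin{equation}\label{eq-lowerB}
M_{\alpha^{\,\prime}}(\Gamma_k)\geqslant\frac{1}{C}\cdot\frac{\min\{\operatorname{diam} K_f,\operatorname{diam} f(\overline{B(x_0,\varepsilon_k)})\}}{R}\,.
\end{equation}
For the upper bound, I would show $\Gamma_k>\Gamma(f(\overline{B(x_0,\varepsilon_k)}),\partial f(B(x_0,\varepsilon_0)),X^{\,\prime})$: any $\gamma\in\Gamma_k$ starts in $f(\overline{B(x_0,\varepsilon_k)})\subset f(B(x_0,\varepsilon_0))$ and ends in $K_f$, which lies in $B_R\setminus f(B(x_0,\varepsilon_0))$, so by connectedness of $|\gamma|$ (\cite[Theorem~1, $\S\,46$, Section~I]{Ku}) it meets $\partial f(B(x_0,\varepsilon_0))$, and the initial subcurve up to the first such meeting point belongs to $\Gamma(f(\overline{B(x_0,\varepsilon_k)}),\partial f(B(x_0,\varepsilon_0)),X^{\,\prime})$. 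Then \eqref{eq32*A} together with Lemma~\ref{lem5} (whose hypotheses, including condition $\textbf{A}$ and \eqref{eq3.7C}, hold by assumption, with $\varepsilon=\varepsilon_k$) gives
\begin{equation}\label{eq-upperB}
M_{\alpha^{\,\prime}}(\Gamma_k)\leqslant M_{\alpha^{\,\prime}}(\Gamma(f(\overline{B(x_0,\varepsilon_k)}),\partial f(B(x_0,\varepsilon_0)),X^{\,\prime}))\leqslant F(\varepsilon_k,\varepsilon_0)/I^{\,\alpha}(\varepsilon_k,\varepsilon_0)\,.
\end{equation}

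Combining \eqref{eq-lowerB} and \eqref{eq-upperB} with the hypothesis $F(\varepsilon,\varepsilon_0)=o(I^{n}(\varepsilon,\varepsilon_0))$ (reading $n$ as $\alpha$, as elsewhere in the paper), the right-hand side of \eqref{eq-upperB} tends to $0$ as $k\to\infty$, hence $\min\{\operatorname{diam} K_f,\operatorname{diam} f(\overline{B(x_0,\varepsilon_k)})\}\to 0$; since $\operatorname{diam} K_f\geqslant\delta>0$ uniformly, for all large $k$ the minimum equals $\operatorname{diam} f(\overline{B(x_0,\varepsilon_k)})$, so $\operatorname{diam} f(\overline{B(x_0,\varepsilon_k)})\to 0$. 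Thus for every $\sigma>0$ there is $k_1=k_1(\sigma)$ with $d^{\,\prime}(f(x),f(x_0))<\sigma$ whenever $d(x,x_0)<\varepsilon_{k_1}$; as the bound in \eqref{eq-upperB} and the constants $C$, $R$, $\delta$ do not depend on the particular $f\in\frak{L}_{x_0, Q, B_R, \delta, \textbf{A}}(D)$, the choice of $\varepsilon_{k_1}$ is uniform over the family, which is therefore equicontinuous at $x_0$. I expect the only real subtlety to be the justification that $\Gamma_k>\Gamma(f(\overline{B(x_0,\varepsilon_k)}),\partial f(B(x_0,\varepsilon_0)),X^{\,\prime})$ — i.e. that every curve joining the two continua genuinely crosses the topological boundary of $f(B(x_0,\varepsilon_0))$ — but this is the standard connectedness argument already used in the proof of Lemma~\ref{lem1}, and once Lemma~\ref{lem5} is in hand everything else is bookkeeping.
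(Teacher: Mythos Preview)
Your proposal is correct and follows essentially the same route as the paper: both combine the lower modulus bound from Proposition~\ref{pr2} with the upper bound supplied by Lemma~\ref{lem5}, linked through the minorization $\Gamma(K_f,\,\cdot\,,X^{\,\prime})>\Gamma(\,\cdot\,,\partial f(B(x_0,\varepsilon_0)),X^{\,\prime})$. The only cosmetic difference is that the paper applies Proposition~\ref{pr2} to the continua $f(V_k)$ rather than to $f(\overline{B(x_0,\varepsilon_k)})$; since closed balls in a general metric space need not be connected, you should feed the $V_k$ you already set up (which satisfy $V_k\subset\overline{B(x_0,\varepsilon_{k-1})}$, so Lemma~\ref{lem5} still applies) into Proposition~\ref{pr2}.
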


\medskip
 \begin{proof}
Fix $f\in\frak{L}_{x_0, Q, B_R, \delta, \textbf{A}}(D).$ Set
$A:=B(x_0, \varepsilon_0)\subset D.$ Since $X$ is locally connected
and locally compact space, we can find a sequence $B(x_0,
\varepsilon_k),$ $k=0,1,2,\ldots,$ $\varepsilon_k\rightarrow 0$ as
$k\rightarrow\infty,$ such that $V_{k+1}\subset \overline{B(x_0,
\varepsilon_k)}\subset V_k,$ where $V_k$ are continua in $G.$
Observe that $f(V_k)$ are $K_f$ continua in $B_R,$ in fact, $f(V_k)$
is a continuum as continuous image of a continuum (see e.g.
\cite[Theorem~1.III.41 and Theorem 3.I.46]{Ku}).

Note that $\Gamma(K_f, f(V_k), X^{\,\prime})>\Gamma(f(V_k),
\partial f(A), X^{\,\prime})$ (see \cite[Theorem~1.I.5, $\S\,46$]{Ku}), so, by (\ref{eq32*A})
\begin{equation}\label{eq9B}
M_{\alpha^{\,\prime}}(\Gamma(f(V_k), \partial f(A),
X^{\,\prime}))\geqslant M_{\alpha^{\,\prime}}(\Gamma(K_f, f(V_k),
X^{\,\prime}))\,.
\end{equation}
By Proposition \ref{pr2}
\begin{equation}\label{eq2B}
M_{\alpha^{\,\prime}}(\Gamma(K_f, f(V_k), X^{\,\prime})\geqslant
\frac{1}{C_1}\cdot\frac{\min\{{\rm diam}\,f(V_k), {\rm
diam}\,K_f\}}{R}\,.
 \end{equation}
By Lemma~\ref{lem5}, $M_{\alpha^{\,\prime}}(\Gamma(K_f, f(V_k),
X^{\,\prime})\rightarrow 0$ as $k\rightarrow \infty$ and, therefore,
by~\eqref{eq3.7C} and \eqref{eq2B} we obtain that
$$\min\{{\rm diam}\,f(V_k), {\rm diam}\,K_f\}={\rm diam}\,f(V_k)$$ as
$k\rightarrow \infty.$ By~\eqref{eq3.7C} and~\eqref{eq2B} it follows
that, for every $\sigma>0$ there exists $k_0=k_0(\sigma)$ such that
\begin{equation}\label{eq1F}
{\rm diam}\,f(C)\leqslant \sigma
\end{equation}
for every $k\geqslant k_0(\sigma).$ Since $V_{k+1}\subset
\overline{B(x_0, \varepsilon_k)}\subset V_k,$ the inequality
(\ref{eq1F}) holds in $\overline{B(x_0, \varepsilon_k)}$ as
$k\geqslant k_0(\sigma).$ Set
$\varepsilon(\sigma):=\varepsilon_{k_0}.$ Finally, given $\sigma>0$
there exists $\varepsilon(\sigma)>0$ such that $d^{\,\prime}(f(x),
f(x_0))<\sigma$ as $d(x, x_0)<\varepsilon(\sigma)$ for every
$f\in\frak{L}_{x_0, Q, B_R, \delta, \textbf{A}}(D).$ So,
$\frak{L}_{x_0, Q, B_R, \delta, \textbf{A}}(D)$ is equicontinuous at
$x_0.$~$\Box$
\end{proof}

\medskip
Denote $\frak{L}_{x_0, Q, B_R, \delta, \textbf{A}}(D)$ a family of
all open discrete ring $Q$-mappings $f\colon D\rightarrow
B_R\setminus K_f$ at $x_0\in D$ with $\textbf{A}$-condition, where
$B_R\subset X^{\,\prime}$ is some fixed ball of a radius $R,$ and
$K_f$ is some nondegenerate continuum in $B_R$ with$\sup\limits_{x,
y\in K_f} d^{\,\prime}(x, y)\geqslant \delta>0.$ Now, from Lemma
\ref{lem1B} and Proposition \ref{pr3}, we obtain the following
statement.

\medskip
\begin{theorem}\label{th1}
{\sl Let $D$ be a domain in a locally compact and locally connected
metric space $(X, d, \mu)$ with a finite Hausdorff dimension
$\alpha\geqslant 2,$ and let $(X^{\,\prime}, d^{\,\prime},
\mu^{\,\prime})$ be an Ahlfors $\alpha^{\,\prime}$-regular metric
space which supports $(1;\alpha^{\,\prime})$-Poincare inequality.

If $Q\in FMO(x_0),$ then $\frak{L}_{x_0, Q, B_R, \delta,
\textbf{A}}(D)$ is equicontinuous at $x_0.$}
\end{theorem}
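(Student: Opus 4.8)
The plan is to derive this theorem exactly as Theorem~\ref{theor4*!} was derived from Lemma~\ref{lem1} and Proposition~\ref{pr3}: namely, by combining the equicontinuity criterion of Lemma~\ref{lem1B} with the $FMO$-implication of Proposition~\ref{pr3}. First I would recall that Lemma~\ref{lem1B} yields equicontinuity of $\frak{L}_{x_0, Q, B_R, \delta, \textbf{A}}(D)$ at $x_0$ under the hypothesis that the integral bound \eqref{eq3.7C} holds for some $\varepsilon_0'\in(0,\varepsilon_0)$ and some family $\{\psi_\varepsilon(t)\}$ of nonnegative measurable functions on $(\varepsilon,\varepsilon_0)$, with $I(\varepsilon,\varepsilon_0)$ as in \eqref{eq3AB} and with the decay condition $F(\varepsilon,\varepsilon_0)=o(I^{\alpha}(\varepsilon,\varepsilon_0))$ as $\varepsilon\to 0$. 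So it suffices to verify, under the sole assumption $Q\in FMO(x_0)$, that such a family exists.

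Next I would invoke Proposition~\ref{pr3} to produce that family. Fix $\varepsilon_0>0$ with $\overline{B(x_0,\varepsilon_0)}\subset D$ a compactum in $D$ (possible since $X$ is locally compact), shrinking $\varepsilon_0$ if necessary so that $B(x_0,2r)\subset D$ for all $r<r_0$. Since $X$ is Ahlfors $\alpha$-regular, the measure growth bound \eqref{eq7} holds automatically: indeed $\mu(G\cap B(x_0,2r))\leqslant C(2r)^{\alpha}= 2^{\alpha}C\,r^{\alpha}\leqslant 2^{\alpha}C^2\,\mu(G\cap B(x_0,r))$, and $C'\leqslant \gamma\log^{\alpha-2}(1/r)$ for all small $r$ when $\alpha\geqslant 2$, so the hypothesis of Proposition~\ref{pr3} is met. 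Hence, taking $\psi_\varepsilon(t)\equiv\psi(t)=\frac{1}{t\log(1/t)}$, Proposition~\ref{pr3} guarantees that $Q$ satisfies \eqref{eq3.7C} with a function $F(\varepsilon,\varepsilon_0)$ for which $F(\varepsilon,\varepsilon_0)/I^{\alpha}(\varepsilon,\varepsilon_0)\to 0$ as $\varepsilon\to 0$; that is, $F(\varepsilon,\varepsilon_0)=o(I^{\alpha}(\varepsilon,\varepsilon_0))$. One should also check $I(\varepsilon,\varepsilon_0)=\int_\varepsilon^{\varepsilon_0}\frac{dt}{t\log(1/t)}=\log\log(1/\varepsilon)-\log\log(1/\varepsilon_0)\in(0,\infty)$ for $\varepsilon$ small, so \eqref{eq3AB} holds as well.

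With both ingredients in hand, the conclusion is immediate: all hypotheses of Lemma~\ref{lem1B} are fulfilled, and therefore $\frak{L}_{x_0, Q, B_R, \delta, \textbf{A}}(D)$ is equicontinuous at $x_0$. I do not expect any genuine obstacle here; the only point requiring a little care is the bookkeeping that verifies the Ahlfors regularity of $X$ forces the auxiliary condition \eqref{eq7} of Proposition~\ref{pr3}, and that the specific $\psi$ gives a finite positive $I(\varepsilon,\varepsilon_0)$ — both of which are routine estimates on $\log$ and $\log\log$. The substantive content has already been absorbed into Lemma~\ref{lem1B} (maximal liftings, the minorization argument, Proposition~\ref{pr2}) and into Proposition~\ref{pr3} (the $FMO$-to-integral-decay passage), so the theorem is essentially a corollary.
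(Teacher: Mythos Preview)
Your approach is exactly the paper's: the theorem is stated immediately after the sentence ``Now, from Lemma~\ref{lem1B} and Proposition~\ref{pr3}, we obtain the following statement,'' so the entire proof is the combination you describe. One small caveat: you write ``Since $X$ is Ahlfors $\alpha$-regular,'' but the hypotheses of Theorem~\ref{th1} only assert that $(X,d,\mu)$ has finite Hausdorff dimension $\alpha\geqslant 2$ and that $X'$ is Ahlfors $\alpha'$-regular --- Ahlfors regularity of $X$ (and hence condition~\eqref{eq7}) is a hypothesis of Proposition~\ref{pr3}, not of the theorem, and the paper simply invokes Proposition~\ref{pr3} without commenting on this point.
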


\medskip
Taking into account \cite[Corollary~4.1]{RS}, by Lemma~\ref{lem1B},
we obtain the following.

\medskip
\begin{corollary}\label{cor2}
{\sl A conclusion of Theorem~{\em\ref{th1}} holds, if instead of
condition $Q\in FMO(x_0)$ we require that
$$\limsup\limits_{\varepsilon\rightarrow 0}\frac{1}{\mu(B(x_0, \varepsilon))}\int\limits
_{B(x_0, \varepsilon)}Q(x)d\mu(x)<\infty\,.$$ }
\end{corollary}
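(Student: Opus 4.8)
The final statement is Corollary~\ref{cor2}, which asserts that the equicontinuity conclusion of Theorem~\ref{th1} remains valid when the hypothesis $Q\in FMO(x_0)$ is replaced by the weaker integral bound $\limsup_{\varepsilon\to 0}\frac{1}{\mu(B(x_0,\varepsilon))}\int_{B(x_0,\varepsilon)}Q\,d\mu<\infty$.

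The plan is to mimic exactly the derivation of Theorem~\ref{th1} from Lemma~\ref{lem1B}, replacing the role played by Proposition~\ref{pr3} (the $FMO$-type estimate) with the analogous estimate coming from \cite[Corollary~4.1]{RS}. Concretely, the whole point of both Proposition~\ref{pr3} and \cite[Corollary~4.1]{RS} is to verify that, under the stated hypothesis on $Q$, the divergence condition \eqref{eq3.7C} of Lemma~\ref{lem1B} holds with a suitable choice of the kernel family $\psi_\varepsilon$ and with $F(\varepsilon,\varepsilon_0)=o(I^\alpha(\varepsilon,\varepsilon_0))$ as $\varepsilon\to 0$. So the first step is to invoke \cite[Corollary~4.1]{RS}: with $\psi_\varepsilon(t)\equiv\psi(t):=\frac{1}{t\log(1/t)}$ (the same kernel as in Proposition~\ref{pr3}), the hypothesis $\limsup_{\varepsilon\to 0}\frac{1}{\mu(B(x_0,\varepsilon))}\int_{B(x_0,\varepsilon)}Q\,d\mu<\infty$ yields the bound \eqref{eq3.7C} with $F(\varepsilon,\varepsilon_0)/I^\alpha(\varepsilon,\varepsilon_0)\to 0$ as $\varepsilon\to 0$. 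The second step is simply to feed this verified hypothesis into Lemma~\ref{lem1B}, whose conclusion is precisely that the family $\frak{L}_{x_0,Q,B_R,\delta,\textbf{A}}(D)$ is equicontinuous at $x_0$. That is exactly the assertion of the corollary.

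There is essentially no obstacle here, since the corollary is a straightforward substitution of one sufficient condition for another inside an already-proved lemma; the entire content has been off-loaded to \cite[Corollary~4.1]{RS} and to Lemma~\ref{lem1B}. The only point requiring a word of care is consistency of the ambient regularity assumptions: \cite[Corollary~4.1]{RS} is stated for an Ahlfors $\alpha$-regular domain, so one should note that $G$ (resp.\ $D$) being a domain in the given locally compact, locally connected space with Hausdorff dimension $\alpha$ carries enough structure — or, more precisely, that the integral bound plus the local-volume regularity implicit in the setup is what \cite[Corollary~4.1]{RS} actually uses — in order to legitimately conclude \eqref{eq3.7C}. Once that is acknowledged, the proof is the single line: \emph{the desired conclusion follows from Lemma~\ref{lem1B} and \cite[Corollary~4.1]{RS}.}

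In short, the write-up will be one short paragraph: by \cite[Corollary~4.1]{RS}, the stated condition on $Q$ implies that \eqref{eq3.7C} holds with $\psi_\varepsilon(t)=\frac{1}{t\log(1/t)}$ and $F(\varepsilon,\varepsilon_0)=o(I^\alpha(\varepsilon,\varepsilon_0))$; applying Lemma~\ref{lem1B} then gives equicontinuity of $\frak{L}_{x_0,Q,B_R,\delta,\textbf{A}}(D)$ at $x_0$, which completes the proof.
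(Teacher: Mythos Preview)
Your proposal is correct and follows exactly the paper's approach: the paper's entire proof is the single sentence ``Taking into account \cite[Corollary~4.1]{RS}, by Lemma~\ref{lem1B}, we obtain the following,'' which is precisely the two-step argument you outline. Your additional remark about the ambient regularity hypotheses needed for \cite[Corollary~4.1]{RS} is a legitimate point of care that the paper itself does not spell out.
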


\section{Removability of isolated singularities}

A proof of the following lemma can be given by analogy with
\cite[Lemma~8.1]{RS}.

\medskip
 \begin{lemma}\label{lem3.1!}
{\sl\, Let $D$ be a domain in a 
metric space $(X, d, \mu)$ with a finite Hausdorff dimension
$\alpha\geqslant 2,$ and let $(X^{\,\prime}, d^{\,\prime},
\mu^{\,\prime})$ be an Ahlfors $\alpha^{\,\prime}$-regular metric
space which supports $(1;\alpha^{\,\prime})$-Poincare inequality.
Assume that, there exists $\varepsilon_0>0$ and a Lebesgue
measurable function $\psi(t)\colon(0, \varepsilon_0)\rightarrow
[0,\infty]$ with the following property: for every
$\varepsilon_2\in(0, \varepsilon_0]$ there exists $\varepsilon_1\in
(0, \varepsilon_2],$ such that
\begin{equation} \label{eq5C}
0<I(\varepsilon,
\varepsilon_2):=\int\limits_{\varepsilon}^{\varepsilon _2}\psi(t)dt
< \infty
\end{equation}
for every $\varepsilon\in (0,\varepsilon_1).$ Suppose also, that
\begin{equation} \label{eq4*}
\int\limits_{\varepsilon<d(x,
x_0)<\varepsilon_0}Q(x)\cdot\psi^\alpha(d(x, x_0)) \
dv(x)\,=\,o\left(I^\alpha(\varepsilon, \varepsilon_0)\right)
\end{equation}
holds as $\varepsilon\rightarrow 0.$

Let $\Gamma$ be a family of all curves
$\gamma(t)\colon(0,1)\rightarrow D\setminus\{x_0\}$ obeying
$\gamma(t_k)\rightarrow x_0$ for some $t_k\rightarrow 0,$
$\gamma(t)\not\equiv x_0.$ Then
$M_{\alpha^{\,\prime}}\left(f(\Gamma)\right)=0.$}
 \end{lemma}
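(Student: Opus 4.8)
The plan is to reduce the modulus estimate for $f(\Gamma)$ to the ring $Q$-mapping inequality applied on a countable exhaustion of $D\setminus\{x_0\}$ by rings centered at $x_0$, and then to exploit the hypothesis \eqref{eq4*} that the relevant integral is $o(I^{\alpha}(\varepsilon,\varepsilon_0))$. First I would fix a sequence $\varepsilon_k\to 0$ with $\varepsilon_k<\varepsilon_0$ and $\overline{B(x_0,\varepsilon_0)}$ lying in a compact subset of $D$ (possible since $\varepsilon_0$ may be taken small and $X$ is locally compact), and observe that every curve $\gamma\in\Gamma$ has the property that $|\gamma|$ meets $S(x_0,\varepsilon_k)$ and $S(x_0,\varepsilon_0)$ for all large $k$, because $\gamma(t_k)\to x_0$ while $\gamma\not\equiv x_0$; hence, passing to an appropriate subcurve (using \cite[Theorem~1, $\S\,46$, Section~I]{Ku} to locate the crossing points of the spheres), one gets
\begin{equation}\label{eqPP1}
\Gamma > \Gamma\bigl(S(x_0,\varepsilon_k),\,S(x_0,\varepsilon_0),\,A(x_0,\varepsilon_k,\varepsilon_0)\bigr)
\end{equation}
for each sufficiently large $k$, so that by \eqref{eq32*A} and the monotonicity of the modulus,
$$M_{\alpha^{\,\prime}}\bigl(f(\Gamma)\bigr)\leqslant M_{\alpha^{\,\prime}}\Bigl(f\bigl(\Gamma(S(x_0,\varepsilon_k),S(x_0,\varepsilon_0),A(x_0,\varepsilon_k,\varepsilon_0))\bigr)\Bigr).$$

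Next I would apply Lemma~\ref{lem4} with $\psi_{\varepsilon}\equiv\psi$ and $F(\varepsilon,\varepsilon_0)$ the left-hand side of \eqref{eq4*}: by hypothesis \eqref{eq5C} we have $0<I(\varepsilon,\varepsilon_0)<\infty$ for all small $\varepsilon$, so Lemma~\ref{lem4} gives
\begin{equation}\label{eqPP2}
M_{\alpha^{\,\prime}}\Bigl(f\bigl(\Gamma(S(x_0,\varepsilon_k),S(x_0,\varepsilon_0),A(x_0,\varepsilon_k,\varepsilon_0))\bigr)\Bigr)\leqslant \frac{F(\varepsilon_k,\varepsilon_0)}{I^{\,\alpha}(\varepsilon_k,\varepsilon_0)}.
\end{equation}
Now condition \eqref{eq4*} says precisely that the right-hand side of \eqref{eqPP2} tends to $0$ as $k\to\infty$. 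Combining \eqref{eqPP1} and \eqref{eqPP2} and letting $k\to\infty$, we conclude $M_{\alpha^{\,\prime}}(f(\Gamma))=0$.

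The main obstacle is the justification of the minorization \eqref{eqPP1}, i.e. the claim that a curve $\gamma\colon(0,1)\to D\setminus\{x_0\}$ with $\gamma(t_k)\to x_0$ and $\gamma\not\equiv x_0$ actually contains, for large $k$, a subcurve lying in the ring $A(x_0,\varepsilon_k,\varepsilon_0)$ with endpoints on the two bounding spheres: one must produce a parameter subinterval on which $d(\gamma(\cdot),x_0)$ runs from $\varepsilon_k$ up to $\varepsilon_0$ while staying in the open ring, which requires care because $\gamma$ need not be monotone in $d(\cdot,x_0)$ and because the endpoint behavior at $0$ and $1$ is only controlled along a subsequence. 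This is handled by a standard connectedness argument: pick $s$ near the start with $d(\gamma(s),x_0)$ small (using $\gamma(t_k)\to x_0$), pick $s'$ with $d(\gamma(s'),x_0)$ comparable to $\varepsilon_0$ or at least bounded below (using $\gamma\not\equiv x_0$ together with continuity, shrinking $\varepsilon_0$ if necessary so that $|\gamma|\not\subset B(x_0,\varepsilon_0)$), and then invoke \cite[Theorem~1, $\S\,46$, Section~I]{Ku} on the connected set $\gamma([s,s'])$ to extract the last sphere crossings and isolate the desired subcurve — exactly the device already used in the proof of Lemma~\ref{lem5}. Everything else is a direct quotation of Lemma~\ref{lem4}, \eqref{eq32*A}, and the hypothesis \eqref{eq4*}. $\Box$
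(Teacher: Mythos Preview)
There is a genuine gap in the minorization step \eqref{eqPP1}. A curve $\gamma\in\Gamma$ is only required to satisfy $\gamma(t_k)\to x_0$ along some sequence and to take values in $D\setminus\{x_0\}$; nothing forces $|\gamma|$ to meet $S(x_0,\varepsilon_0)$. For instance, $\gamma$ may lie entirely inside $B(x_0,\varepsilon_0/2)$, in which case no subcurve of $\gamma$ belongs to $\Gamma\bigl(S(x_0,\varepsilon_k),S(x_0,\varepsilon_0),A(x_0,\varepsilon_k,\varepsilon_0)\bigr)$, and \eqref{eqPP1} fails. Your proposed fix, ``shrinking $\varepsilon_0$ if necessary so that $|\gamma|\not\subset B(x_0,\varepsilon_0)$'', cannot work uniformly: the outer radius would then depend on the particular curve $\gamma$, so you no longer have a single minorizing family for all of $\Gamma$, and the modulus inequality \eqref{eq32*A} does not apply to $\Gamma$ as a whole.

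The paper repairs exactly this point by a countable decomposition. One fixes a sequence $r_i\downarrow 0$ and sets $\Gamma_i$ to be the family of curves in $D\setminus\{x_0\}$ that approach $x_0$ along a sequence and terminate on $S(x_0,r_i)$. Since each $\gamma\in\Gamma$ reaches some positive distance from $x_0$, it crosses $S(x_0,r_i)$ for all sufficiently large $i$, and hence has a subcurve in some $\Gamma_i$; this gives $\Gamma>\bigcup_i\Gamma_i$. For each fixed $i$ one then has the honest minorization $\Gamma_i>\Gamma\bigl(S(x_0,\varepsilon),S(x_0,r_i),A(x_0,\varepsilon,r_i)\bigr)$ for small $\varepsilon$, and the ring $Q$-mapping inequality with $\eta=\psi/I(\varepsilon,r_i)$ together with \eqref{eq4*} yields $M_{\alpha'}(f(\Gamma_i))=0$ after letting $\varepsilon\to 0$ (here one uses that $I(\varepsilon,r_i)$ and $I(\varepsilon,\varepsilon_0)$ differ by the fixed finite quantity $I(r_i,\varepsilon_0)$, so the $o$-condition transfers). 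Countable subadditivity of the modulus then gives $M_{\alpha'}(f(\Gamma))=0$. Your argument becomes correct once you insert this decomposition and the subadditivity step; the application of Lemma~\ref{lem4} and \eqref{eq32*A} that you outline is then exactly what is used on each $\Gamma_i$.
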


\medskip
In particular, \eqref{eq5C} holds provided that $\psi\in
L^1_{loc}(0, \varepsilon_0)$ satisfies the condition $\psi(t)>0$ for
almost every $t\in (0, \varepsilon_0).$

\medskip
 \begin{proof}
Note that
\begin{equation}\label{eq12*}
\Gamma > \bigcup\limits_{i=1}^\infty\,\, \Gamma_i\,,
\end{equation}
where $\Gamma_i$~ is a family of curves
$\alpha_i(t)\colon(0,1)\rightarrow D\setminus\{x_0\}$ such that
$\alpha_i(1)\in \{0<d(x, x_0)=r_i<\varepsilon_0\},$ and $r_i$ is
some sequence with $r_i\rightarrow 0$ as $i\rightarrow \infty,$ and
$\alpha_i(t_k)\rightarrow x_0$ as $k\rightarrow\infty$ for the same
sequence $t_k\rightarrow 0$ as $k\rightarrow\infty.$ Fix $i\geqslant
1.$ By~\eqref{eq5C}, $I(\varepsilon, r_i)>0$ for some
$\varepsilon_1\in (0, r_i]$ and every $\varepsilon\in(0,
\varepsilon_1).$
Now, observe that, for specified $\varepsilon>0,$ the function
$$\eta(t)=\left\{
\begin{array}{rr}
\psi(t)/I(\varepsilon, r_i), &   t\in (\varepsilon,
r_i),\\
0,  &  t\in {\Bbb R}\setminus (\varepsilon, r_i)
\end{array}
\right. $$ satisfies \eqref{eq*3!!} in
$A(x_0, \varepsilon, r_i)=\{x\in X: \varepsilon<d(x, x_0)< r_i \}.$
Since $f$ is a ring $Q$-mapping at $x_0,$ we obtain that
 \begin{multline}\label{eq11*}
M_{\alpha^{\,\prime}}\left(f\left(\Gamma\left(S(x_0,
\varepsilon),\,S(x_0,
r_i),\,A(x_0, \varepsilon, r_i)\right)\right)\right)\leqslant\\
\leqslant \int\limits_{A(x_0, \varepsilon, r_i)} Q(x)\cdot
\eta^\alpha(d(x, x_0))\ d\mu(x)\,\leqslant {\frak F}_i(\varepsilon),
 \end{multline}
where
${\frak F}_i(\varepsilon)=\,\frac{1}{\left(I(\varepsilon,
r_i)\right)^\alpha}\int\limits_{\varepsilon<d(x,
x_0)<\varepsilon_0}\,Q(x)\,\psi^{\alpha}(d(x, x_0))\,d\mu(x).$
By~\eqref{eq4*}, ${\frak F}_i(\varepsilon)\rightarrow 0$ as
$\varepsilon\rightarrow 0.$
Observe that
\begin{equation}\label{eq5*C}
\Gamma_i>\Gamma\left(S(x_0, \varepsilon),\,S(x_0, r_i),\,A(x_0,
\varepsilon, r_i)\right)
\end{equation}
for every $\varepsilon\in (0, \varepsilon_1).$ Thus,
by~\eqref{eq11*} and~\eqref{eq5*C}, we obtain that
\begin{equation}\label{eq6*}
M_{\alpha^{\,\prime}}(f(\Gamma_i))\leqslant {\frak
F}_i(\varepsilon)\rightarrow 0
\end{equation}
for every fixed $i=1,2,\ldots,$ as $\varepsilon\rightarrow 0.$
However, the left-hand side of \eqref{eq6*} does not depend on
$\varepsilon,$ that implies that
$M_{\alpha^{\,\prime}}(f(\Gamma_i))=0.$ Finally, by~\eqref{eq12*}
and subadditivity of modulus (\cite[Theorem~1(b)]{Fu}), we obtain
that $M_{\alpha^{\,\prime}}(f(\Gamma))=0.$~$\Box$
\end{proof}

\medskip
A domain $D$ is called {\it a locally linearly connected at $x_0\in
\partial D,$} if for every neighborhood $U$ of $x_0$ there exists
a ball $B(x_0, r)$ centered at $x_0$ of some radius $r$ in $U$ such
that $B(x_0, r)\cap D$ is linearly connected. The above definition
slightly differs from the standard (see \cite[I.6, $\S\,49$]{Ku}).
The following lemma provides the main tool for establishing
equicontinuity in the most general situation.

\medskip
\begin{lemma}\label{lem4*}{\sl\,
Let $G:=D\setminus\{x_0\}$ be a domain in a locally compact metric
space $(X, d, \mu)$ with a finite Hausdorff dimension
$\alpha\geqslant 2,$ where $G$ is locally linearly connected at
$x_0\in D,$ and let $(X^{\,\prime}, d^{\,\prime}, \mu^{\,\prime})$
be an Ahlfors $\alpha^{\,\prime}$-regular metric space which
supports $(1;\alpha^{\,\prime})$-Poincare inequality.

Assume that, there exists $\varepsilon_0>0$ and a Lebesgue
measurable function $\psi(t)\colon(0, \varepsilon_0)\rightarrow
[0,\infty]$ with the following property: for every
$\varepsilon_2\in(0, \varepsilon_0]$ there exists $\varepsilon_1\in
(0, \varepsilon_2],$ such that (\ref{eq5C}) holds for every
$\varepsilon\in (0,\varepsilon_1).$ Suppose also that, (\ref{eq4*})
holds as $\varepsilon\rightarrow 0.$

Let $B_R$ be a fixed ball in $X^{\,\prime}$ such that
$\overline{B_R}$ is compactum, and let $K$ be a continuum in $B_R.$
If an open discrete ring $Q$-mapping $f\colon
D\setminus\{x_0\}\rightarrow B_R\setminus K$ at $x_0$ satisfies
$\textbf{A}$-condition, then $f$ has a continuous extension to
$x_0.$
}
 \end{lemma}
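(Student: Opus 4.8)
The plan is to show that $f$ extends continuously to $x_0$ by proving that the cluster set $C(f, x_0):=\{y\in\overline{B_R}: y=\lim_{k\to\infty}f(x_k) \text{ for some } x_k\to x_0\}$ is a single point. First I would note that $C(f,x_0)\ne\varnothing$ since $\overline{B_R}$ is compact and $f$ takes values in $B_R$. Suppose, for contradiction, that there are two distinct points $y_1, y_2\in C(f,x_0)$, realized along sequences $x_k'\to x_0$ with $f(x_k')\to y_1$ and $x_k''\to x_0$ with $f(x_k'')\to y_2$. Fix a small $\varepsilon_0>0$ with $\overline{B(x_0,\varepsilon_0)}$ compact in $D$ and $\rho_0>0$ with $d^{\,\prime}(y_1,y_2)>3\rho_0$. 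Using local linear connectivity of $G=D\setminus\{x_0\}$ at $x_0$, for each large $k$ I would connect $x_k'$ and $x_k''$ inside $B(x_0,\varepsilon_k)\cap G$ (for a suitable decreasing $\varepsilon_k\to 0$) by a curve $\gamma_k$; its image $f(\gamma_k)$ is then a curve in $B_R\setminus K$ joining a point near $y_1$ to a point near $y_2$, so $\operatorname{diam} f(\gamma_k)\geqslant d^{\,\prime}(y_1,y_2)/2 > \rho_0$ for $k$ large.

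Next I would produce a lower modulus bound and an upper modulus bound that contradict each other. For the lower bound, let $E_k=f(\overline{\gamma_k})$ (a continuum of diameter $\geqslant\rho_0$) and use $K$ as the second continuum (of diameter $\geqslant$ some fixed positive number, since $K$ is a nondegenerate continuum — strictly speaking one should also assume $\operatorname{diam} K>0$, which is implicit in ``continuum'' here being used as nondegenerate). By Proposition~\ref{pr2} applied in the Ahlfors-regular Poincar\'e space $X^{\,\prime}$,
$$
M_{\alpha^{\,\prime}}(\Gamma(K, E_k, X^{\,\prime}))\geqslant \frac{1}{C}\cdot\frac{\min\{\operatorname{diam} K, \rho_0\}}{R_1}
$$
for some fixed $R_1$ with $E_k\cup K\subset B(x_0', R_1)$, a bound \emph{independent of $k$}. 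For the upper bound, I would observe that every curve in $\Gamma(K, E_k, X^{\,\prime})$ must cross the spherical shell $A(x_0,\varepsilon_k,\varepsilon_0)$ — more precisely, using the topological connectedness argument as in Lemma~\ref{lem5} (exploiting condition $\textbf{A}$ to lift curves and the fact that $|\gamma_k|\subset B(x_0,\varepsilon_k)$ while $K$ lies outside $f(B(x_0,\varepsilon_0))$ for $k$ large) one gets
$$
\Gamma(K, E_k, X^{\,\prime}) > \Gamma\big(f(\overline{B(x_0,\varepsilon_k)}),\, \partial f(B(x_0,\varepsilon_0)),\, X^{\,\prime}\big),
$$
hence by \eqref{eq32*A} and Lemma~\ref{lem5},
$$
M_{\alpha^{\,\prime}}(\Gamma(K, E_k, X^{\,\prime}))\leqslant F(\varepsilon_k,\varepsilon_0)/I^{\,\alpha}(\varepsilon_k,\varepsilon_0)\longrightarrow 0
$$
as $k\to\infty$, by the hypothesis \eqref{eq4*}. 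This contradicts the uniform lower bound, so $C(f,x_0)$ is a single point $y_0$, and setting $f(x_0):=y_0$ gives the continuous extension.

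I expect the main obstacle to be the rigorous justification of the reduction
$\Gamma(K, E_k, X^{\,\prime}) > \Gamma(f(\overline{B(x_0,\varepsilon_k)}),\,\partial f(B(x_0,\varepsilon_0)),\,X^{\,\prime})$,
i.e. showing that an arbitrary curve joining $K$ to $f(\overline{\gamma_k})$ contains a subcurve running from $f(\overline{B(x_0,\varepsilon_k)})$ to the topological boundary $\partial f(B(x_0,\varepsilon_0))$ inside $X^{\,\prime}$. This requires the same kind of lifting-plus-connectedness reasoning that appears in the proof of Lemma~\ref{lem5}: one lifts the curve via condition $\textbf{A}$, shows the maximal lift cannot terminate inside $\overline{B(x_0,\varepsilon_0)}$ (else discreteness of $f$ plus maximality is violated), and concludes the lift must reach $S(x_0,\varepsilon_0)$, so the original curve meets $\partial f(B(x_0,\varepsilon_0))$. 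A secondary technical point is choosing the radii $\varepsilon_k$ and the enclosing ball $B(x_0', R_1)\subset X^{\,\prime}$ uniformly; since $f(\overline{\gamma_k})\subset B_R$ and $K\subset B_R$ and $\overline{B_R}$ is compact, one may simply take $R_1=2R$ (say) with a fixed center, so the lower bound is genuinely $k$-independent. Once these are in place the contradiction is immediate and the extension follows.
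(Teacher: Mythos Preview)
Your overall strategy---contradiction by comparing a uniform lower modulus bound from Proposition~\ref{pr2} against an upper bound coming from the ring $Q$-mapping condition---is exactly the paper's strategy. The gap is in your lifting step. You claim that a maximal $f$-lifting $\alpha\colon[a,c)\to B(x_0,\varepsilon_0)\setminus\{x_0\}$ of a curve starting on $\gamma_k$ ``cannot terminate inside $\overline{B(x_0,\varepsilon_0)}$ \dots\ and concludes the lift must reach $S(x_0,\varepsilon_0)$.'' But the domain of $f$ is the \emph{punctured} ball, so the relevant boundary is $S(x_0,\varepsilon_0)\cup\{x_0\}$, and nothing in the discreteness-plus-maximality argument excludes $\alpha(t)\to x_0$ as $t\to c-0$. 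Consequently Lemma~\ref{lem5} is not applicable as stated (it assumes $x_0\in G$), and your minorization $\Gamma(K,E_k,X^{\,\prime})>\Gamma\bigl(f(\overline{B(x_0,\varepsilon_k)}),\partial f(B(x_0,\varepsilon_0)),X^{\,\prime}\bigr)$ does not by itself give the modulus bound you need, because curves whose liftings run to $x_0$ are not minorized by $f\bigl(\Gamma(S(x_0,\varepsilon_k),S(x_0,\varepsilon_0),A)\bigr)$.

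The paper repairs exactly this point: it splits the family $\Gamma_j^{*}$ of maximal liftings into $\Gamma_{E_{j_1}}$ (those with $\alpha(t_k)\to x_0$) and $\Gamma_{E_{j_2}}$ (those approaching $S(x_0,\varepsilon_0)$). For $\Gamma_{E_{j_2}}$ the argument you sketched works and gives $M_{\alpha^{\,\prime}}(f(\Gamma_{E_{j_2}}))\to 0$. For $\Gamma_{E_{j_1}}$ one invokes Lemma~\ref{lem3.1!}, which shows that the family of all curves in $D\setminus\{x_0\}$ accumulating at $x_0$ has image of zero $\alpha^{\,\prime}$-modulus under any ring $Q$-mapping satisfying \eqref{eq5C}--\eqref{eq4*}. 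Subadditivity of modulus then gives $M_{\alpha^{\,\prime}}(\Gamma_{f(E_j)})\to 0$, and the contradiction with the Proposition~\ref{pr2} lower bound follows. So the missing ingredient in your argument is precisely Lemma~\ref{lem3.1!} (or an equivalent device) to dispose of liftings that escape through the puncture.
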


\medskip
 \begin{proof}
Since $G=D\setminus\{x_0\}$ is locally linearly connected at $x_0\in
D,$ we can consider that $B(x_0, \varepsilon_0)\setminus\{x_0\}$ is
connected. Assume the contrary, namely that the map has no limit at
$x_0.$ Since $\overline{B_R}$ is compactum, the limit set $C(f,
x_0)$ is not empty. Thus, there exist two sequences $x_j$ and и
$x_j^{\,\prime}$ in $B(x_0,
\varepsilon_0)\setminus\left\{x_0\right\},$ $x_j\rightarrow
x_0,\quad x_j^{\,\prime}\rightarrow x_0,$ such that
$d^{\,\prime}\left(f(x_j),\,f(x_j^{\,\prime})\right)\geqslant a>0$
for all $j\in {\Bbb N}.$ Set $r_j=\max{\left\{d(x_j,
x_0),\,d(x_j^{\,\prime}, x_0)\right\}}.$ By locally linearly
connectedness of $G$ at $x_0,$ we can consider that
$\overline{B(x_0, r_j)}\setminus\left\{x_0\right\}$ is linearly
connected. Now, $x_j$ and $x_j^{\,\prime}$ can be joined by a closed
curve $C_j$ in $\overline{B(x_0, r_j)}\setminus\left\{x_0\right\}.$

Set $\Gamma_{f(E_j)}:=\Gamma(f(C_j), K, B_R).$ By Proposition
\ref{pr2}, $\Gamma_{f(E_j)}\ne \varnothing.$ Let $\Gamma_j^{\,*}$ be
the family of all maximal $f$-liftings of $\Gamma_{f(E_j)}$ starting
at $C_j,$ and lying in $B(x_0,
\varepsilon_0)\setminus\left\{x_0\right\}.$ Such the family is
well-defined because $\textbf{A}$ is satisfied.

Arguing as in the proof of Lemma \ref{lem5}, we can show that
\begin{equation}\label{eq33*!}
\Gamma_j^{\,*}\,=\,\Gamma_{E_{j_1}}\cup \Gamma_{E_{j_2}}\,,
\end{equation}
where $\Gamma_{E_{j_1}}$ is a family of all curves
$\alpha(t)\colon[a,\,c)\rightarrow B(x_0,
\varepsilon_0)\setminus\left\{x_0\right\}$ started at $C_j$ for
which $\alpha(t_k)\rightarrow x_0$ as $t_k\rightarrow c-0$ and some
sequence $t_k\in [a,\,c),$ and $\Gamma_{E_{j_2}}$ is a family of all
curves $\alpha(t)\colon[a,\,c)\rightarrow B(x_0,
\varepsilon_0)\setminus\left\{x_0\right\}$ started at $C_j$ for
which ${\rm dist}\left(\alpha(t_k),\partial B(x_0,
\varepsilon_0)\right)\rightarrow 0$ as $t_k\rightarrow c-0$ and some
sequence $t_k\in [a,\,c).$

By~\eqref{eq33*!},
\begin{equation}\label{eq34*!}
M_{\alpha^{\,\prime}}\left(\Gamma_{f(E_j)}\right)\leqslant
M_{\alpha^{\,\prime}}(f(\Gamma_{E_{j_1}}))\,+\,M_{\alpha^{\,\prime}}(f(\Gamma_{E_{j_2}}))\,.
\end{equation}
By Lemma~\ref{lem3.1!},
$M_{\alpha^{\,\prime}}(f(\Gamma_{E_{j_1}}))=0.$

From other hand, we observe that $\Gamma_{E_{j_2}}>\Gamma(S(x_0,
r_j), S(x_0, \varepsilon_0-\frac{1}{m}), A(x_0, r_j,
\varepsilon_0-\frac{1}{m}))$ for sufficiently large $m\in {\Bbb N}.$
Set
$A_{j}=\{x\in X: r_j<d(x, x_0)< \varepsilon_0-\frac{1}{m}\}$ and
$$ \eta_{j}(t)= \left\{
\begin{array}{rr}
\psi(t)/I(r_j, \varepsilon_0-\frac{1}{m}), &   t\in (r_j,\, \varepsilon_0-\frac{1}{m}),\\
0,  &  t\in {\Bbb R} \setminus (r_j,\, \varepsilon_0-\frac{1}{m}).
\end{array}
\right.$$
Now, we have that
$\int\limits_{r_j}^{\varepsilon_0-\frac{1}{m}}\,\eta_{j}(t) dt
=\,\frac{1}{I\left(r_j,
\varepsilon_0-\frac{1}{m}\right)}\int\limits_{r_j}
^{\varepsilon_0-\frac{1}{m}}\,\psi(t)dt=1. $ Now, by definition of
the ring $Q$-mapping at $x_0$ and by~\eqref{eq34*!}, we obtain that
$$M_{\alpha^{\,\prime}}(f(\Gamma_{E_{j}}))\leqslant\,\frac{1}{{I(r_j,
\varepsilon_0-\frac{1}{m})}^\alpha}\int\limits_{r_j<d(x,
x_0)<\varepsilon_0}\,Q(x)\,\psi^{\alpha}(d(x, x_0))\,d\mu(x)\,.
$$
Letting to the limit at $m\rightarrow\infty$ here, we obtain that
$$
M_{\alpha^{\,\prime}}(f(\Gamma_{E_{j}}))\leqslant\, {\mathcal
S}(r_j):=\frac{1}{{I(r_j,
\varepsilon_0)}^\alpha}\int\limits_{r_j<d(x,
x_0)<\varepsilon_0}\,Q(x)\,\psi^{\alpha}(d(x, x_0))\,d\mu(x).
$$
%
By~\eqref{eq4*},  ${\mathcal S}(r_j)\,\rightarrow\, 0$ as
$j\rightarrow \infty,$ and by~\eqref{eq34*!} we obtain that
\begin{equation}\label{eq3D}
M_{\alpha^{\,\prime}}\left(\Gamma_{f(E_j)}\right)\rightarrow
0\,,\qquad j\rightarrow\infty\,.
\end{equation}
From other hand, by Proposition~\ref{pr2}, we obtain that
 \begin{equation}\label{eq2D}
M_{\alpha^{\,\prime}}(\Gamma_{f(E_j)})\geqslant
\frac{1}{C}\cdot\frac{\min\{{\rm diam}\,f(C_j), {\rm
diam}\,K\}}{R}\geqslant\delta>0
 \end{equation}
because
$d^{\,\prime}\left(f(x_j),\,f(x_j^{\,\prime})\right)\geqslant a>0$
for all $j\in {\Bbb N}$ assumption made above. However, \eqref{eq2D}
contradicts with \eqref{eq3D}. The contradiction obtained above
proves the theorem.~$\Box$
 \end{proof}

\medskip
The following statements can be obtained from Lemma \ref{lem4*} and
Proposition \ref{pr3}.

\medskip
 \begin{theorem}\label{th4}
{\sl\, Let $G:=D\setminus\{x_0\}$ be a domain in a locally compact
metric space $(X, d, \mu)$ with a finite Hausdorff dimension
$\alpha\geqslant 2,$ where $G$ is locally linearly connected at
$x_0\in D,$ and let $(X^{\,\prime}, d^{\,\prime}, \mu^{\,\prime})$
be an Ahlfors $\alpha^{\,\prime}$-regular metric space which
supports $(1;\alpha^{\,\prime})$-Poincare inequality.

Let $B_R$ be a fixed ball in $X^{\,\prime}$ such that
$\overline{B_R}$ is compactum, and let $K$ be a continuum in $B_R.$
If an open discrete ring $Q$-mapping $f\colon
D\setminus\{x_0\}\rightarrow B_R\setminus K$ at $x_0$ satisfies
$\textbf{A}$-condition, and $Q:D\rightarrow(0, \infty)$ has $FMO$ at
$x_0,$ then $f$ has a continuous extension to $x_0.$
}
 \end{theorem}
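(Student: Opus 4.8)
The plan is to obtain Theorem~\ref{th4} as a direct specialization of Lemma~\ref{lem4*}: it suffices to exhibit a weight $\psi$ for which the hypotheses \eqref{eq5C} and \eqref{eq4*} of that lemma hold, and then to invoke the lemma verbatim, since the remaining geometric hypotheses (local compactness and finite Hausdorff dimension of $X$, locally linearly connectedness of $G=D\setminus\{x_0\}$ at $x_0$, Ahlfors regularity and the $(1;\alpha^{\,\prime})$-Poincar\'e inequality for $X^{\,\prime}$, compactness of $\overline{B_R}$, $K$ a continuum in $B_R$, and condition $\textbf{A}$ on $f$) are already present in the statement of Theorem~\ref{th4}.

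First I would fix $\varepsilon_0>0$ so small that $\overline{B(x_0,\varepsilon_0)}$ is a compactum contained in $D$ (possible because $X$ is locally compact) and, shrinking further if necessary, so that $\varepsilon_0<1$. On $(0,\varepsilon_0)$ I set $\psi(t):=\dfrac{1}{t\,\log(1/t)}$, the weight appearing in Proposition~\ref{pr3}. Then $\psi\in L^{1}_{loc}(0,\varepsilon_0)$ and $\psi(t)>0$ for every $t\in(0,\varepsilon_0)$, so the remark following Lemma~\ref{lem3.1!} gives condition \eqref{eq5C}; explicitly $I(\varepsilon,\varepsilon_2)=\log\log(1/\varepsilon)-\log\log(1/\varepsilon_2)\in(0,\infty)$ for all $\varepsilon\in(0,\varepsilon_2)$, and in particular $I(\varepsilon,\varepsilon_0)\to\infty$ as $\varepsilon\to0$.

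Next, the hypothesis $Q\in FMO(x_0)$, together with the measure growth condition \eqref{eq7} — which is at our disposal because the measure $\mu$ is doubling on any Ahlfors $\alpha$-regular source space (the setting presupposed by Proposition~\ref{pr3}), whence \eqref{eq7} holds \textit{a fortiori} owing to the slowly varying factor $\log^{\alpha-2}(1/r)$ — places us in the scope of Proposition~\ref{pr3}. That proposition yields, for the above $\psi$,
$$\int\limits_{\varepsilon<d(x,x_0)<\varepsilon_0}Q(x)\,\psi^{\alpha}(d(x,x_0))\,d\mu(x)=o\!\left(I^{\alpha}(\varepsilon,\varepsilon_0)\right),\qquad\varepsilon\to0,$$
which is precisely \eqref{eq4*}. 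With \eqref{eq5C} and \eqref{eq4*} established, Lemma~\ref{lem4*} applies to the given open discrete ring $Q$-mapping $f\colon D\setminus\{x_0\}\to B_R\setminus K$ and furnishes a continuous extension of $f$ to $x_0$, which is the assertion of the theorem.

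The step I expect to require the most care is ensuring that the measure growth condition \eqref{eq7} demanded by Proposition~\ref{pr3} is indeed available here — this is where the Ahlfors-type regularity of the source space $X$ enters (it is presupposed by the cited Proposition~\ref{pr3}, even though Theorem~\ref{th4} as stated only asks for finite Hausdorff dimension of $X$), and one must also keep $\varepsilon_0<1$ so that $\log(1/t)>0$, hence $\psi>0$, on the whole relevant range. Everything else is a routine reduction to the already-proved Lemma~\ref{lem4*}.
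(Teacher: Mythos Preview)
Your proposal is correct and follows exactly the route the paper takes: the paper's proof of Theorem~\ref{th4} consists of the single sentence that it ``can be obtained from Lemma~\ref{lem4*} and Proposition~\ref{pr3}'', and you have spelled out precisely that reduction, including the choice $\psi(t)=1/(t\log(1/t))$ and the verification of \eqref{eq5C} and \eqref{eq4*}. Your caveat about \eqref{eq7} and the implicit Ahlfors regularity of the source space is well taken---the paper itself glosses over this point---and your justification via doubling is the right way to handle it.
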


\medskip
Taking into account \cite[Corollary~4.1]{RS}, by Lemma~\ref{lem1B},
we obtain the following.

\medskip
\begin{corollary}\label{cor3}
{\sl A conclusion of Theorem~{\em\ref{th4}} holds, if instead of
condition $Q\in FMO(x_0)$ we require that
$$\limsup\limits_{\varepsilon\rightarrow 0}\frac{1}{\mu(B(x_0, \varepsilon))}\int\limits
_{B(x_0, \varepsilon)}Q(x)d\mu(x)<\infty\,.$$ }
\end{corollary}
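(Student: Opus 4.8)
The plan is to derive Corollary~\ref{cor3} from Lemma~\ref{lem4*}, following the very scheme by which Theorem~\ref{th4} was obtained from Lemma~\ref{lem4*} and Proposition~\ref{pr3}, but feeding in the bounded integral mean condition in place of the $FMO$ hypothesis. First I would fix $\varepsilon_0>0$ with $\overline{B(x_0,\varepsilon_0)}$ a compactum in $D$ and $\varepsilon_0<1,$ and take $\psi(t)\equiv\frac{1}{t\log(1/t)}$ on $(0,\varepsilon_0).$ Since $\psi\in L^1_{loc}(0,\varepsilon_0)$ and $\psi(t)>0$ for almost every $t,$ the requirement \eqref{eq5C} of Lemma~\ref{lem4*} is automatically fulfilled (see the remark following Lemma~\ref{lem3.1!}); moreover $I(\varepsilon,\varepsilon_0)=\int_\varepsilon^{\varepsilon_0}\frac{dt}{t\log(1/t)}=\log\frac{\log(1/\varepsilon)}{\log(1/\varepsilon_0)}\to\infty$ as $\varepsilon\to 0.$

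The key step is to verify the decay relation \eqref{eq4*}, namely
$$\int\limits_{\varepsilon<d(x,x_0)<\varepsilon_0}Q(x)\,\psi^{\alpha}(d(x,x_0))\,d\mu(x)=o\bigl(I^{\alpha}(\varepsilon,\varepsilon_0)\bigr),\qquad\varepsilon\to 0,$$
under the sole assumption that $\limsup_{\varepsilon\to 0}\frac{1}{\mu(B(x_0,\varepsilon))}\int_{B(x_0,\varepsilon)}Q\,d\mu<\infty.$ This is precisely the content of \cite[Corollary~4.1]{RS}, the bounded-integral-mean counterpart of Proposition~\ref{pr3}. It can also be seen directly: from $|Q-\overline Q_\varepsilon|\leqslant Q+\overline Q_\varepsilon$ one gets $\frac{1}{\mu(B(x_0,\varepsilon))}\int_{B(x_0,\varepsilon)}|Q-\overline Q_\varepsilon|\,d\mu\leqslant\frac{2}{\mu(B(x_0,\varepsilon))}\int_{B(x_0,\varepsilon)}Q\,d\mu,$ so that the hypothesis forces $Q\in FMO(x_0),$ whence Proposition~\ref{pr3} applies (the doubling-type bound \eqref{eq7} being automatic for an Ahlfors $\alpha$-regular source). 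In either case \eqref{eq3.7C} holds with $\psi_\varepsilon\equiv\psi$ and $F(\varepsilon,\varepsilon_0)=o\bigl(I^\alpha(\varepsilon,\varepsilon_0)\bigr).$

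It then remains only to invoke Lemma~\ref{lem4*}: the open discrete ring $Q$-mapping $f\colon D\setminus\{x_0\}\to B_R\setminus K$ satisfying condition $\textbf{A}$ has a continuous extension to $x_0,$ which is the assertion of Corollary~\ref{cor3}. I expect the one genuinely non-routine ingredient to be the estimate \eqref{eq4*}: carried out in detail it rests on a dyadic decomposition of the annulus into shells $A_j=\{2^{-j-1}\varepsilon_0\leqslant d(x,x_0)<2^{-j}\varepsilon_0\},$ on which $\psi(d(x,x_0))\asymp\frac{2^{j}}{\varepsilon_0\, j}$ and, by the bounded mean together with the Ahlfors regularity of the source, $\int_{A_j}Q\,d\mu\leqslant\int_{B(x_0,2^{-j}\varepsilon_0)}Q\,d\mu\lesssim\mu(B(x_0,2^{-j}\varepsilon_0))\asymp(2^{-j}\varepsilon_0)^{\alpha};$ hence the $j$-th shell contributes $\lesssim j^{-\alpha},$ the sum over $j$ is bounded, and this is $o\bigl(I^{\alpha}(\varepsilon,\varepsilon_0)\bigr)$ since $I(\varepsilon,\varepsilon_0)\to\infty.$ All remaining steps are direct appeals to results already established above.
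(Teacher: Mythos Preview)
Your approach is essentially the paper's own: invoke \cite[Corollary~4.1]{RS} to supply the decay estimate \eqref{eq4*} (equivalently, observe that the bounded integral mean forces $Q\in FMO(x_0)$ and appeal to Proposition~\ref{pr3}), then apply Lemma~\ref{lem4*}. The paper's one-line justification actually cites Lemma~\ref{lem1B}, which appears to be a typographical slip for Lemma~\ref{lem4*}; you have identified the correct lemma, and your added dyadic sketch simply unpacks what \cite[Corollary~4.1]{RS} delivers.
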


\medskip
The following results complement \cite[Theorem~10.2]{RS}.

\medskip
 \begin{theorem}\label{th5}
{\sl\, Let $G:=D\setminus\{x_0\}$ be a domain in a locally compact
metric space $(X, d, \mu)$ with a finite Hausdorff dimension
$\alpha\geqslant 2,$ where $G$ is locally linearly connected at
$x_0\in D,$ and let $(X^{\,\prime}, d^{\,\prime}, \mu^{\,\prime})$
be an Ahlfors $\alpha^{\,\prime}$-regular metric space which
supports $(1;\alpha^{\,\prime})$-Poincare inequality.

Let $B_R$ be a fixed ball in $X^{\,\prime}$ such that
$\overline{B_R}$ is compactum, and let $K$ be a continuum in $B_R.$
If $f\colon D\setminus\{x_0\}\rightarrow B_R\setminus K$ is a ring
$Q$-homeomorphism at $x_0,$ and $Q:D\rightarrow(0, \infty)$ has
$FMO$ at $x_0,$ or $\limsup\limits_{\varepsilon\rightarrow
0}\frac{1}{\mu(B(x_0, \varepsilon))}\int\limits _{B(x_0,
\varepsilon)}Q(x)d\mu(x)<\infty,$ then $f$ has a continuous
extension to $x_0.$
}
 \end{theorem}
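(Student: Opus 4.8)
\textbf{Proof proposal for Theorem~\ref{th5}.}
The plan is to derive Theorem~\ref{th5} from the already-established machinery rather than to repeat the lifting argument. A ring $Q$-homeomorphism $f\colon D\setminus\{x_0\}\to B_R\setminus K$ is in particular an open discrete mapping, so the only thing that must be checked before invoking Lemma~\ref{lem4*} (or Theorem~\ref{th4}) is that condition $\textbf{A}$ — the existence of a maximal $f$-lifting for every curve $\beta$ and every preimage of its initial point — holds automatically for homeomorphisms. This is the key observation: if $f$ is a homeomorphism onto its image, then for any curve $\beta\colon[a,b)\to f(D\setminus\{x_0\})$ the curve $\alpha:=f^{-1}\circ\beta$ is a well-defined continuous curve in $D\setminus\{x_0\}$ with $f\circ\alpha=\beta$, so $\alpha$ is already a lifting defined on the full parameter interval $[a,b)$; maximality in the sense of the definition is then trivial since no proper extension of $[a,b)$ is available. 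Hence every ring $Q$-homeomorphism satisfies $\textbf{A}$, and the hypotheses of Theorem~\ref{th4} (respectively Corollary~\ref{cor3}) are met verbatim.

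First I would note that the geometric hypotheses of Theorem~\ref{th5} coincide with those of Theorem~\ref{th4}: $G=D\setminus\{x_0\}$ is a domain in a locally compact metric space $(X,d,\mu)$ of finite Hausdorff dimension $\alpha\geqslant 2$, locally linearly connected at $x_0$; $(X^{\,\prime},d^{\,\prime},\mu^{\,\prime})$ is Ahlfors $\alpha^{\,\prime}$-regular and supports a $(1;\alpha^{\,\prime})$-Poincar\'e inequality; $\overline{B_R}$ is a compactum; and $K\subset B_R$ is a continuum. Next, since $Q\in FMO(x_0)$ (the first alternative), Proposition~\ref{pr3} applies with $\psi(t)=\tfrac{1}{t\log(1/t)}$ and furnishes exactly the integral estimate \eqref{eq4*} with $o\!\left(I^{\alpha}(\varepsilon,\varepsilon_0)\right)$ behaviour, while \eqref{eq5C} holds for this $\psi$ because $\psi\in L^1_{loc}(0,\varepsilon_0)$ and $\psi>0$ a.e. (the remark following Lemma~\ref{lem4*}). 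In the second alternative, where only the bounded-mean condition on $Q$ is assumed, one invokes \cite[Corollary~4.1]{RS} in place of Proposition~\ref{pr3}, exactly as in Corollary~\ref{cor3}, to obtain the same conclusion \eqref{eq4*}.

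Having verified $\textbf{A}$ and the required modulus/integral hypotheses, I would apply Lemma~\ref{lem4*} directly: it yields that $f$ has a continuous extension to $x_0$. This completes the proof. The only nontrivial point is the first step — recognizing that homeomorphisms satisfy condition~$\textbf{A}$ with no extra argument — and even that is essentially immediate; the rest is a direct citation of Theorem~\ref{th4}, Corollary~\ref{cor3}, Proposition~\ref{pr3}, and \cite[Corollary~4.1]{RS}. Thus I do not expect any genuine obstacle; the statement is a corollary of the open-discrete case together with the triviality that injective maps lift curves globally.~$\Box$
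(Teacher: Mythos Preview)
Your reduction is exactly what the paper intends: Theorem~\ref{th5} is stated in the paper without proof (merely introduced as a complement to \cite[Theorem~10.2]{RS}), and the evident route is the one you take --- a homeomorphism is open and discrete and satisfies condition~$\textbf{A}$, so Theorem~\ref{th4} and Corollary~\ref{cor3} (equivalently Lemma~\ref{lem4*} together with Proposition~\ref{pr3} and \cite[Corollary~4.1]{RS}) apply verbatim.

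There is one imprecision in your verification of~$\textbf{A}$. The condition asks for a maximal lifting of \emph{every} curve $\beta\colon[a,b)\to X^{\,\prime}$ with $\beta(a)\in f(G)$, not only of curves whose image lies entirely in $f(G)$; and in the proof of Lemma~\ref{lem4*} the curves being lifted belong to $\Gamma(f(C_j),K,B_R)$ and hence terminate in $K\subset X^{\,\prime}\setminus f(G)$, so they do leave the image. The fix is immediate: since $f(G)$ is open in $X^{\,\prime}$, put
\[
c=\sup\{t\in[a,b):\beta([a,s])\subset f(G)\ \text{for all}\ s<t\}
\]
and set $\alpha:=f^{-1}\circ\beta|_{[a,c)}$. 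This $\alpha$ is a lifting starting at the prescribed preimage (unique, since $f$ is injective), and it is maximal because any lifting $\alpha'\colon[a,c')\to G$ with $c'>c$ would force $\beta([a,c'))=f\circ\alpha'([a,c'))\subset f(G)$, contradicting the choice of $c$. With this adjustment your argument is complete and matches the paper's implicit proof.
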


\section{Boundary behavior}

Let $G$ and $G^{\,\prime}$ be domains with finite Hausdorff
dimensions $\alpha$ and $\alpha^{\,\prime}\geqslant 1$ in spaces
$(X,d,\mu)$ and $(X^{\,\prime},d^{\,\prime}, \mu^{\,\prime}),$ and
let $Q:G\rightarrow[0,\infty]$ be a measurable function. Following
to \cite{Sm}, we say that a mapping $f:G\rightarrow G^{\,\prime}$ is
a ring $Q$-mapping at a point $x_0\in \partial G$ if the inequality
%
$$M_{{\,\alpha}^{\,\prime}}(f(\Gamma(C_1, C_0,
A))\leqslant\int\limits_{A\cap G}Q(x)\eta^{\alpha}(d(x,
x_0))d\mu(x)$$
%
holds for any ring
%
$$A=A(x_0, r_1, r_2)=\{x\in X: r_1<d(x, x_0)<r_2\}, \quad 0 < r_1 <
r_2 <\infty\,,$$
%
and any two continua $C_0\subset \overline{B(x_0, r_1)},$
$C_1\subset X\setminus B(x_0, r_2),$ and any measurable function
$\eta:(r_1, r_2)\rightarrow [0, \infty]$ such that (\ref{eq*3!!})
holds.

\medskip
We say that the boundary of the domain $G$ is {\it strongly
accessible at a point $x_0\in
\partial G$}, if, for every neighborhood  $U$ of the point $x_0$,
there is a compact set $E\subset G$, a neighborhood $V\subset U$ of
the point $x_0$ and a number $\delta
>0$ such that $$M_{\alpha}(\Gamma(E, F, G))\geqslant \delta$$ for every continuum
$F$ in $G$ intersecting $\partial U$ and $\partial V.$ We say that
the boundary $\partial G$  is {\it strongly accessible}, if the
corresponding property holds at every point of the boundary. The
following lemma holds.

\medskip
\begin{lemma}\label{lem1A} {\sl\, Let $D$ be a domain in a 
metric space $(X, d, \mu)$ with a finite Hausdorff dimension
$\alpha\geqslant 2,$ $\overline{D}$ is a compact, and let
$(X^{\,\prime}, d^{\,\prime}, \mu^{\,\prime})$ be a metric space
with a finite Hausdorff dimension $\alpha^{\,\prime}\geqslant 2.$
Let $f:D\rightarrow X^{\,\prime}$ be an open discrete ring
$Q$-mapping at $b\in
\partial D,$ $f(D)=D^{\,\prime},$ $D$ is locally linearly connected at $b,$ $C(f,
\partial D)\subset \partial D^{\,\prime},$ and $D^{\,\prime}$ is
strongly accessible at least at one point $y\in C(f, b).$ Assume
that
\begin{equation}\label{eq7***}
0<I(\varepsilon,
\varepsilon_0)=\int\limits_{\varepsilon}^{\varepsilon_0}\psi(t)dt <
\infty
\end{equation}
for every $\varepsilon\in(0, \varepsilon_0),$ for some
$\varepsilon_0>0,$ and for some nonnegative Lebesgue measurable
function $\psi(t),$ $\psi:(0, \varepsilon_0)\rightarrow (0,\infty).$
Assume that
\begin{equation}\label{eq5***}
\int\limits_{A(b, \varepsilon, \varepsilon_0)}
Q(x)\cdot\psi^{\,\alpha}(d(x, b))
 \ d\mu(x) =o(I^{\alpha}(\varepsilon, \varepsilon_0))\,,
\end{equation}
where $A:=A(b, \varepsilon, \varepsilon_0)$ is define in
(\ref{eq2A}). If $f$ satisfies $\textbf{A}$-condition, then $C(f,
b)=\{y\}.$ }
\end{lemma}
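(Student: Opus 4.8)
The assertion is that the cluster set $C(f,b)$ at the boundary point $b$ is the single point $y$. Since $\overline{D^{\,\prime}}$ sits inside the complete space $X^{\,\prime}$ and $\overline D$ is compact, $C(f,b)$ is nonempty and closed; so it suffices to rule out that $C(f,b)$ contains a second point. I would argue by contradiction: suppose there is $z\in C(f,b)$ with $z\ne y$, and pick sequences $x_k\to b$, $x_k^{\,\prime}\to b$ in $D$ with $f(x_k)\to y$ and $f(x_k^{\,\prime})\to z$, so that $d^{\,\prime}(f(x_k),f(x_k^{\,\prime}))\geqslant a>0$ for all large $k$. Set $r_k=\max\{d(x_k,b),d(x_k^{\,\prime},b)\}\to 0$. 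Using that $D$ is locally linearly connected at $b$, join $x_k$ and $x_k^{\,\prime}$ by a curve $C_k$ lying in $\overline{B(b,r_k)}\cap D$. The images $f(C_k)$ are continua in $D^{\,\prime}$ with $\operatorname{diam} f(C_k)\geqslant a$, and since $r_k\to 0$ we may assume (passing to a subsequence) that $f(C_k)$ meets a small neighborhood $V$ of $y$ while also exiting a fixed neighborhood $U$ of $y$ (because its diameter is bounded below while one endpoint tends to $y\in C(f,b)$ — here one must be slightly careful, using that $y$ is a cluster point along the first sequence so that $f(C_k)$ accumulates at $y$).

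\medskip
Now invoke strong accessibility of $D^{\,\prime}$ at $y$: there is a compactum $E\subset D^{\,\prime}$, a neighborhood $V\subset U$ of $y$, and $\delta>0$ such that $M_{\alpha^{\,\prime}}(\Gamma(E,f(C_k),D^{\,\prime}))\geqslant\delta$ for all large $k$, since each $f(C_k)$ meets both $\partial U$ and $\partial V$. On the other hand, I would bound this modulus from above and show it tends to $0$. For this, fix a ball $B(b,\varepsilon_0)$ as in the hypotheses with $\overline{B(b,\varepsilon_0)}\cap D$ compact, and note $E\subset f(B(b,\varepsilon_0))$ fails in general — instead one works with the preimage structure: let $\Gamma_k^{\,*}$ be the family of maximal $f$-liftings of $\Gamma(E,f(C_k),D^{\,\prime})$ starting on $C_k$, which exist by condition $\textbf{A}$. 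Exactly as in the proof of Lemma~\ref{lem4*}, each such lifting $\alpha$ either has $\alpha(t_j)\to b$ along some $t_j\to c-0$, or else $\operatorname{dist}(\alpha(t_j),\partial B(b,\varepsilon_0))\to 0$; call these subfamilies $\Gamma_{k,1}^{\,*}$ and $\Gamma_{k,2}^{\,*}$. By Lemma~\ref{lem3.1!} (applied with $G=D\setminus\{b\}$, whose role here is played by $D$ near $b$; more precisely one uses that curves through $b$ have zero image-modulus), $M_{\alpha^{\,\prime}}(f(\Gamma_{k,1}^{\,*}))=0$. For $\Gamma_{k,2}^{\,*}$, observe $\Gamma_{k,2}^{\,*}>\Gamma\bigl(S(b,r_k),S(b,\varepsilon_0-\tfrac1m),A(b,r_k,\varepsilon_0-\tfrac1m)\bigr)$ for large $m$; feeding the admissible function $\eta_k(t)=\psi(t)/I(r_k,\varepsilon_0-\tfrac1m)\cdot\chi_{(r_k,\varepsilon_0-1/m)}(t)$ into the ring $Q$-inequality and letting $m\to\infty$ gives
$$
M_{\alpha^{\,\prime}}(f(\Gamma_{k,2}^{\,*}))\leqslant \frac{1}{I^{\alpha}(r_k,\varepsilon_0)}\int\limits_{r_k<d(x,b)<\varepsilon_0}Q(x)\,\psi^{\alpha}(d(x,b))\,d\mu(x),
$$
which, by the hypothesis \eqref{eq5***}, tends to $0$ as $k\to\infty$ (since $r_k\to 0$). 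Since $\Gamma(E,f(C_k),D^{\,\prime})>f(\Gamma_k^{\,*})$, subadditivity of the modulus (\cite[Theorem~1(b)]{Fu}) and \eqref{eq32*A} give $M_{\alpha^{\,\prime}}(\Gamma(E,f(C_k),D^{\,\prime}))\to 0$, contradicting the lower bound $\delta>0$.

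\medskip
\textbf{Main obstacle.} The delicate point is not the modulus estimate — that is a direct adaptation of Lemma~\ref{lem4*} — but the topological setup: verifying that the continua $f(C_k)$ genuinely intersect both $\partial U$ and $\partial V$ for the neighborhoods $V\subset U$ furnished by strong accessibility, i.e. that $f(C_k)$ does not eventually stay inside $V$ nor escape before reaching a fixed $U$. This requires using that $y\in C(f,b)$ \emph{along} the sequence producing the $C_k$'s (so that $f(C_k)$ has points arbitrarily close to $y$, hence inside $V$) together with $\operatorname{diam} f(C_k)\geqslant a$ and $a$ being larger than $\operatorname{diam} U$ for $U$ chosen small enough (so $f(C_k)$ must leave $U$). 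One also has to make sure the liftings stay in $B(b,\varepsilon_0)\setminus\{b\}$ where $f$ is defined, which is why one chooses $\varepsilon_0$ with $\overline{B(b,\varepsilon_0)}\cap D$ compact and uses the local linear connectivity to keep $C_k$ inside. Once these book-keeping points are settled, the proof closes by the contradiction between $\delta>0$ and $M_{\alpha^{\,\prime}}(\Gamma(E,f(C_k),D^{\,\prime}))\to 0$.~$\Box$
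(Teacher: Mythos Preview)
Your overall architecture --- contradiction via a strong-accessibility lower bound for $M_{\alpha'}(\Gamma(E,f(C_k),D'))$ against a ring-$Q$ upper bound tending to zero --- is the same as the paper's. The gap is in the lifting analysis. You assert, ``exactly as in the proof of Lemma~\ref{lem4*}'', that every maximal lifting either accumulates at $b$ or approaches $S(b,\varepsilon_0)$. But the analogy with the isolated-singularity lemma fails here in two ways. First, in Lemma~\ref{lem4*} the relevant local domain is $B(x_0,\varepsilon_0)\setminus\{x_0\}$, whose topological boundary is exactly $\{x_0\}\cup S(x_0,\varepsilon_0)$; here $b\in\partial D$, and $\partial\bigl(B(b,\varepsilon_0)\cap D\bigr)$ contains all of $\partial D\cap\overline{B(b,\varepsilon_0)}$, not only $\{b\}$, so a lifting could escape to some other boundary point and your dichotomy is not exhaustive. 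Second, in Lemma~\ref{lem4*} the target continuum $K$ is \emph{omitted} by $f$, so a lifting cannot terminate at a preimage of $K$; here $E\subset D'=f(D)$, so $f^{-1}(E)\ne\varnothing$ and a lifting may well extend all the way to a point of $f^{-1}(E)$ lying inside $B(b,\varepsilon_0)$ without ever crossing $S(b,\varepsilon_0)$. Lemma~\ref{lem3.1!} only handles curves tending to the single point $b$, so it cannot cover either of these cases.

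The hypothesis you never exploit in the lifting step is precisely the one that closes the gap: $C(f,\partial D)\subset\partial D'$. The paper uses it directly. If a lifting $\alpha:[a,c)\to D$ had $\alpha(t_j)\to p\in\partial D$ for some $t_j\to c$, then $f(\alpha(t_j))=\beta(t_j)$ would accumulate in $C(f,p)\subset\partial D'$, while $\beta(t_j)$ converges to a point of $\overline{|\beta|}\subset D'$ --- a contradiction. So \emph{no} lifting approaches $\partial D$ at all; your family $\Gamma_{k,1}^*$ is empty and the detour through Lemma~\ref{lem3.1!} is unnecessary. The same cluster-set hypothesis forces $C_0:=f^{-1}(E)$ to stay away from $\partial D$, so after shrinking $\varepsilon_0$ one has $C_0\cap\overline{B(b,\varepsilon_0)}=\varnothing$. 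Then the usual compactness/discreteness argument (as in Lemma~\ref{lem5}) shows every lifting extends to a full lifting ending in $C_0$; embedding $C_0$ in a continuum $C_1\subset D\setminus B(b,\varepsilon_0)$, one obtains curves joining the continuum $\gamma_k\subset\overline{B(b,r_k)}$ to the continuum $C_1\subset X\setminus B(b,\varepsilon_0)$, which is exactly the configuration in the boundary ring-$Q$ definition. The upper bound then follows in one stroke. This is the paper's route, and it is both simpler than your two-family split and the natural repair of your argument.
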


\medskip
\begin{proof}
Assume the contrary. Now, there exist two sequences $x_i,$
$x_i^{\,\prime}\in D,$ $i=1,2,\ldots,$ obeying $x_i\rightarrow b,$
$x_i^{\,\prime}\rightarrow b$ as $i\rightarrow \infty,$
$f(x_i)\rightarrow y,$ $f(x_i^{\,\prime})\rightarrow y^{\,\prime}$
as $i\rightarrow \infty$ и $y^{\,\prime}\ne y.$ Observe that $y$ and
$y^{\,\prime}\in
\partial D^{\,\prime},$ because $C(f,
\partial D)\subset \partial D^{\,\prime}$ by assumption of Lemma. By a definition
of strong accessibility of a boundary at $y\in
\partial D^{\,\prime},$ for every neighborhood $U$ of $y,$ there exists
a compact $C_0^{\,\prime}\subset D^{\,\prime},$ a neighborhood $V$
of $y,$ $V\subset U,$ and $\delta>0$ such that
\begin{equation}\label{eq1A}
M_{\alpha^{\,\prime}}(\Gamma(C_0^{\,\prime}, F, D^{\,\prime}))\ge
\delta
>0
\end{equation} for every compact
$F,$ intersecting $\partial U$ and $\partial V.$ By the assumption
$C(f,
\partial D)\subset \partial D^{\,\prime},$ $C_0\cap \partial
D=\varnothing$ for $C_0:=f^{\,-1}(C_0^{\,\prime}).$ Without loss of
generalization, $C_0\cap\overline{B(b, \varepsilon_0)}=\varnothing.$
Since $D$ is locally linearly connected at $b,$ we can join $x_i$
and $x_i^{\,\prime}$ by a curve $\gamma_i,$ which lies in
$\overline{B(b, 2^{\,-i})}\cap D.$ Since $f(x_i)\in V$ and
$f(x_i^{\,\prime})\in D\setminus \overline{U}$ for sufficiently
large $i\in {\Bbb N},$ by (\ref{eq1A}), there exists $i_0\in {\Bbb
N}$ such that
\begin{equation}\label{eq2C}
M_{\alpha^{\,\prime}}(\Gamma(C_0^{\,\prime}, f(\gamma_i),
D^{\,\prime}))\ge \delta
>0
\end{equation}
for every $i\ge i_0\in {\Bbb N}.$ Given $i\in {\Bbb N},$ $i\ge i_0,$
consider a family $\Gamma_i^{\,\prime}$ of maximal $f$-liftings
$\alpha_i(t):[a, c)\rightarrow D$ of $\Gamma(C_0^{\,\prime},
f(\gamma_i), D^{\,\prime})$ started at $\gamma_i.$ (Such a family
exists by condition $\textbf{A}$). Since $C(f,
\partial D)\subset \partial D^{\,\prime},$ we conclude that
$\alpha_i(t)\in \Gamma_i^{\,\prime},$ $\gamma_i:[a, c)\rightarrow
D,$ does not tend to the boundary of $D$ as $t\rightarrow c-0.$ Now
$C(\alpha_i(t), c)\subset D.$ Since $\overline{D}$ is a compact,
$C(\alpha_i(t), c)\ne \varnothing.$

Assume that $\alpha_i(t)$ has no limit at $t\rightarrow c-0.$ We
show that $C(\alpha_i(t), c)$ is a continuum in $D.$ In fact,
$C(\alpha_i(t),
c)=\bigcap\limits_{k\,=\,1}^{\infty}\,\overline{\alpha\left(\left[t_k,\,c\right)\right)},$
where $t_k$ is increasing. By Cantor condition on the compact
$\overline{\alpha},$ by monotonicity of
$\alpha\left(\left[t_k,\,c\right)\right),$
$$G\,=\,\bigcap\limits_{k\,=\,1}^{\infty}\,\overline{\alpha\left(\left[t_k,\,c\right)\right)}
\ne\varnothing\,,
$$
%
see \cite[1.II.4, $\S\,41$]{Ku}.
Now, $G$ is connected as an intersection of countable collection of
decreasing continua (see \cite[Therorem~5, \S\,47(II)]{Ku}).

So, $C(\alpha_i(t), c)$ is a continuum in $D.$ By continuity of $f,$
we obtain that $f\equiv const$ on $C(\alpha_i(t), c),$ which
contradicts with discreteness of $f.$

Now, $\exists \lim\limits_{t\rightarrow c-0}\alpha_i(t)=A_i\in D,$
and $c=b.$ Now, we have that $\lim\limits_{t\rightarrow
b-0}\alpha_i(t):=A_i,$ and, simultaneously, by continuity of $f$ in
$D,$
$$f(A_i)=\lim\limits_{t\rightarrow b-0}f(\alpha_i(t))=\lim\limits_{t\rightarrow b-0}
\beta_i(t)=B_i\in C_0^{\,\prime}\,.$$ It follows from the definition
of $C_0$ that $A_i\in C_0.$ We can immerse $C_0$ into some continuum
$C_1\subset D,$ see \cite[Lemma~1]{Sm}. We can consider that
$C_1\cap\overline{B(b, \varepsilon_0)}=\varnothing$ by decreasing of
$\varepsilon_0>0.$ Putting $I(\varepsilon,
\varepsilon_0):=\int\limits_{\varepsilon}^{\varepsilon_0}\psi(t)dt$
we observe that the function
$$\eta(t)=\left\{
\begin{array}{rr}
\psi(t)/I(2^{-i}, \varepsilon_0), &   t\in (2^{-i},
\varepsilon_0),\\
0,  &  t\in {\Bbb R}\setminus (2^{-i}, \varepsilon_0)\,,
\end{array}
\right. $$ satisfies (\ref{eq*3!!}) at $r_1:=2^{-i},$
$r_2:=\varepsilon_0.$ Now, by (\ref{eq7***})--(\ref{eq5***}) and
definition of the ring $Q$-mapping at the boundary point,
\begin{equation}\label{eq11*A}
M_{\alpha^{\,\prime}}\left(f\left(\Gamma_i^{\,\prime}\right)\right)\le
\Delta(i)\,,
\end{equation}
where $\Delta(i)\rightarrow 0$ as $i\rightarrow \infty.$ However,
$\Gamma(C_0^{\,\prime}, F, D^{\,\prime})=f(\Gamma_i^{\,\prime}),$
and by (\ref{eq11*A}) we obtain that
\begin{equation}\label{eq3F}
M_{\alpha^{\,\prime}}(\Gamma(C_0^{\,\prime}, F, D^{\,\prime}))=
M_{\alpha^{\,\prime}}\left(f(\Gamma_i^{\,\prime})\right)\le
\Delta(i)\rightarrow 0
\end{equation}
as $i\rightarrow \infty.$ However, (\ref{eq3F}) contradicts with
(\ref{eq2C}). Lemma is proved.~$\Box$
\end{proof}

\medskip
The following statements can be obtained from Lemma \ref{lem1A},
Proposition \ref{pr3} and \cite[Corollary~4.1]{RS}.

\medskip
\begin{theorem}\label{th2} {\sl\, Let $D$ be a domain in a
metric space $(X, d, \mu)$ with locally finite Borel measure $\mu$
and finite Hausdorff dimension $\alpha\geqslant 2,$ $\overline{D}$
is a compact, and let $(X^{\,\prime}, d^{\,\prime}, \mu^{\,\prime})$
be a metric space with locally finite Borel measure $\mu^{\,\prime}$
and finite Hausdorff dimension $\alpha^{\,\prime}\geqslant 2.$ Let
$f:D\rightarrow X^{\,\prime}$ be an open discrete ring $Q$-mapping
at $b\in
\partial D,$ $f(D)=D^{\,\prime},$ $D$ is locally linearly connected at $b,$ $C(f,
\partial D)\subset \partial D^{\,\prime},$ and $D^{\,\prime}$ is
strongly accessible at least at one point $y\in C(f, b).$ Assume
that $Q\in FMO(b)$ and, simultaneously, $Q$ obeying (\ref{eq7}) at
$b.$ If $f$ satisfies $\textbf{A}$-condition, then $C(f, b)=\{y\}.$
}
\end{theorem}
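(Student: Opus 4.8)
The plan is to deduce this theorem directly from Lemma~\ref{lem1A} together with Proposition~\ref{pr3} and \cite[Corollary~4.1]{RS}, exactly in the spirit of how Theorem~\ref{th1} was obtained from Lemma~\ref{lem1B} and Proposition~\ref{pr3}. The point is that Lemma~\ref{lem1A} gives the conclusion $C(f,b)=\{y\}$ once we know that the integral estimate \eqref{eq5***} holds for a suitable Lebesgue measurable weight $\psi$ satisfying \eqref{eq7***}; so the whole task reduces to verifying that the $FMO$ hypothesis on $Q$ at $b$ produces such a $\psi$.

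First I would fix $b\in\partial D$ and invoke Proposition~\ref{pr3}, which applies because $(X,d,\mu)$ is Ahlfors $\alpha$-regular (here one notes that Ahlfors regularity combined with the growth estimate \eqref{eq7} is exactly what Proposition~\ref{pr3} requires, and \eqref{eq7} is part of the hypotheses of the theorem). Taking the standard choice $\psi(t)\equiv\psi_\varepsilon(t):=\dfrac{1}{t\log\frac1t}$ furnished by Proposition~\ref{pr3}, one checks that $\psi$ is positive on $(0,\varepsilon_0)$ for $\varepsilon_0$ small, hence $0<I(\varepsilon,\varepsilon_0)=\int_\varepsilon^{\varepsilon_0}\psi(t)\,dt=\log\frac{\log(1/\varepsilon)}{\log(1/\varepsilon_0)}<\infty$ for every $\varepsilon\in(0,\varepsilon_0)$, so \eqref{eq7***} holds. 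Proposition~\ref{pr3} simultaneously yields that $F(\varepsilon,\varepsilon_0)=o(I^{\alpha}(\varepsilon,\varepsilon_0))$ as $\varepsilon\to0$, where $F(\varepsilon,\varepsilon_0)$ is the left-hand side of \eqref{eq3.7B}; rewriting this for the annulus $A(b,\varepsilon,\varepsilon_0)$ gives precisely \eqref{eq5***}.

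With \eqref{eq7***} and \eqref{eq5***} in hand, and since by hypothesis $\overline D$ is compact, $D$ is locally linearly connected at $b$, $C(f,\partial D)\subset\partial D^{\,\prime}$, $D^{\,\prime}$ is strongly accessible at some $y\in C(f,b)$, and $f$ satisfies condition $\textbf{A}$, all the hypotheses of Lemma~\ref{lem1A} are met. Applying that lemma gives $C(f,b)=\{y\}$, which is the assertion. For the alternative hypothesis $\limsup_{\varepsilon\to0}\frac1{\mu(B(b,\varepsilon))}\int_{B(b,\varepsilon)}Q\,d\mu<\infty$ one argues identically, using \cite[Corollary~4.1]{RS} in place of Proposition~\ref{pr3} to produce the same weight $\psi$ and the estimate \eqref{eq5***}, and then again quotes Lemma~\ref{lem1A}.

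The only genuinely delicate point is bookkeeping: one must make sure the weight $\psi$ supplied by Proposition~\ref{pr3} (which is phrased for an interior point $x_0$ and the condition \eqref{eq3.7B}) transfers verbatim to the boundary point $b$ and to the annular integral in \eqref{eq5***}, and that the growth condition \eqref{eq7} of the theorem is exactly the hypothesis Proposition~\ref{pr3} needs. Since \eqref{eq7} appears literally among the assumptions and $\psi$ depends only on the radius $t$, not on the basepoint being interior, this transfer is immediate; there is no substantive obstacle beyond matching notation, so the proof is a short citation-chaining argument.
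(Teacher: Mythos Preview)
Your proposal is correct and matches the paper's own argument exactly: the paper states that Theorem~\ref{th2} ``can be obtained from Lemma~\ref{lem1A}, Proposition~\ref{pr3} and \cite[Corollary~4.1]{RS}'' and gives no further details, which is precisely the citation-chaining you carry out (with the explicit choice $\psi(t)=1/(t\log(1/t))$ and the verification of \eqref{eq7***}--\eqref{eq5***}). Your remark that the alternative $\limsup$ hypothesis is handled via \cite[Corollary~4.1]{RS} in place of Proposition~\ref{pr3} is also how the paper treats the companion Theorem~\ref{th3}.
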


\medskip
\begin{theorem}\label{th3} {\sl\, Let $D$ be a domain in a 
metric space $(X, d, \mu)$ with locally finite Borel measure $\mu$
and finite Hausdorff dimension $\alpha\geqslant 2,$ $\overline{D}$
is a compact, and let $(X^{\,\prime}, d^{\,\prime}, \mu^{\,\prime})$
be a metric space with locally finite Borel measure $\mu^{\,\prime}$
and finite Hausdorff dimension $\alpha^{\,\prime}\geqslant 2.$ Let
$f:D\rightarrow X^{\,\prime}$ be an open discrete ring $Q$-mapping
at $b\in
\partial D,$ $f(D)=D^{\,\prime},$ $D$ is locally linearly connected at $b,$ $C(f,
\partial D)\subset \partial D^{\,\prime},$ and $D^{\,\prime}$ is
strongly accessible at least at one point $y\in C(f, b).$ Assume
that $\limsup\limits_{\varepsilon\rightarrow 0}\frac{1}{\mu(B(x_0,
\varepsilon))}\int\limits _{B(b, \varepsilon)}Q(x)d\mu(x)<\infty$
and, simultaneously, $Q$ obeying (\ref{eq7}) at $b.$ If $f$
satisfies $\textbf{A}$-condition, then $C(f, b)=\{y\}.$ }
\end{theorem}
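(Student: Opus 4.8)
The plan is to deduce Theorem~\ref{th3} from Lemma~\ref{lem1A} exactly as Theorem~\ref{th2} is deduced, replacing the $FMO$ hypothesis by the stated $\limsup$ condition on the mean value of $Q$. The only thing that needs to be supplied is a suitable family $\{\psi_\varepsilon\}$ (here independent of $\varepsilon$) and a function $F(\varepsilon,\varepsilon_0)$ for which the integral bound \eqref{eq5***} holds with $F(\varepsilon,\varepsilon_0)=o(I^\alpha(\varepsilon,\varepsilon_0))$. Since the paper has already recorded (via \cite[Corollary~4.1]{RS}) that the condition
$$\limsup\limits_{\varepsilon\rightarrow 0}\frac{1}{\mu(B(b,\varepsilon))}\int\limits_{B(b,\varepsilon)}Q(x)\,d\mu(x)<\infty$$
implies the same conclusion as $Q\in FMO(b)$ in Proposition~\ref{pr3}, the natural route is: first invoke this corollary to obtain that $Q$ satisfies \eqref{eq4*}/\eqref{eq5***} with $\psi(t)\equiv\psi_\varepsilon(t):=\frac{1}{t\log(1/t)}$, provided the geometric growth condition \eqref{eq7} holds at $b$; then feed this directly into Lemma~\ref{lem1A}.

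Concretely I would argue as follows. First I would observe that under \eqref{eq7} at $b$ together with the $\limsup$ bound above, the hypothesis \cite[Corollary~4.1]{RS} applies in the Ahlfors $\alpha$-regular setting, yielding that with $\psi(t)=\frac{1}{t\log(1/t)}$ one has $0<I(\varepsilon,\varepsilon_0)=\int_\varepsilon^{\varepsilon_0}\psi(t)\,dt<\infty$ for all small $\varepsilon$ and, moreover,
$$\int\limits_{A(b,\varepsilon,\varepsilon_0)}Q(x)\,\psi^{\,\alpha}(d(x,b))\,d\mu(x)=o\bigl(I^{\alpha}(\varepsilon,\varepsilon_0)\bigr)\quad\text{as }\varepsilon\rightarrow 0.$$
(Note $\psi>0$ on $(0,\varepsilon_0)$, so the positivity requirement $\psi:(0,\varepsilon_0)\rightarrow(0,\infty)$ in Lemma~\ref{lem1A} is met.) This is precisely \eqref{eq7***} and \eqref{eq5***}. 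Next, I would check that all remaining hypotheses of Lemma~\ref{lem1A} are literally those assumed in Theorem~\ref{th3}: $D$ is a domain in a locally ... metric space with $\overline D$ compact and finite Hausdorff dimension $\alpha\geqslant 2$; $X^{\,\prime}$ has finite Hausdorff dimension $\alpha^{\,\prime}\geqslant 2$ (an Ahlfors $\alpha^{\,\prime}$-regular space does); $f$ is open discrete ring $Q$-mapping at $b\in\partial D$ with $f(D)=D^{\,\prime}$; $D$ is locally linearly connected at $b$; $C(f,\partial D)\subset\partial D^{\,\prime}$; $D^{\,\prime}$ is strongly accessible at some $y\in C(f,b)$; and $f$ satisfies condition $\textbf{A}$. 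Applying Lemma~\ref{lem1A} then gives $C(f,b)=\{y\}$, which is the assertion.

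I do not expect a genuine obstacle here, since this is a combination of a cited estimate (Proposition~\ref{pr3} / \cite[Corollary~4.1]{RS}) with an already-proved lemma; the ``work'' is purely bookkeeping — verifying that the growth hypothesis \eqref{eq7} plus the $\limsup$ condition genuinely yield the $o\bigl(I^\alpha\bigr)$ decay in the $\alpha$-regular case, and that the positivity of $\psi$ and the finiteness/positivity of $I(\varepsilon,\varepsilon_0)$ are in force. The one point deserving a word of care is that Lemma~\ref{lem1A} requires $\psi$ strictly positive and $I$ finite and positive on an interval $(0,\varepsilon_1)$ for each $\varepsilon_2$; with the explicit $\psi(t)=\frac{1}{t\log(1/t)}$ this is immediate for $\varepsilon_0$ small enough that $\log(1/t)>0$ on $(0,\varepsilon_0)$, so after shrinking $\varepsilon_0$ if necessary everything goes through. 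Thus the proof is: shrink $\varepsilon_0$, apply \cite[Corollary~4.1]{RS} (resp.\ Proposition~\ref{pr3}) to get \eqref{eq7***}--\eqref{eq5***}, and invoke Lemma~\ref{lem1A}.~$\Box$
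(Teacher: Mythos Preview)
Your proposal is correct and follows essentially the same route as the paper: the paper states that Theorem~\ref{th3} (like Theorem~\ref{th2}) is obtained from Lemma~\ref{lem1A} together with Proposition~\ref{pr3} and \cite[Corollary~4.1]{RS}, which is precisely what you do --- invoke \cite[Corollary~4.1]{RS} to produce $\psi(t)=\frac{1}{t\log(1/t)}$ satisfying \eqref{eq7***}--\eqref{eq5***}, then apply Lemma~\ref{lem1A}. One incidental remark: your parenthetical ``(an Ahlfors $\alpha^{\,\prime}$-regular space does)'' is unnecessary, since Theorem~\ref{th3} already assumes $X^{\,\prime}$ has finite Hausdorff dimension directly rather than via Ahlfors regularity.
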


\section{Examples and open problems}

{\bf Example 1.} Now, let us to show that, the $FMO$ condition can
not be replaced by a weaker requirement $Q\in L^p,$ $p\geqslant 1,$
in Theorem \ref{th4} (see \cite[Proposition~6.3]{MRSY}). For
simplicity, we consider a case $X=X^{\,\prime}={\Bbb R}^n.$

\begin{theorem}\label{theor14*!}
{\sl\, Given $p>1,$ there exists $Q\in L^p({\Bbb B}^n),$ $n\ge 2,$
and bounded ring $Q$-homeomorphism $f:{\Bbb
B}^n\setminus\{0\}\rightarrow {\Bbb R}^n$ at $0,$ for which $x_0=0$
is essential singularity.}
\end{theorem}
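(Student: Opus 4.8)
The plan is to exhibit an explicit radial stretching. Fix $p>1$ and $n\geqslant 2$, choose an exponent $\beta$ with $0<\beta<\frac{n}{(n-1)p}$, put $\rho(t)=1+t^{\beta}$ for $t\in(0,1]$, and define
$$f\colon {\Bbb B}^n\setminus\{0\}\rightarrow {\Bbb R}^n,\qquad f(x)=\rho(|x|)\,\frac{x}{|x|}=(1+|x|^{\beta})\,\frac{x}{|x|}\,.$$
In polar coordinates $f$ is the map $(t,\omega)\mapsto(\rho(t),\omega)$, carrying each sphere $S(0,t)$ onto $S(0,\rho(t))$ by a homothety; since $\rho$ is a strictly increasing $C^{1}$ bijection of $(0,1]$ onto $(1,2]$, the map $f$ is a homeomorphism of ${\Bbb B}^n\setminus\{0\}$ onto the bounded spherical ring $\{w\in{\Bbb R}^n:1<|w|<2\}$. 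Since $f\bigl(B(0,\varepsilon)\setminus\{0\}\bigr)=\{w:1<|w|<1+\varepsilon^{\beta}\}$ for every $\varepsilon\in(0,1)$, the cluster set is $C(f,0)=\{w:|w|=1\}$, a nondegenerate continuum; hence $f$ has no limit, finite or infinite, at $0$, so $x_0=0$ is an essential singularity of $f$.

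The main step is to verify that $f$ is a ring $Q$-mapping at $0$ in the sense of $(\ref{eq1})$ (with $\alpha=\alpha^{\,\prime}=n$ and Lebesgue measure) for
$$Q(x)=\left(\frac{\rho(|x|)}{|x|\,\rho^{\,\prime}(|x|)}\right)^{\!n-1}=\left(\frac{1+|x|^{\beta}}{\beta\,|x|^{\beta}}\right)^{\!n-1}\,.$$
It is enough to check $(\ref{eq1})$ for rings $A=A(0,r_1,r_2)$ with $0<r_1<r_2<1$, the case $r_2\geqslant 1$ being analogous (one uses $\rho(1)=2$ in place of $\rho(r_2)$, and the left-hand side vanishes once $r_1\geqslant 1$). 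Because $f$ is a homeomorphism carrying the round ring $A(0,r_1,r_2)$ onto the round ring $\{\rho(r_1)<|w|<\rho(r_2)\}$, the family $f(\Gamma(S_1,S_2,A))$ is exactly the family of all curves joining the two boundary spheres of the image ring, so by the classical value of the ring modulus,
$$M_n\bigl(f(\Gamma(S_1,S_2,A))\bigr)=\omega_{n-1}\bigl(\log(\rho(r_2)/\rho(r_1))\bigr)^{1-n}\,,$$
where $\omega_{n-1}$ is the surface area of $S^{n-1}$ (see e.g. \cite{Va$_1$}). On the other hand, for any Lebesgue measurable $\eta\colon(r_1,r_2)\rightarrow[0,\infty]$ with $\int_{r_1}^{r_2}\eta(t)\,dt\geqslant 1$, Hölder's inequality with exponents $n$ and $\frac{n}{n-1}$ gives
$$1\leqslant\int_{r_1}^{r_2}\eta(t)\,dt\leqslant\left(\int_{r_1}^{r_2}Q(t)\,\eta^{n}(t)\,t^{n-1}\,dt\right)^{\!1/n}\left(\int_{r_1}^{r_2}\bigl(Q(t)\,t^{n-1}\bigr)^{-1/(n-1)}\,dt\right)^{\!(n-1)/n}\,,$$
and since $\bigl(Q(t)\,t^{n-1}\bigr)^{-1/(n-1)}=\rho^{\,\prime}(t)/\rho(t)$ the second factor equals $\bigl(\log(\rho(r_2)/\rho(r_1))\bigr)^{(n-1)/n}$, so that $\int_{r_1}^{r_2}Q(t)\,\eta^{n}(t)\,t^{n-1}\,dt\geqslant\bigl(\log(\rho(r_2)/\rho(r_1))\bigr)^{1-n}$. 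Multiplying by $\omega_{n-1}$ and passing to polar coordinates yields $M_n\bigl(f(\Gamma(S_1,S_2,A))\bigr)\leqslant\int_{A}Q(x)\,\eta^{n}(|x|)\,dx$, which is $(\ref{eq1})$.

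It remains to check the integrability of $Q$. Since $\rho(|x|)\leqslant 2$ and $\rho^{\,\prime}(t)=\beta t^{\beta-1}$, one has $Q(x)\leqslant(2/\beta)^{n-1}|x|^{-\beta(n-1)}$ on ${\Bbb B}^n$, whence
$$\int_{{\Bbb B}^n}Q^{p}(x)\,dx\leqslant(2/\beta)^{(n-1)p}\,\omega_{n-1}\int_{0}^{1}t^{\,n-1-\beta(n-1)p}\,dt<\infty$$
because $\beta(n-1)p<n$ by the choice of $\beta$; thus $Q\in L^{p}({\Bbb B}^n)$. This produces the required bounded ring $Q$-homeomorphism with an essential singularity at $0$. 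The one genuinely delicate point is the identification of $M_n\bigl(f(\Gamma(S_1,S_2,A))\bigr)$ with the classical ring modulus of the image annulus --- this uses the injectivity of $f$ in an essential way --- together with the fact that the above $Q$ is exactly the one turning the Hölder estimate into an equality; the boundedness, the essentiality of the singularity and the $L^{p}$-bound are routine.
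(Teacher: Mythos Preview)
Your proposal is correct and uses the same radial stretching $f(x)=(1+|x|^{\beta})\,x/|x|$ with $0<\beta<n/((n-1)p)$ and the same weight $Q(x)=\bigl((1+|x|^{\beta})/(\beta|x|^{\beta})\bigr)^{n-1}$ as the paper, together with the identical $L^{p}$ estimate. The only difference is in how the ring $Q$-mapping inequality is established: the paper observes that $f\in W^{1,n}_{\rm loc}$, computes the inner dilatation $K_I(x,f)=Q(x)$ from the principal stretchings $\lambda_{\tau}=\rho(t)/t$ and $\lambda_{r}=\rho^{\,\prime}(t)$, and then invokes \cite[Theorem~8.6]{MRSY}, whereas you verify~\eqref{eq1} directly from the classical modulus of the image annulus and a H\"older estimate tailored to this $Q$. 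Your route is more self-contained for this specific example; the paper's is shorter but relies on the general black box that Sobolev homeomorphisms are ring $K_I$-homeomorphisms.
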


\medskip
\begin{proof} Set
$$f(x)=\frac{1+|x|^{\alpha}}{|x|}\cdot x\,,$$
where $\alpha\in (0, n/p(n-1)).$ Without loss of generality, we can
consider that $\alpha<1.$ Observe that, $f$ maps ${\Bbb
B}^n\setminus\{0\}$ onto $\{1<|y|<2\}$ in ${\Bbb R}^n,$ and $C(0,
f)= {\Bbb S}^{n-1}.$ Thus, $x_0=0$ is essential singularity.

Now, we show that $f$ is a ring $Q$-homeomorphism at 0 and some
$Q\in L^p({\Bbb B}^n).$ Note that, $f$ is a homeomorphism in ${\Bbb
B}^n\setminus \{0\},$ and $f\in C^1\left({\Bbb B}^n\setminus
\{0\}\right).$ Now $f\in W_{loc}^{1, n}\left({\Bbb B}^n\setminus
\{0\}\right).$ Set
\begin{equation}\label{eq12C}
J(x, f)={\rm det}\, f^{\,\prime}(x),\quad
l\left(f^{\,\prime}(x)\right)\,=\,\,\,\min\limits_{h\in {\Bbb R}^n
\setminus \{0\}} \frac {|f^{\,\prime}(x)h|}{|h|}
\end{equation}
and
\begin{equation}\label{eq1.1.1}
K_{I}(x,f)\quad =\quad\left\{
\begin{array}{rr}
\frac{|J(x,f)|}{{l\left(f^{\,\prime}(x)\right)}^n}, & J(x,f)\ne 0,\\
1,  &  f^{\,\prime}(x)=0, \\
\infty, & \text{otherwise}
\end{array}
\right.\,.
\end{equation}
Then there exist systems of vectors $e_1,\ldots, e_n$ and
$\widetilde{e_1},\ldots,\widetilde{e_n},$ and nonnegative numbers
$\lambda_1(x_0),\ldots,\lambda_n(x_0),$
$\lambda_1(x_0)\leqslant\ldots\leqslant\lambda_n(x_0),$ such that
$f^{\,\prime}(x_0)e_i=\lambda_i(x_0)\widetilde{e_i}$
(see.~\cite[4.1.I]{Re}), and
$$|J(x_0, f)|=\lambda_1(x_0)\ldots\lambda_n(x_0),\quad
l(f^{\,\prime}(x_0)) =\lambda_1(x_0)\,, $$
%
$$K_I(x_0, f)=\frac{\lambda_1(x_0)\ldots\lambda_n(x_0)}{\lambda^n_1(x_0)}\,.$$
%
Since $f$ has a type $f(x)=\frac{x}{|x|}\rho(|x|),$ it is not
difficult to show that, the ''main vectors'' $e_{i_1},\ldots,
e_{i_n}$ and $\widetilde{e_{i_1}},\ldots, \widetilde{e_{i_n}}$ are
$(n-1)$ linearly independent tangent vectors to $S(0, r)$ at $x_0,$
where $|x_0|=r,$ and one radial vector, which is orthogonal to them.
We also can show that, in this case, the corresponding
''stretchings'', denoted as $\lambda_{\tau}(x_0)$ and $\lambda_r,$
are
$\lambda_{\tau}(x_0):=\lambda_{i_1}(x_0)=\ldots=\lambda_{i_{n-1}}(x_0)=\frac{\rho(r)}{r}$
и $\lambda_{r}(x_0):=\lambda_{i_n}=\rho^{\,\prime}(r),$
correspondingly. From other hand, it is known that $f$ is a ring
$Q$-homeomorphism at $x_0=0$ under $Q=K_I(x, f)$ (see
\cite[Theorem~8.6]{MRSY}).

Given $e\in {\Bbb S}^{n-1},$ observe that, $\frac{\partial
f}{\partial e}(x_0)=\lim\limits_{t\rightarrow
+0}\frac{f(x_0+te)-f(x_0)}{t}=\frac{\partial f}{\partial
e}(x_0)=f^{\,\prime}(x_0)e$ whenever $x_0$ is differentiability
point of $f.$ Let $\lambda_{\tau}(x_0)$ is a stretching,
corresponding to a tangent direction at $x_0\in {\Bbb
B}^n\setminus\{0\},$ and $\lambda_{r}(x_0)$ is a stretching,
corresponding to a radial direction at $x_0.$ Now
$$\lambda_{\tau}(x_0)=(1+|x_0|^{\alpha})/|x_0|\,,\qquad \lambda_{r}(x_0)=\alpha|x_0|^{\alpha-1}\,.$$
Since
$\lambda_{\tau}(x_0)\ge \lambda_{r}(x_0),$ we obtain that
$l(f^{\,\prime}(x_0))=\lambda_{r}(x_0).$ By (\ref{eq1A}), we have
that
\begin{equation}\label{eq2.7.2}
Q(x):=K_I(x_0,
f)=\left(\frac{1}{\alpha}\right)^{n-1}\cdot\frac{(1+|x_0|^{\alpha})^{n-1}}
{|x_0|^{\alpha(n-1)}}\,.
\end{equation}
For $r<1,$
$$Q(x)\le \frac{C}{|x|^{\alpha(n-1)}}\,,\quad C:=\left(\frac{2}{\alpha}\right)^{n-1}\,.$$
Thus, we obtain that $$\int\limits_{{\Bbb B}^n}\left(Q(x)\right)^p
dm(x)\le C^p \int\limits_{{\Bbb
B}^n}\frac{dm(x)}{|x|^{p\alpha(n-1)}}=$$
\begin{equation}\label{eq2.3A}=C^p\int\limits_0^1\int\limits_{S(0,
r)}\frac{d{\mathcal{A}}}{|x|^{p\alpha(n-1)}}dr=\omega_{n-1}C^p
\int\limits_0^1\frac{dr}{r^{(n-1)(p\alpha-1)}}\,.\end{equation}
Since $I:=\int\limits_0^1\frac{dr}{r^{\beta}}$ is convergent at
$\beta<1,$ the integral in right-hand side of (\ref{eq2.3A}) is
convergent, because $\beta:=(n-1)(p\alpha-1)$ satisfies $\beta<1$ at
$\alpha\in (0, n/p(n-1)).$

Now, $Q(x)\in L^p({\Bbb B}^n).$~$\Box$
\end{proof}

\medskip
{\bf Example 2.} Now we show that the $FMO$ condition can not be
replaced by a weaker requirement $Q\in L^p,$ $p\geqslant 1,$ in
Theorems \ref{theor4*!} and \ref{th1}. We consider the case
$X=X^{\,\prime}={\Bbb R}^n,$ also.

\medskip
Set $D:={\Bbb B}^n\setminus \{0\}\subset {\Bbb R}^n,$
$D^{\,\prime}:=B(0, 2)\setminus\{0\} \subset {\Bbb R}^n.$ Denote
$\frak{A}_{Q}$ a family of all ring $Q$-homeomorphisms $g:{\Bbb
B}^n\setminus \{0\}\rightarrow {\Bbb R}^n$ at $0.$ The following
statement holds.

\medskip
\begin{theorem}\label{th3.10.1}{\sl\, Given $p\ge 1,$ there exist
$Q:{\Bbb B}^n\rightarrow [1, \infty],$ $Q(x)\in L^p({\Bbb B}^n)$ and
$g_m\in \frak{A}_{Q}$ for which $g_m$ has a continuous extension to
$x_0=0,$ however, $\left\{g_m(x)\right\}_{m=1}^{\infty}$ is not
equicontinuous at $x_0=0.$}
\end{theorem}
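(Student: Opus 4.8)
The plan is to exhibit, for each $p\ge 1$, an explicit one‑parameter family of radial homeomorphisms built from the model map of Example~1 (the proof of Theorem~\ref{theor14*!}), truncated at a scale $s_m\downarrow 0$ so that the singularity at $0$ becomes removable for every individual $g_m$ while the whole family still spreads a punctured neighbourhood of $0$ over a set of fixed size.

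Concretely, fix $\alpha\in\bigl(0,\tfrac{n}{p(n-1)}\bigr)$ with $\alpha<1$, put $s_m:=2^{-m}$ and define $g_m\colon{\Bbb B}^n\setminus\{0\}\to{\Bbb R}^n$ by
$$g_m(x)=\left\{\begin{array}{ll}\dfrac{x}{s_m}, & 0<|x|\le s_m,\\[3mm]\dfrac{x}{|x|}\,\bigl(1+|x|^{\alpha}-s_m^{\alpha}\bigr), & s_m\le|x|<1.\end{array}\right.$$
First I would record the elementary facts: the two branches agree on $|x|=s_m$ (both give $|g_m|=1$), $|g_m(x)|$ is strictly increasing in $|x|$, so $g_m$ is a homeomorphism of ${\Bbb B}^n\setminus\{0\}$ onto $B(0,2-s_m^{\alpha})\setminus\{0\}\subset B(0,2)\setminus\{0\}=D^{\,\prime}$; moreover $|g_m(x)|=|x|/s_m\to 0$ as $x\to 0$, hence each $g_m$ extends continuously to $x_0=0$ by $g_m(0)=0$. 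Non‑equicontinuity at $0$ is then immediate: given $\delta>0$, choose $m$ with $s_m<\delta$ and any $x$ with $|x|=s_m$; then $|x|<\delta$ but $d^{\,\prime}(g_m(x),g_m(0))=|g_m(x)|=1$, so $\{g_m\}$ cannot be equicontinuous at $0$.

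Next I would construct the common weight $Q$. Each $g_m$ is piecewise $C^1$ with a Lipschitz gluing across the sphere $|x|=s_m$, so $g_m\in W^{1,n}_{loc}({\Bbb B}^n\setminus\{0\})$ and $J(x,g_m)\ne 0$ a.e.; by \cite[Theorem~8.6]{MRSY} (used exactly as in Example~1) $g_m$ is then a ring $K_I(\cdot,g_m)$‑homeomorphism at $0$. Computing the stretchings as in Example~1, with $\rho_m(t)=1+t^{\alpha}-s_m^{\alpha}$ one gets $\lambda_\tau=\rho_m(|x|)/|x|$ and $\lambda_r=\rho_m^{\,\prime}(|x|)=\alpha|x|^{\alpha-1}$ on $\{s_m<|x|<1\}$, where $\lambda_\tau\ge\lambda_r$, so
$$K_I(x,g_m)=\Bigl(\tfrac{\lambda_\tau}{\lambda_r}\Bigr)^{n-1}=\left(\frac{1+|x|^{\alpha}-s_m^{\alpha}}{\alpha\,|x|^{\alpha}}\right)^{n-1}\le\left(\frac{2}{\alpha}\right)^{n-1}\frac{1}{|x|^{\alpha(n-1)}},$$
while on $\{|x|<s_m\}$ the map is a homothety and $K_I(x,g_m)=1$. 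Hence, with $Q(x):=1+\bigl(\tfrac{2}{\alpha}\bigr)^{n-1}|x|^{-\alpha(n-1)}$ (and $Q(0)=\infty$) one has $\sup_m K_I(x,g_m)\le Q(x)$ and $Q\ge 1$; since the right‑hand side of \eqref{eq1} is nondecreasing in the weight, every $g_m$ satisfies \eqref{eq1} with $Q$, i.e. $g_m\in\frak{A}_Q$. Finally $Q\in L^p({\Bbb B}^n)$: this is the integrability computation \eqref{eq2.3A} of Example~1, valid because $p\alpha(n-1)<n$ by the choice of $\alpha$.

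I expect the only non‑routine point to be the justification that each $g_m$ genuinely satisfies the ring‑modulus inequality \eqref{eq1} at $0$ with weight $K_I(\cdot,g_m)$ — i.e. the invocation of \cite[Theorem~8.6]{MRSY} — together with a clean description of $K_I$ across the gluing sphere $|x|=s_m$ (a set of measure zero, hence harmless for the modulus estimate, but worth stating). Everything else is a direct verification. I would close with a remark that, after replacing $g_m$ by $\tfrac13 g_m$ so that the images lie in a fixed ball $B_R$ and omit a fixed continuum $K$ of diameter bounded below, the family $\{g_m\}$ meets every hypothesis of Theorems~\ref{theor4*!} and~\ref{th1} except $Q\in FMO(0)$ — indeed $Q$ has a genuine power singularity at $0$ and so is not of finite mean oscillation there — which is precisely why the example shows the $FMO$ assumption in those theorems cannot be relaxed to $Q\in L^p$ for any $p\ge 1$.
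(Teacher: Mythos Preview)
Your proposal is correct and follows essentially the same approach as the paper: a radial truncation of the Example~1 map at a scale $s_m\downarrow 0$, making the inner piece a homothety so that $0$ becomes removable for each $g_m$ while the sphere $\{|x|=s_m\}$ is still sent to a set of radius~$\ge 1$. The only cosmetic differences are that the paper uses $s_m=1/m$ and keeps the outer branch exactly as $\frac{1+|x|^{\alpha}}{|x|}x$ (gluing at radius $1+(1/m)^{\alpha}$ rather than $1$) and takes $Q(x)=\bigl(\tfrac{1+|x|^{\alpha}}{\alpha|x|^{\alpha}}\bigr)^{n-1}$ itself rather than your convenient upper bound --- none of which changes the argument.
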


\medskip
\begin{proof} Given $p\ge 1$ and $\alpha\in (0, n/p(n-1)),$
$\alpha<1,$ we define $g_m: {\Bbb B}^n\setminus\{0\}\rightarrow
{\Bbb R}^n$ as
$$ g_m(x)\,=\,\left
\{\begin{array}{rr} \frac{1+|x|^{\alpha}}{|x|}\cdot x\,, & 1/m\le|x|\le 1, \\
\frac{1+(1/m)^{\alpha}}{(1/m)}\cdot x\,, & 0<|x|< 1/m \ .
\end{array}\right.
$$
Observe that, $g_m$ maps $D={\Bbb B}^n\setminus \{0\}$ onto
$D^{\,\prime}=B(0,2)\setminus\{0\},$ and that $x_0=0$ is removable
singularity for $g_m,$ $m\in {\Bbb N}.$ Moreover,
$\lim\limits_{x\rightarrow 0}g_m(x)=0,$ and $g_m$ is a constant as
$|x|\ge 1/m.$  In fact, $g_m(x)\equiv g(x)$ for $x:\
\frac{1}{m}<|x|< 1,$ $m=1,2\ldots\,,$ where
$g(x)=\frac{1+|x|^{\alpha}}{|x|}\cdot x.$

Observe $g_m\in ACL({\Bbb B}^n).$ In fact,
$g_m^{(1)}(x)=\frac{1+(1/m)^{\alpha}}{(1/m)}\cdot x,$
$m=1,2,\ldots,$ belongs to $C^1$ in $B(0, 1/m+\varepsilon)$ at
sufficiently small $\varepsilon>0.$ From other hand,
$g_m^{(2)}(x)=\frac{1+|x|^{\alpha}}{|x|}\cdot x$ are $C^1$-mappings
in $$A(1/m-\varepsilon, 1, 0)=\left\{x\in {\Bbb R}^n:
1/m-\varepsilon<|x|<1\right\}$$ at small $\varepsilon>0.$ Thus $g_m$
are lipschitzian in ${\Bbb B}^n$ and, consequently, $g_m\in
ACL({\Bbb B}^n)$ (see, e.g., \cite[sect.~5, p.~12]{Va$_1$}). As
above, we obtain
$$K_I(x, g_m)=\left
\{\begin{array}{rr} \left(\frac{1+|x|^{\,\alpha}}{\alpha
|x|^{\,\alpha}}\right)^{n-1}\,, & 1/m\le|x|\le 1, \\
1\,,\qquad & 0<|x|< 1/m\,.
\end{array}\right.
$$
Observe that $K_I(x, g_m)\le c_m$ for every $m\in {\Bbb N}$ and some
constant. Now, $g_m\in W_{loc}^{1, n}({\Bbb B}^n)$ and $g_m^{-1}\in
W_{loc}^{1, n}(B(0, 2)),$ because $g_m$ and $g_m^{-1}$ are
quasiconformal (see, e.g., \cite[Corollary~13.3 and
Theorem~34.6]{Va$_1$}). By \cite[Theorem~8.6]{MRSY}, $g_m$ are ring
$Q$-homeomorphisms in $D={\Bbb B}^n\setminus \{0\}$ at $Q=Q_m(x):=
K_I(x, g_m).$
Moreover, $g_m$ are $Q$-homeomorphisms with
$Q=\left(\frac{1+|x|^{\,\alpha}}{\alpha
|x|^{\,\alpha}}\right)^{n-1}.$ Since $\alpha p(n-1)<n,$ we have
$Q\in L^p({\Bbb B}^n),$ see proof of the theorem \ref{theor14*!}.
From another hand, we have that
\begin{equation}\label{eq2!!!!!}
\lim\limits_{x\rightarrow 0} |g(x)|= 1\,,
\end{equation}
and $g$ maps ${\Bbb B}^n\setminus\{ 0\}$ onto $1<|y|< 2.$ By
(\ref{eq2!!!!!}), we obtain that
$$|g_m(x)|=|g(x)|\ge 1\qquad\qquad\forall\quad x:|x|\ge 1/m,\quad m=1,2,\ldots\,,$$
i.e. $\{g_m\}_{m=1}^{\infty}$ is not equicontinuous a the origin.
\end{proof}$\Box$


\medskip
{\bf Open problem 1.} If $X=X^{\,\prime}={\Bbb R}^n,$ for all
$\beta: [a,\,b)\rightarrow X^{\,\prime}$ and $x\in
f^{\,-1}\left(\beta(a)\right),$ an open discrete mapping $f$ has a
maximal $f$-lif\-ting starting at $x.$ {\it To describe properties
of the metric spaces  $(X, d, \mu)$ and $(X^{\,\prime},
d^{\,\prime}, \mu^{\,\prime}),$ for which, for every curve $\beta:
[a,\,b)\rightarrow X^{\,\prime}$ and $x\in
f^{\,-1}\left(\beta(a)\right),$ there exists a maximal $f$-lif\-ting
starting at $x$ under every open discrete mapping $f:X\rightarrow
X^{\,\prime}.$}

\medskip
{\bf Open problem 2.} We say that the path connected space $(X, d,
\mu)$ is weakly flat at a point $x_0\in X$ if, for every
neighborhood $U$ of the point $x_0$ and every number $P>0,$ there is
a neighborhood $V\subseteq U$ of $x_0$ such that
$M_{\alpha}(\Gamma(E, F, X))\geqslant P$ for any continua  $E$ and
$F$ in $X$ intersecting $\partial V$ and $\partial U$. We say that a
space $(X,d,\mu)$ is weakly flat, if it is weakly flat at every
point. {\it To find relationship between weakly flat spaces and
spaces, which are Ahlfors $\alpha$-regular and support
$(1;\alpha)$-Poincare inequality. }

{\bf \noindent Evgeny Sevost'yanov} \\
Zhitomir Ivan Franko State University,  \\
40 Bol'shaya Berdichevskaya Str., 10 008  Zhitomir, UKRAINE \\
Phone: +38 -- (066) -- 959 50 34, \\
Email: esevostyanov2009@mail.ru
\end{document}